\newtheorem{theorem}{Theorem}[section]
\newtheorem{lemma}{Lemma}[section] %[theorem]
\newtheorem{corollary}{Corollary}[section] %[theorem]
\newtheorem{proposition}{Proposition}[section] %[theorem]
\newtheorem{assumption}{Assumption} %[theorem]
\numberwithin{equation}{section}
\numberwithin{equation}{section}
\numberwithin{figure}{section}
\DeclareRobustCommand{\rchi}{{\mathpalette\irchi\relax}}
\newcommand{\irchi}[2]{\raisebox{\depth}{$#1\chi$}}
\newcommand\obullet[1]{\ThisStyle{\ensurestackMath{%
  \stackon[1pt]{\SavedStyle#1}{\SavedStyle\kern.6\LMpt\bullet}}}}
\newcommand\ocirc[1]{\ThisStyle{\ensurestackMath{%
  \stackon[1pt]{\SavedStyle#1}{\SavedStyle\kern.6\LMpt\circ}}}}
\title{Time-Dependent Acoustic Waves Generated by\\  Multiple Resonant Bubbles:\\ Application to Acoustic Cavitation}
\author{Arpan Mukherjee\footnote{Radon Institute (RICAM), Austrian Academy of
Sciences, Altenbergerstrasse 69, A-4040, Linz, Austria (arpan.mukherjee@oeaw.ac.at). This author is supported by the Austrian Science Fund (FWF): P32660.} \ and Mourad Sini\footnote{Radon Institute (RICAM), Austrian Academy of
Sciences, Altenbergerstrasse 69, A-4040, Linz, Austria (mourad.sini@oeaw.ac.at). This author is partially supported by the Austrian Science Fund (FWF): P32660.}}
\begin{document}
\maketitle
\begin{abstract}

We analyse the ultrasound waves reflected by multiple bubbles in the linearized time-dependent acoustic model. The generated time-dependent wave field is estimated close to the bubbles. The motivation of this study comes from the therapy modality using acoustic cavitation generated by injected bubbles into the region of interest. The goal is to create enough, but not too much, pressure in the region of interest to eradicate anomalies in that region. In a previous work, we already showed that, in case of single bubble, the dominant part of the acoustic pressure near it splits into two main echos. The primary one is the incident field shifted and amplified at certain order. The secondary one is of periodic form which is related to the resonant frequency (i.e. the Minnaert one) created by the single bubble. This secondary wave can be amplified at will, at certain specific times, by tuning properly the material characteristics of the used bubble.

\noindent
Here, we derive the dominant part of the generated acoustic field by a cluster of bubbles taking into account the (high) contrasts of their mass density and bulk as well as their general distribution in the given region. 

\noindent
As consequences of these approximations, we highlight the following features:
\begin{enumerate}
\item If we use dimers (two close bubbles), or generally polymers, then  both the primary and the secondary waves can be amplified resulting in a remarkable enhancement of the whole echo in the whole time. The main reason for that is the closeness of the bubbles which translates the fact that the polymers resonate (even if each bubble don't). This feature is shown also when we use a set of separated polymers. Therefore, one can generate desired amount of pressure by injecting such a set of polymers. 

\item If we distribute the bubbles every where in the region of interest, in a periodic way for instance, then we can derive the effective acoustic model which turns out to be a dispersive one (due to the resonant behavior of the bubbles). Therefore, the original question of generating desired acoustic pressure can be related to the effective model. We show that, for a given desired pressure, we can tune the effective model to generate it.

\end{enumerate}
\end{abstract}

\maketitle
\section{Introduction and Statement of the Results}
\subsection{Introduction}

The propagation of ultrasound waves in a fluid containing spherically-shaped bubbles is described, through the evolution of their radii in time, by the Rayleigh-Plesset mathematical model, or the Keller-Miksis model, see \cite{Brennen, Y-2018}. A more general model is given in \cite{C-M-P-T:1985, C-M-P-T:1986} which allows for general shaped bubbles under different scales and distributions. Those models are non-linear in nature. Our motivation in studying such models is related to the so-called acoustic cavitation, see \cite{Y-2018}, which initially meant to describe the growth and eventually collapse of the bubbles when they are subject to high acoustic pressure. As it is reported in the literature, this is possible when we use high incident frequencies to generate a high pressure around the bubbles. Such phenomenon is used in many industrial applications as well as in imaging and therapy modalities, see \cite{Stride-Coussios-2010}.

Our work is based on the linearized model derived in \cite{C-M-P-T:1985, C-M-P-T:1986}, which is valid if the shapes of the bubbles are not varying very much. Such assumption might make sense if the used incident ultrasound is not that pronounce. Therefore, we use moderate incident ultrasound. Contrary to the original acoustic cavitation principle for which the goal is to inject very highly incident ultrasound to collapse the bubbles (and then make delivery for instance), our goal is to generate enough pressure, but not too much, around the bubbles. To ensure this, we rely on the resonant character of the used bubbles.

As a first attempt towards this goal, in \cite{AM2} we have considered the case of acoustic wave propagation in the presence of a single bubble. The bubble is assumed to have a small radius (as compared to a typical wavelength, or the interval of time $(0, \mathrm{T})$, or the incident wave, etc).  However, it enjoys contrasting properties of its mass density and bulk modulus (as compare to the ones of the background where the bubble is injected). With these properties, the bubble generates local spots in the time domain. These local spots are time-domain translations of the resonant character of the bubble while excited by time-harmonic incident plane waves at incident frequencies close to the Minnaert resonant frequency. Such a resonant frequency was already observed in the time-harmonic regime, see \cite{A-F-G-L-Z-2018}.\par
The outcome of the analysis in \cite{AM2}, is that the reflected pressure has a dominant part (estimated up to a distance to the bubble of the order its radius) splits into a primary wave, which is the incident wave but shifted and amplified, and a secondary wave, which is the local spot described above. Tuning properly the bubble, we showed that the primary wave can be amplified to a certain extent. However, the secondary wave can have any desired amplitude at certain times. Therefore, at those times, we can expect to generate any desire amount of pressure. This local spot appears as a resonant (i.e. oscillating) field which is high amplitude around the bubble which decays as we get away from it.

In the current work, we aim at extending that approach to the case where we have multiple bubbles. Each bubble has similar size/contrast properties as described above. We derive the dominant part of the generated acoustic field by a cluster of bubbles taking into account the (high) contrasts of their mass density and bulk as well as their general distribution in the given region. As consequences of these approximations, we highlight the following features:
\begin{enumerate}
\item If we use dimers (two close bubbles), or generally polymers, then  both the primary and the secondary waves can be amplified resulting in a remarkable enhancement of the whole echo in the whole time. The main reason for that is the closeness of the bubbles which translates the fact the polymer resonate (even if each bubble don't). This feature is shown also when we use a set of separated polymers. Therefore, one can generate desired amount of pressure by injecting such a set of polymers. 

\item If we distribute the bubbles every where in the region of interests, in a periodic way for instance, then we can derive the effective acoustic model which turns out to be a dispersive one (due to the resonant behavior of the bubbles). Therefore, the original question of generating desired acoustic pressure can be related to the effective model. We show that, for a given desired pressure, we can tune the effective model to generate it. This is possible at the expense of a numerical differentiation.

\end{enumerate}

There is a difference in the two situations above. In the first case, while we use a set of Dimers, the generated pressure can fit to the desired one in the vicinity of the Dimers. Between the Dimers, the error between the desired and generated pressure might depend on the distance between the Dimers. This is not taken into account in our our asymptotic expansion. In the second situation, we do provide the desired pressure every where in the region of interest. However, since we use numerical differentiation, then one needs to face the induced instability (which is known to be a mild instability, of the Holder type). One way to overcome this numerical differentiation step is to look at this issue as an optimal control problem or inverse problem using internal data.

%Acoustic scattering is a formidable tool employed in a myriad of applications, ranging from medical imaging to sonar and non-destructive testing. It facilitates the detection and characterization of objects that would otherwise be imperceptible. One strategy to augment acoustic scattering is by utilizing multiple resonating micro-bubbles, diminutive gas-filled bubbles commonly utilized as contrast agents in medical ultrasound imaging. Under the influence of an acoustic field, the micro-bubbles oscillate, generating a scattering effect that an ultrasound transducer can capture. However, a solitary micro-bubble may not produce a potent enough signal to be discernible. To surmount this constraint, researchers have devised approaches to use multiple resonating micro-bubbles that resonate synchronously, resulting in a more pronounced scattering signal. In comparison to traditional contrast agents, the application of multiple resonating micro-bubbles proffers numerous benefits. For instance, the frequency of the micro-bubble cluster can be finely adjusted to match the acoustic field frequency, thereby amplifying the scattering effect. Furthermore, the deployment of multiple micro-bubbles escalates the scattering cross-section, facilitating the detection of smaller targets.
%\newline

\subsection{The main result}
Let us now closely examine the mathematical model that elucidates the scattering of acoustic waves by $\mathrm{M}$ identical resonating bubbles in a homogeneous medium in $\mathbb{R}^3$:
\begin{align}\label{math-modelmulti}
\begin{cases}   \mathrm{k}_\mathrm{m}^{-1}u_{\mathrm{t}\mathrm{t}}- \text{div} \rho_\mathrm{m}^{-1}\nabla u = 0 & \text{in}\;\mathbb{R}^{3}\setminus \overline{\cup_{i=1}^\mathrm{M}\Omega_i} \times (0,\mathrm{T})\\
\mathrm{k}_\mathrm{c}^{-1}u_{\mathrm{t}\mathrm{t}}- \text{div} \rho_\mathrm{c}^{-1}\nabla u = 0 & \text{in}\;\Omega_i \times (0,\mathrm{T})\; \text{for}\ i=1,2,...,\mathrm{M}\\
 u\big|_{+} = u\big|_{-}  & \text{on} \;\partial\Omega_i\; \text{for}\ i=1,2,...,\mathrm{M}\\
 \rho_{\mathrm{m}}^{-1}\partial_\nu u\big|_{+} = \rho_{\mathrm{c}}^{-1} \partial_\nu u\big|_{-} & \text{on} \;\partial \Omega_i\; \text{for}\ i=1,2,...,\mathrm{M} \\
 u(\mathrm{x},0)= u_\mathrm{t}(\mathrm{x},0) = 0 & \text{for}\; \mathrm{x} \in \mathbb{R}^3.
\end{cases}
\end{align}
The mass density, denoted by $\rho$, is defined as $\rho = \rho_{\mathrm{c}}\rchi_{\Omega_i} + \rho_{m}\rchi_{\mathbb{R}^{3}\setminus\overline{\cup_{i=1}^\mathrm{M}\Omega_i}}$, where $\rho_{\mathrm{c}}$ and $\rho_{m}$ represent the mass density of the bubble and acoustic medium, respectively. Similarly, the bulk modulus of the bubble and acoustic medium, denoted by $\mathrm{k}$, are defined as $\mathrm{k} = \mathrm{k}_{\mathrm{c}}\rchi_{\Omega_i} + \mathrm{k}_{\mathrm{m}}\rchi_{\mathbb{R}^{3}\setminus\overline{\cup_{i=1}^\mathrm{M}\Omega_i}}$ for $i=1,2,...,\mathrm{M}$. Here, $\partial_\nu$ denotes the outward normal vector, and we use the notation $\partial_\nu \big|{\pm}$ to indicate $
\partial_\nu u \big|_{\pm}(\mathrm{x},\mathrm{t}) = \lim_{\mathrm{h}\to 0}\nabla u(\mathrm{x}\pm \mathrm{h}\nu_\mathrm{x},\mathrm{t})\cdot \nu_\mathrm{x},$ where $\nu$ is the outward normal vector to $\partial\Omega$.

Let $u:=u^\textbf{in}+u^\mathrm{s}$ denote the solution to the hyperbolic problem (\ref{math-modelmulti}). The existence and uniqueness of the solution to the direct scattering problem (\ref{math-modelmulti}) have been studied in the literature. The retarded boundary integral equation method has been employed for this purpose in \cite{sayas-transmission}, while the volume integral equation method has been utilized in \cite{AM2}.

Let us now consider the bubbles, which can be represented in the form of $\Omega_i= \delta\mathrm{B}_i+\mathrm{z}_i$ with $i=1,2,\ldots,\mathrm{M}$. Here, $\delta \in \mathbb R^+$, and the small value of $\delta\ll1$ denotes the relative size of $\Omega_i$ compared to the size of $\mathrm{B}_i$. The domains of $\mathrm{B}_i$ are bounded and have $\mathcal{C}^2$-regularity, containing the origin, while the parameter $\mathrm{z}_i\in \mathbb R^3$ indicates the respective locations of the bubbles.
\newline
To simplify the analysis, we make the further assumption that $\rho_{\mathrm{c}},\rho_{m},\mathrm{k}_\mathrm{c},\mathrm{k}_\mathrm{m}$ are positive constants. However, we impose certain scaling properties on the constants $\rho_\mathrm{c}$ and $\mathrm{k}_\mathrm{c}$ in $\Omega_i$ for $i=1,2,...,\mathrm{M}$. Specifically, we assert that
\begin{align}\label{cond-bubblemulti}
\rho_\mathrm{c} = \overline{\rho}_\mathrm{c}\delta^2, \quad \mathrm{k}_\mathrm{c} = \overline{\mathrm{k}}_\mathrm{c}\delta^2 \quad \text{and} \quad \frac{\rho_\mathrm{c}}{\mathrm{k}_\mathrm{c}} \sim 1 \ \text{as}\ \delta\ll1.
\end{align}
This enables us to establish a more streamlined framework for the analysis.
\newline
We define $\mathrm{d}$ as the minimum distance between the bubbles, denoted as $\mathrm{d}_{\mathrm{ij}} = \textbf{dist}(\Omega_i,\Omega\mathrm{j})$ for $i\ne \mathrm{j}$, where $\textbf{dist}$ denotes the distance function. Specifically, we have
\begin{align}
\mathrm{d}:= \min_{1\le i,\mathrm{j}\le \mathrm{M}} \mathrm{d}_{\mathrm{ij}}.
\end{align}
Furthermore, we assume $a$ as the maximum diameter among the resonating micro-bubbles, given by
\begin{align}
a := \max_{1\le i,\mathrm{j}\le \mathrm{M}} \textbf{diam}(\Omega_j)=\delta \max_{1\le i,\mathrm{j}\le \mathrm{M}} \textbf{diam}(\mathrm{B}_j),
\end{align}
where $\textbf{diam}$ represents the diameter function.

In order state the final result, we introduce the grad-harmonic sub-space
\begin{align}\nonumber
    \nabla \mathbb{H}_{\text{arm}} := \Big\{ u \in \big(\mathrm{L}^{2}(\mathrm{B}_i)\big)^3: \exists \  \varphi \ \text{s.t.} \ u = \nabla \varphi,\; \varphi\in \mathrm{H}^1(\mathrm{B}_i)\; \text{and} \ \Delta \varphi = 0 \Big\}
\end{align}
and we define the Magnetization operator as follows:
\begin{align}\nonumber
    \bm{\mathrm{M}}^{(0)}_{\mathrm{B}_i}\big[f\big](\mathrm{x}) := \nabla \int_{\mathrm{B}_i}\mathop{\nabla}\limits_{\mathrm{y}} \frac{1}{4\pi|\mathrm{x}-\mathrm{y}|} \cdot f(\mathrm{y})d\mathrm{y}.
\end{align}
It is well known, see for instance \cite{friedmanI}, that the  Magnetization operator $\mathbb{M}^{(0)}_{\mathrm{B}_i}: \nabla \mathbb{H}_{\text{arm}}\rightarrow \nabla \mathbb{H}_{\text{arm}}$ induces a complete orthonormal basis namely $\big(\lambda^{(3)}_{\mathrm{n}_{\textcolor{black}{i}}},\mathrm{e}^{(3)}_{\mathrm{n}_{\textcolor{black}{i}}}\big)_{\mathrm{n} \in \mathbb{N}}$.
\newline

We hereby present the main result of this work as follows:
\begin{theorem}\label{mainthmulti}
We posit that the incident wave field $u^\textbf{in}$ originating from a point source located at $\mathrm{x}_0\in \mathbb{R}^{3}\setminus\overline{\cup_{i=1}^\mathrm{M}\Omega_i}$ takes the form
\begin{align}\label{incident-wave}
u^\textbf{in}(\mathrm{x},\mathrm{t}, \mathrm{x}_0):=\frac{\lambda(\mathrm{t}-\mathrm{c}_0^{-1}\vert \mathrm{x}-\mathrm{x}_0\vert)}{\vert \mathrm{x}-\mathrm{x}_0\vert},
\end{align}
where $\lambda\in \mathcal{C}^9(\mathbb R)$,\footnote{The reason for considering the necessary degree of time-regularity is discussed in \cite{AM2}.} is a causal signal that vanishes for all $\mathrm{t}<0$. Then, under the condition
\begin{align}\label{inversion-cond}
    \frac{\rho_\mathrm{m}}{4\pi}\textbf{Vol}(\mathrm{B}_j)\;\Big(\frac{\delta}{\mathrm{d}}\Big)^6\; \Big(\frac{1}{\lambda_1^{(3)}}\Big)^2 <1,\; \text{where}\; \lambda_1^{(3)} = \textcolor{black}{\min_i}\max_{\mathrm{n}\in \mathbb N}\ \lambda^{(3)}_{\mathrm{n}_{\textcolor{black}{i}}},\; \textcolor{black}{and}\; \textcolor{black}{\max\limits_{1\le i\le M}\sum\limits_{\substack{j=1 \\ j\neq i}}^M\mathrm{q}_{ij}<d_i,\ i=1,2,\ldots, M,}
\end{align}\vspace{-5pt}
we have the asymptotic expansion
\begin{align}\label{assymptotic-expansion-us}
    u^\mathrm{s}(\mathrm{x},\mathrm{t}) = \sum_{i=1}^\mathrm{M}\frac{\alpha_i \rho_\mathrm{m}}{4\pi} \frac{1}{|\partial\Omega_i|}\int_{\partial\Omega_i}\frac{1}{|\mathrm{x}-\mathrm{y}|}d\sigma_\mathrm{y}\mathrm{Y}_i\big(\mathrm{t}-\mathrm{c}_0^{-1}|\mathrm{x}-\mathrm{z}_i|\big) + \mathcal{O}(\mathrm{M}\bm{\xi}^{-1}\delta^2)\; \text{as}\; \delta\to 0,
\end{align}\vspace{-5pt}
for $(\mathrm{x},\mathrm{t})\in \mathbb R^3\setminus\overline{\cup_{i=1}^\mathrm{M}\Omega_i}\times(0,\mathrm{T})$, where $\bm{\xi}= \max\limits_{1\le i\le \mathrm{M}}\textbf{dist}(\mathrm{x},\mathrm{z}_i),$ and $\bm{\mathrm{Y}}:=\big(\mathrm{Y}_i\big)_{i=1}^\mathrm{M}$ is the vector solution to the following non-homogeneous second-order matrix differential equation with initial conditions:
\begin{align}\label{matrixmulti}
\begin{cases}\displaystyle
    \mathrm{d}_i\frac{\mathrm{d}^2}{\mathrm{d}\mathrm{t}^2}\mathrm{Y}_i(\mathrm{t}) + \mathrm{Y}_i(\mathrm{t}) \textcolor{black}{+} \sum\limits_{\substack{j=1 \\ j\neq i}}^M\mathrm{q}_{ij} \frac{\mathrm{d}^2}{\mathrm{d}\mathrm{t}^2}\mathrm{Y}_j(\mathrm{t}-\mathrm{c}_0^{-1}|\mathrm{z}_i-\mathrm{z}_j|) = \frac{\rho_\mathrm{c}}{\mathrm{k}_\mathrm{c}}\mathrm{c}_0^2\int_{\partial\Omega_i} \partial_\nu u^\textbf{in} \mbox{ in } (0, \mathrm{T}),
     \\ \mathrm{Y}_i(\mathrm{0}) = \frac{\mathrm{d}}{\mathrm{d}\mathrm{t}}\mathrm{Y}_i(\mathrm{0}) = 0,
\end{cases}
\end{align}
where $\mathrm{d}_i$ is defined as $\mathrm{d}_i:= \frac{\rho_\mathrm{m}}{2}\alpha_i\frac{\rho_\mathrm{c}}{\mathrm{k}_\mathrm{c}} \mathrm{A}_{\partial\Omega_i}$, \textcolor{black}{ with $\alpha_i := \rho_\mathrm{c}^{-1}-\rho_\mathrm{m}^{-1}$ is the contrast between the inner and the outer acoustic
coefficient}. Furthermore, $\bm{\mathrm{Q}}=\big(\mathrm{q}_{ij}\big)_{i,j=1}^\mathrm{M}$ is given by
\begin{align}\label{interaction-matrix}
\mathrm{q}_{ij}=\begin{cases}
0 & \text{for}\ i=j,\\
\frac{\mathrm{b}_j}{|\mathrm{z}_i-\mathrm{z}_j|} & \text{for}\ i\ne j,
\end{cases}
\end{align}
where $\mathrm{b}_j$ is defined as $\mathrm{b}_j:= \textcolor{black}{\rho_\mathrm{m}}\alpha_j\delta^3\frac{\rho_\mathrm{c}}{\mathrm{k}_\mathrm{c}}$. We also define $\bm{\mathrm{B}}=\big(\mathrm{B}_i\big)_{i=1}^\mathrm{M}:= \displaystyle\Big(\int_{\partial\Omega_i} \partial_\nu u^\textbf{in}\Big)_{i=1}^\mathrm{M}$, and \\ $\displaystyle\mathrm{A}_{\partial\Omega_i} = \frac{1}{|\partial \Omega_i|}\int_{\partial\Omega_i}\int_{\partial\Omega_i}\frac{(\mathrm{x}-\mathrm{y})\cdot\nu_\mathrm{x}}{|\mathrm{x}-\mathrm{y}|}d\sigma_\mathrm{x}d\sigma_\mathrm{y}.$
\end{theorem}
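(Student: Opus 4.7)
The plan is to reduce \eqref{math-modelmulti} to a system of retarded integral equations involving the fundamental solution $\Phi_0(\mathrm{x},\mathrm{t};\mathrm{y},\mathrm{s}) = \delta(\mathrm{t}-\mathrm{s}-\mathrm{c}_0^{-1}|\mathrm{x}-\mathrm{y}|)/(4\pi|\mathrm{x}-\mathrm{y}|)$ of the background wave operator, extract the dominant contribution from each bubble using the contrast scaling \eqref{cond-bubblemulti}, and then iterate the single-bubble asymptotic analysis of \cite{AM2} with retarded inter-bubble coupling. Writing $u$ inside each $\Omega_i$ via Duhamel against the interior operator $\mathrm{k}_\mathrm{c}^{-1}\partial_\mathrm{t}^2 - \rho_\mathrm{c}^{-1}\Delta$ and matching the traces through the transmission conditions on $\partial\Omega_i$, one obtains a coupled family of retarded boundary integral equations. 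After rescaling $\eta = (\mathrm{y} - \mathrm{z}_i)/\delta$ together with $\rho_\mathrm{c}, \mathrm{k}_\mathrm{c} \sim \delta^2$, the interior contribution reduces to a spectral problem for the Magnetization operator $\mathbb{M}^{(0)}_{\mathrm{B}_i}$; only the grad-harmonic component of the unknowns enters at leading order, and projecting onto the basis $(\lambda^{(3)}_\mathrm{n}, \mathrm{e}^{(3)}_\mathrm{n})_{\mathrm{n}\in\mathbb{N}}$ isolates the principal mode responsible for the Minnaert-type resonance.

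Next, I would handle the inter-bubble coupling. For $\mathrm{y}\in\partial\Omega_j$ and $\mathrm{x}$ outside $\Omega_j$, Taylor-expanding $\Phi_0(\mathrm{x},\mathrm{t};\mathrm{y},\mathrm{s})$ about $\mathrm{y}=\mathrm{z}_j$ together with $|\mathrm{y}-\mathrm{z}_j|\lesssim\delta$ replaces the wave radiated by bubble $j$ by a retarded point source concentrated at $\mathrm{z}_j$ carrying delay $\mathrm{c}_0^{-1}|\mathrm{z}_i-\mathrm{z}_j|$; this produces the entries $\mathrm{q}_{ij}=\mathrm{b}_j/|\mathrm{z}_i-\mathrm{z}_j|$ of $\bm{\mathrm{Q}}$ and the delay structure of \eqref{matrixmulti}. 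Averaging the integral equation over $\partial\Omega_i$ and retaining only the dominant surface-mean yields the scalar unknowns $\mathrm{Y}_i(\mathrm{t})$, whose ODE receives: the diagonal inertia $\mathrm{d}_i\,\ddot{\mathrm{Y}}_i$ from the interior wave operator together with the geometric self-action factor $\mathrm{A}_{\partial\Omega_i}$; the restoring term $\mathrm{Y}_i$ from the single-bubble resonance; the retarded off-diagonal coupling from the neighbouring bubbles; and the forcing $(\rho_\mathrm{c}/\mathrm{k}_\mathrm{c})\mathrm{c}_0^2\int_{\partial\Omega_i}\partial_\nu u^{\textbf{in}}$ from the incident flux. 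Invertibility of the coupled linear system at each time slice is ensured by a Neumann-series contraction whose condition is precisely \eqref{inversion-cond}, balancing the squared off-diagonal weight $(\delta/\mathrm{d})^6$ against $(\lambda_1^{(3)})^{-2}$.

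The main obstacle will be the uniform error control yielding $\mathcal{O}(\mathrm{M}\bm{\xi}^{-1}\delta^2)$. Several error sources must be tracked simultaneously: the Taylor remainders when replacing $\Phi_0(\cdot,\cdot;\mathrm{y},\mathrm{s})$ by its value at $\mathrm{z}_j$, the truncation of the Magnetization-operator spectral expansion beyond the principal eigenfunction, and the accumulation of these errors across the $\mathrm{M}$ bubbles. Each source is individually $\mathcal{O}(\delta^2)$ pointwise, but could conceivably amplify in $\mathrm{M}$; the structural hypothesis \eqref{inversion-cond} supplies the Neumann-type resolvent estimate for $\bm{\mathrm{Y}}$ that tames this amplification, and one then propagates the bound through the outer retarded representation of $u^\mathrm{s}$. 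The $\mathcal{C}^9$ regularity of $\lambda$ is consumed to absorb the many time derivatives produced when the interior wave operator acts on the higher-order corrections, mirroring the role it plays in the single-bubble analysis of \cite{AM2}.
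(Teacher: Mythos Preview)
Your overall strategy---retarded integral representation, Taylor expansion of the kernels about the centres $\mathrm{z}_j$, averaging over $\partial\Omega_i$ to obtain scalar unknowns, and summing errors over the $\mathrm{M}$ bubbles---matches the paper's. Two architectural points, however, differ from how the paper actually proceeds.

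First, the paper does not derive the ODE system \eqref{matrixmulti} by projecting onto the Magnetization eigenbasis. Section~2.3 works directly with the boundary integral equation \eqref{ls1-multi}: it integrates that equation over $\partial\Omega_i$, Taylor-expands the retarded kernels in time, and uses the Newtonian-potential identity $\int_{\partial\Omega_i}\partial_\nu\bm{\mathrm{V}}^{(ii)}[u^{(i)}_{\mathrm{tt}}]=\rho_\mathrm{m}(\mathrm{k}_\mathrm{c}/\rho_\mathrm{c})\int_{\partial\Omega_i}\partial_\nu u^{(i)}+\text{lower order}$ (coming from $\Delta\bm{\mathcal{N}}_{\Omega_i,\mathrm{lap}}=-\mathrm{Id}$ and the interior PDE). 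The geometric constant $\mathrm{A}_{\partial\Omega_i}$ arises from the time-Taylor expansion of $\bm{\mathcal{K}}^{(ii)}$, not from a spectral projection. The Magnetization operator plays no role in this derivation; it appears only in the a~priori estimates of Section~3.

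Second, you misplace the role of condition \eqref{inversion-cond}. It is \emph{not} used to invert the coupled ODE system; solvability of \eqref{matrixmulti} is handled separately (Section~2.4) by showing that the translation-operator matrix $\mathcal{A}$ is bounded invertible on a suitable $L^2$ space and reducing to a first-order abstract Cauchy problem. Condition \eqref{inversion-cond} instead enters in Lemma~\ref{l1}, where it provides the contraction that closes the a~priori bound $\|\partial_\mathrm{t}^n\nabla u^{(i)}\|_{\mathrm{L}^2(\Omega_i)}\lesssim\delta^{5/2}$. This improved gradient estimate then yields $\|\partial_\mathrm{t}^n\partial_\nu u^{(i)}\|_{\mathrm{H}^{-1/2}(\partial\Omega_i)}\sim\delta^2$ (Proposition~\ref{p3}), and it is this boundary-flux estimate that drives every $\mathcal{O}(\mathrm{M}\bm{\xi}^{-1}\delta^2)$ remainder in Section~2.5. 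So the logical chain from \eqref{inversion-cond} to the final error passes through the a~priori field bounds, not through resolvent control of $\bm{\mathrm{Y}}$.
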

\subsection{Discussion about the obtained result}
 
\begin{assumption}\label{assumption}
    In order to simplify the technicalities, we restrict our analysis to the scenario in which the material properties of the micro-bubbles being injected are identical, with bulk modulus ($\mathrm{k}_\mathrm{c}$) and mass density ($\rho_\mathrm{c}$) being uniform for all $i=1,2,\ldots,\mathrm{M}$. 
\end{assumption}

%--------------------------------------------------------------------------------
%--------------------------------------------------------------------------------

\subsubsection{The pressure generated by a Dimer of bubbles, i.e. \texorpdfstring{$M=2$}{M}}

As $\vert \mathrm{z}_1-\mathrm{z}_2 \vert$ is of the order $\delta$, then the system (\ref{matrixmulti}) can be approximately by the following one:

\begin{align}\label{matrixmulti-M-2}
\begin{cases}\displaystyle
    {\bm{\mathrm{A}}}\frac{\mathrm{d}^2}{\mathrm{d}\mathrm{t}^2}\mathrm{Y}_i(\mathrm{t}) + \mathrm{Y}_i(\mathrm{t}) = \frac{\rho_\mathrm{c}}{\mathrm{k}_\mathrm{c}}\mathrm{c}_0^2\ \mathrm{B}_i\mbox{ in } (0, \mathrm{T}), \\ 
     \mathrm{Y}_i(\mathrm{0}) = \frac{\mathrm{d}}{\mathrm{d}\mathrm{t}}\mathrm{Y}_i(\mathrm{0}) = 0,
\end{cases}
\end{align}
where 
\begin{equation}\label{A--M=2}
\bm{\mathrm{A}}:=diag(\mathrm{d}_i)-\bm{\mathrm{Q}}.
\end{equation}

As the matrix $\bm{\mathrm{A}}$ is real and symmetric, it can be diagonalized as $\bm{\mathrm{A}} = \bm{\mathrm{P}}\bm{\mathrm{D}}\bm{\mathrm{P}}^{-1}$, where $\bm{\mathrm{D}} = diag(\lambda_1,\lambda_2)$, 
\begin{align}
\begin{cases}
\bm{\mathrm{D}}\frac{\mathrm{d}^2}{\mathrm{d}\mathrm{t}^2} \bm{\mathrm{P}}^{-1}\bm{\mathrm{Y}}(\mathrm{t}) + \bm{\mathrm{P}}^{-1}\bm{\mathrm{Y}}(\mathrm{t}) = \frac{\rho_\mathrm{c}}{\mathrm{k}_\mathrm{c}}\mathrm{c}_0^2\ \bm{\mathrm{P}}^{-1} {\bm{\mathrm{B}}} \mbox{ in } (0, \mathrm{T}),\\
\bm{\mathrm{P}}^{-1}\bm{\mathrm{Y}}(0) = \frac{\mathrm{d}}{\mathrm{d}\mathrm{t}}\bm{\mathrm{P}}^{-1}\bm{\mathrm{Y}}(0) = 0.
\end{cases}
\end{align}
Simple computations give
\begin{align}
\bm{\mathrm{D}} = \begin{bmatrix}
\mathrm{d}_1 - \mathrm{b}_1\mathrm{q}_{12} & 0 \\
0 & \mathrm{d}_1 + \mathrm{b}_1\mathrm{q}_{12}
\end{bmatrix}, \; \text{as}\ \mathrm{d}_1=\mathrm{d}_2, \mathrm{b}_1=\mathrm{d}_2,\text{and}\; \mathrm{q}_{12}=\mathrm{q}_{21}
\end{align}
and $\bm{\mathrm{P}}:=\big(\mathrm{P}_{ij}\big)_{i,j=1}^\mathrm{M}$ as
\begin{align}
\bm{\mathrm{P}} = \begin{bmatrix}
1 & -1 \\
1 & 1
\end{bmatrix}\;
\text{ with }\;
\bm{\mathrm{P}}^{-1} = \frac{1}{2} \begin{bmatrix}
1 & 1 \\
-1 & 1
\end{bmatrix}.
\end{align}
We proceed by utilizing the change of variable $\bm{\mathrm{Z}}(\mathrm{t}) = \bm{\mathrm{P}}^{-1}\bm{\mathrm{Y}}(\mathrm{t})$ to obtain the ensuing non-homogeneous second-order matrix differential equation, along with initial conditions:
\begin{align}
\begin{cases}
     \bm{\mathrm{D}}\frac{\mathrm{d}^2}{\mathrm{d}\mathrm{t}^2} \bm{\mathrm{Z}}(\mathrm{t}) + \bm{\mathrm{Z}}(\mathrm{t}) = \frac{\rho_\mathrm{c}}{\mathrm{k}_\mathrm{c}}\mathrm{c}_0^2\ \bm{\mathrm{P}}^{-1} \bm{\mathrm{B}}\; \text{in}\ (0,\mathrm{T}),
     \\ \bm{\mathrm{Z}}(0) = \frac{\mathrm{d}}{\mathrm{d}\mathrm{t}}\bm{\mathrm{Z}}(0) = 0.
\end{cases}    
\end{align}
Thus, the solution of the problem can be expressed as:
\begin{align}\label{lamdamulti}
\mathrm{Z}_i(\mathrm{t}) = \lambda_i^{-\frac{1}{2}}\int_0^\mathrm{t}\sin \Big( \lambda_i^{-\frac{1}{2}}(\mathrm{t}-\tau)\Big) \mathrm{g}_{i1}(\tau) d\tau,
\end{align}
where $\lambda_i$ represents the eigenvalues of $\bm{\mathrm{D}}$, and $\displaystyle\mathrm{g}_{11}(\mathrm{t}) := \frac{1}{2}\Big(\int_{\partial\Omega_1} \partial_\nu u^\textbf{in}+\int_{\partial\Omega_2} \partial_\nu u^\textbf{in}\Big) $ and \\ $\displaystyle\mathrm{g}_{21}(\mathrm{t}) := \frac{1}{2}\Big(-\int_{\partial\Omega_1} \partial_\nu u^\textbf{in}+\int_{\partial\Omega_2} \partial_\nu u^\textbf{in}\Big) $ .
\newline
We state the following corollary under the assumptions of Theorem \ref{mainthmulti}:
\begin{corollary} \label{cor1}
As two micro-bubbles are spaced apart by a distance $\bm{\xi}:= |\mathrm{z}_1-\mathrm{z}_2|\sim \delta; \delta\ll 1$, where $\mathrm{z}_1$ and $\mathrm{z}_2$ denote the location points of the micro-bubbles,
\begin{align} \label{deri}
    u^\mathrm{s}(\mathrm{x},\mathrm{t}) = \sum_{i=1}^\mathrm{M}\frac{\alpha_i \rho_\mathrm{m}}{4\pi} \frac{\rho_\mathrm{c}}{\mathrm{k}_\mathrm{c}}\mathrm{c}_0^2\frac{1}{|\partial\Omega_i|}\int_{\partial\Omega_i}\frac{1}{|\mathrm{x}-\mathrm{y}|}d\sigma_\mathrm{y}\mathrm{Y}_i\big(\mathrm{t}-\mathrm{c}_0^{-1}|\mathrm{x}-\mathrm{z}_i|\big) + \mathcal{O}\big(\mathrm{M}\bm{\xi}^{-1}\delta^2\big),\; \text{as}\; \delta \to 0,
\end{align}
where,
\begin{align}
  \mathrm{Y}_1(\mathrm{t}) = \lambda_1^{-\frac{1}{2}}\int_0^\mathrm{t}\sin \Big( \lambda_1^{-\frac{1}{2}}(\mathrm{t}-\tau)\Big) \mathrm{g}_{11}(\tau) d\tau - \lambda_2^{-\frac{1}{2}}\int_0^\mathrm{t}\sin \Big( \lambda_2^{-\frac{1}{2}}(\mathrm{t}-\tau)\Big) \mathrm{g}_{21}(\tau) d\tau,  
\end{align}
and
\begin{align}
  \mathrm{Y}_2(\mathrm{t}) = \lambda_1^{-\frac{1}{2}}\int_0^\mathrm{t}\sin \Big( \lambda_1^{-\frac{1}{2}}(\mathrm{t}-\tau)\Big) \mathrm{g}_{11}(\tau) d\tau + \lambda_2^{-\frac{1}{2}}\int_0^\mathrm{t}\sin \Big( \lambda_2^{-\frac{1}{2}}(\mathrm{t}-\tau)\Big) \mathrm{g}_{21}(\tau) d\tau.  
\end{align}
\end{corollary}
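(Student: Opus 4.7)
The plan is to deduce Corollary \ref{cor1} directly from Theorem \ref{mainthmulti} specialized to $\mathrm{M}=2$, combined with the fact that in the dimer regime $|\mathrm{z}_1-\mathrm{z}_2|\sim\delta$.

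First I would observe that, since $\mathrm{c}_0^{-1}|\mathrm{z}_i-\mathrm{z}_j|=O(\delta)$, a Taylor expansion in time gives
\begin{equation}
\mathrm{Y}_j\big(\mathrm{t}-\mathrm{c}_0^{-1}|\mathrm{z}_i-\mathrm{z}_j|\big) = \mathrm{Y}_j(\mathrm{t}) + O(\delta),
\end{equation}
so the coupled retarded system (\ref{matrixmulti}) reduces at leading order to the instantaneous system (\ref{matrixmulti-M-2}), at the cost of an error that is absorbed into the remainder term $\mathcal{O}(\mathrm{M}\bm{\xi}^{-1}\delta^2)$ already present in (\ref{assymptotic-expansion-us}). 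This is the step I expect to require the most care: one needs uniform-in-$\delta$ control of $\mathrm{Y}_j$ and its time derivatives (via the regularity of $\lambda$ assumed in Theorem \ref{mainthmulti}) to ensure the Taylor tail is truly of the claimed order, and one must check that this perturbation propagates through the Duhamel solution without degrading the remainder.

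Next, under Assumption \ref{assumption} the matrix $\bm{\mathrm{A}}$ in (\ref{A--M=2}) is real symmetric with $\mathrm{d}_1=\mathrm{d}_2$, $\mathrm{b}_1=\mathrm{b}_2$ and $\mathrm{q}_{12}=\mathrm{q}_{21}$. A direct computation gives the eigenvalues $\lambda_1=\mathrm{d}_1-\mathrm{b}_1\mathrm{q}_{12}$, $\lambda_2=\mathrm{d}_1+\mathrm{b}_1\mathrm{q}_{12}$ with eigenvectors $(1,1)^\top$, $(-1,1)^\top$, producing the explicit matrices $\bm{\mathrm{D}}$ and $\bm{\mathrm{P}}$ displayed in the excerpt. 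The change of variable $\bm{\mathrm{Z}}=\bm{\mathrm{P}}^{-1}\bm{\mathrm{Y}}$ then decouples the system into two scalar ODEs of the form $\lambda_i\mathrm{Z}_i''(\mathrm{t})+\mathrm{Z}_i(\mathrm{t})=\mathrm{g}_{i1}(\mathrm{t})$ with vanishing initial data, whose right-hand sides are the components of $\tfrac{\rho_\mathrm{c}}{\mathrm{k}_\mathrm{c}}\mathrm{c}_0^2\,\bm{\mathrm{P}}^{-1}\bm{\mathrm{B}}$ recorded after (\ref{lamdamulti}).

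Each scalar equation I would solve by Duhamel's principle using the fundamental solution $\lambda_i^{-1/2}\sin\!\big(\lambda_i^{-1/2}(\mathrm{t}-\tau)\big)$, yielding formula (\ref{lamdamulti}). Inverting the change of variable via $\bm{\mathrm{Y}}=\bm{\mathrm{P}}\bm{\mathrm{Z}}$ and reading off the rows of the explicit $\bm{\mathrm{P}}$ produces the stated $\mp/\pm$ combinations for $\mathrm{Y}_1$ and $\mathrm{Y}_2$. Substituting these back into the main asymptotic (\ref{assymptotic-expansion-us}) reproduces (\ref{deri}). Apart from the first reduction step, the remainder is elementary linear algebra together with the variation-of-parameters formula, so the only genuinely non-trivial content of the corollary is the justification of the instantaneous approximation in the dimer regime.
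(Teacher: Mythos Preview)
Your proposal is correct and follows essentially the same route as the paper: reduce the retarded system (\ref{matrixmulti}) to the instantaneous system (\ref{matrixmulti-M-2}) using $|\mathrm{z}_1-\mathrm{z}_2|\sim\delta$, diagonalize the symmetric $2\times2$ matrix $\bm{\mathrm{A}}$ via the explicit $\bm{\mathrm{P}},\bm{\mathrm{D}}$, decouple with $\bm{\mathrm{Z}}=\bm{\mathrm{P}}^{-1}\bm{\mathrm{Y}}$, solve each scalar ODE by Duhamel to obtain (\ref{lamdamulti}), and invert via $\bm{\mathrm{Y}}=\bm{\mathrm{P}}\bm{\mathrm{Z}}$. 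Your identification of the instantaneous reduction as the only non-trivial step is accurate; the paper likewise asserts this approximation without a detailed error analysis.
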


Under this assumption of identical material properties of the injected micro-bubbles, we observe that the eigenvalues of the matrix $\bm{\mathrm{D}}$, denoted by $\lambda_i$, for $i=1,2$, can be expressed as follows:
\begin{align}
    \lambda_i^\mp &\nonumber = \mathrm{d}_i \mp \mathrm{b}_i \mathrm{q}_{12}
    \\ \nonumber &= \frac{\rho_\mathrm{m}}{2}\alpha_2\frac{\rho_\mathrm{c}}{\mathrm{k}_\mathrm{c}}\mathrm{A}_{\partial\Omega_i} \mp \frac{\rho_\mathrm{m}}{2}\alpha_2\delta^3\frac{\rho_\mathrm{c}}{\mathrm{k}_\mathrm{c}} \mathrm{q}_{12}
    \\ \nonumber &= \frac{\rho_\mathrm{m}}{2} \frac{\overline{\rho}_\mathrm{c}}{\overline{\mathrm{k}}_\mathrm{c}}\frac{1}{\overline{\rho}_\mathrm{c}} \Big( \mathrm{A}_{\partial\mathrm{B}_i}\mp\frac{\delta}{|\mathrm{z}_1-\mathrm{z}_2|}\Big),\; \big[\text{as}\; \alpha_2 = \delta^{-2}\frac{1}{\overline{\rho}_\mathrm{c}} + \mathcal{O}(1)\big],
    \\ \nonumber &= \frac{\rho_\mathrm{m}}{2{\overline{\mathrm{k}}_\mathrm{c}}} \Big( \mathrm{A}_{\partial\mathrm{B}_i}\mp\frac{\delta}{|\mathrm{z}_1-\mathrm{z}_2|}\Big)
    \\ \nonumber &= \frac{\rho_\mathrm{m}\mathrm{A}_{\partial\mathrm{B}_i}}{2{\overline{\mathrm{k}}_\mathrm{c}}} \Big( 1 \mp \frac{\delta}{\mathrm{A}_{\partial\mathrm{B}_i}|\mathrm{z}_1-\mathrm{z}_2|}\Big).
\end{align}
Let us then denote  $\omega_\mathrm{M} := \sqrt{
\frac{2\overline{\mathrm{k}_\mathrm{c}}}{\mathrm{A}_{\partial\mathrm{B}}\rho_\mathrm{m}}}.$ Therefore, we have
\begin{align}
    \lambda_i^\mp = \frac{1}{\omega_\mathrm{M}^2}\Big( 1 \mp \frac{\delta}{\mathrm{A}_{\partial\mathrm{B}_i}|\mathrm{z}_1-\mathrm{z}_2|}\Big).
\end{align}
\begin{assumption}
As elucidated in \cite{Ahcene-Mourad-JDE}, only the eigenvalue $\frac{1}{3}$ permits the corresponding eigenfunctions $\big(\mathrm{e}^{(3)}_{\mathrm{n}}\big)_{n \in \mathbb{N}}$ of the Magnetization operator to have non-zero averages, if our computations are limited to the case of a unit sphere. Upon examining the condition (\ref{inversion-cond}), it is therefore apparent that when we consider two micro-bubbles i.e. for $\mathrm{M}=2$, the condition is simplified to
\begin{align}
\frac{\delta}{|\mathrm{z}_1-\mathrm{z}_1|} < \Big(\frac{1}{3\rho_\mathrm{m}}\Big)^\frac{1}{6}.
\end{align}    
\end{assumption}
Utilizing Taylor's series expansion and integration by parts, we can derive the subsequent estimates for $ i=1,2,\ldots,\mathrm{M}:$
\begin{align}\label{toy}
\mathrm{B}_i:=\int_{\partial\Omega_i} \partial_\nu u^\textbf{in}(\mathrm{y},\mathrm{\tau})d\sigma_\mathrm{y} = \int_{\Omega_i} \Delta u^\textbf{in}(\mathrm{y},\mathrm{\tau})d\mathrm{y}  
= \frac{\rho_\mathrm{m}}{\mathrm{k}_\mathrm{m}}\int_{\Omega_i} u_{\mathrm{t}\mathrm{t}}^\textbf{in}(\mathrm{y},\mathrm{\tau})d\mathrm{y}
 = \frac{\rho_\mathrm{m}}{\mathrm{k}_\mathrm{m}}|\Omega_i| u_{\mathrm{t}\mathrm{t}}^\textbf{in}(\mathrm{z}_i,\mathrm{\tau}) + \mathcal{O}(\delta^4),
\end{align}
and
\begin{align}\label{e-multi-1}
  u_{\mathrm{t}\mathrm{t}}^\textbf{in}(\mathrm{z}_1,\mathrm{\tau}) = u_{\mathrm{t}\mathrm{t}}^\textbf{in}(\mathrm{z}_2,\mathrm{\tau}) + \mathcal{O}(\vert z_1-z_2 \vert).
\end{align}

Therefore $\mathrm{B}_1+\mathrm{B}_2=2 u_{\mathrm{t}\mathrm{t}}^\textbf{in}(\mathrm{z}_1,\mathrm{\tau}) +\mathcal{O}(\delta)$.
%\begin{assumption}
%Let us begin by assuming that $\displaystyle\frac{1}{|\partial\Omega_1|}\int_{\partial\Omega_1}\frac{1}{|\mathrm{x}-\mathrm{y}|}d\sigma_\mathrm{y} = \frac{1}{|\partial\Omega_2|}\int_{\partial\Omega_2}\frac{1}{|\mathrm{x}-\mathrm{y}|}d\sigma_\mathrm{y}$. To simplify matters, let us assume that $\Omega_1$ and $\Omega_2$ are spheres centered at $\mathrm{z}_1$ and $\mathrm{z}_2$ respectively, with radius $\delta$. Under these assumptions, we show that the function $\displaystyle\frac{1}{|\partial\Omega_i|}\int_{\partial\Omega_i}\frac{1}{|\mathrm{x}-\mathrm{y}|}d\sigma_\mathrm{y}=\vert \mathrm{x}-\mathrm{z}_i\vert^{-1}$ for $\mathrm{x}\in \mathbb R^3\setminus\Omega_i$ for $i=1,2.$ Consequently, if we consider the two bubbles are equidistant from $\mathrm{x}$, it is evident that $\frac{1}{|\mathrm{x}-\mathrm{z}_1|}=\frac{1}{|\mathrm{x}-\mathrm{z}_2|}.$
% \end{assumption}
Consequently, relying on the aforementioned observation, the expressions for $\lambda_i^\mp$, the estimates presented in (\ref{toy}), (\ref{e-multi-1}), and Corollary \ref{cor1}, we derive the resulting asymptotic expansion as $\delta\to 0$
\begin{align}\label{main-formula-multi}
    u^\mathrm{s}(\mathrm{x},\mathrm{t}) &= \frac{\omega_\mathrm{M}\rho_\mathrm{m}|\mathrm{B}_1|}{4\pi\overline{\mathrm{k}_\mathrm{c}}}\ \delta\ \mathrm{J}_1^{-\frac{1}{2}}\big( \frac{1}{|\mathrm{x}-\mathrm{z_1}|}+ \frac{1}{|\mathrm{x}-\mathrm{z_2}|}\big) \int_0^{\mathrm{t}-\mathrm{c}_0^{-1}|\mathrm{x}-\mathrm{z}_1|} &\nonumber\sin\big(\omega_\mathrm{M}\mathrm{J}_1^{-\frac{1}{2}}(\mathrm{t}-\mathrm{c}_0^{-1}|\mathrm{x}-\mathrm{z}_1|-\tau)\big)\; u_{\mathrm{t}\mathrm{t}}^\textbf{in}(\mathrm{z}_1,\mathrm{\tau})\;d\tau 
    \\ &+ \mathcal{O}\big( \delta \mathrm{J}_1^{-\frac{1}{2}} \frac{\vert z_1-z_2\vert}{|\mathrm{x}-\mathrm{z_2}|}\big)+ \mathcal{O}\big(\mathrm{M}\bm{\xi}^{-1}\delta^2\big),
\end{align}
where $\mathrm{J}_1 = 1 - \frac{\delta}{\mathrm{A}_{\partial\mathrm{B}_1}|\mathrm{z}_1-\mathrm{z}_2|}.$
\newline

Let us now closely scrutinize the form of the time-dependent term that appears in the dominant term (\ref{main-formula-multi}). By employing integration by parts and exploiting the zero initial conditions satisfied by $u^\textbf{in}$, we demonstrate that:
\begin{align}
    &\nonumber \int_0^{\mathrm{t}-\mathrm{c}_0^{-1}|\mathrm{x}-\mathrm{z}_1|} \sin\big(\omega_\mathrm{M}\mathrm{J}_1^{-\frac{1}{2}}(\mathrm{t}-\mathrm{c}_0^{-1}|\mathrm{x}-\mathrm{z}_1|-\tau)\big)\;u_{\mathrm{t}\mathrm{t}}^\textbf{in}(\mathrm{z}_1,\mathrm{\tau})\;d\tau 
    \\ &= \omega_\mathrm{M}^2\mathrm{J}_1^{-1}\int_0^{\mathrm{t}-\mathrm{c}_0^{-1}|\mathrm{x}-\mathrm{z}_1|} \sin\big(\omega_\mathrm{M}\mathrm{J}_1^{-\frac{1}{2}}(\mathrm{t}-\mathrm{c}_0^{-1}|\mathrm{x}-\mathrm{z}_1|-\tau)\big)\;u^\textbf{in}(\mathrm{z}_1,\mathrm{\tau})\;d\tau -\omega_\mathrm{M}\mathrm{J}_1^{-\frac{1}{2}}u^\textbf{in}(\mathrm{z}_1,\mathrm{t}-\mathrm{c}_0^{-1}|\mathrm{x}-\mathrm{z}_1|) 
\end{align}
Upon applying the aforementioned decomposition, it becomes apparent that the dominant term expressed in (\ref{main-formula-multi}) can be resolved into two distinct reflected waves. Specifically, we define the primary reflected wave as $\mathrm{U}_1(\mathrm{x},\mathrm{t})$, which can be expressed as follows:
$$
\mathrm{U}_1(\mathrm{x},\mathrm{t}):=\frac{\omega^2_\mathrm{M}\rho_\mathrm{m}|\mathrm{B}_1|}{4\pi\overline{\mathrm{k}_\mathrm{c}}}\;\mathrm{J}_1^{-1}\; \delta\; \big( \frac{1}{|\mathrm{x}-\mathrm{z_1}|}+ \frac{1}{|\mathrm{x}-\mathrm{z_2}|}\big)\; u^\textbf{in}(\mathrm{z}_1,t-\mathrm{c}_0^{-1}|\mathrm{x}-\mathrm{z}_1|),
$$
In addition, we introduce the secondary reflected wave as $\mathrm{U}_2(\mathrm{x},\mathrm{t})$, which is defined as:
$$
\mathrm{U}_2(\mathrm{x},\mathrm{t}):=\frac{\omega^3_\mathrm{M}\rho_\mathrm{m}|\mathrm{B}_1|}{4\pi\overline{\mathrm{k}_\mathrm{c}}}\;\mathrm{J}_1^{-\frac{3}{2}}\; \delta\; \big( \frac{1}{|\mathrm{x}-\mathrm{z_1}|}+ \frac{1}{|\mathrm{x}-\mathrm{z_2}|}\big)\;\int_0^{\mathrm{t}-\mathrm{c}_0^{-1}|\mathrm{x}-\mathrm{z}_1|} \sin\big(\mathrm{J}_1^{-\frac{1}{2}}\omega_\mathrm{M}(\mathrm{t}-\mathrm{c}_0^{-1}|\mathrm{x}-\mathrm{z}_1|-\tau)\big)u^\textbf{in}(\mathrm{z}_1,\mathrm{\tau}) d\tau
$$
It is important to note that $\mathrm{U}_1(\mathrm{x},\mathrm{t})$ is the primary wave, while $\mathrm{U}_2(\mathrm{x},\mathrm{t})$ is the secondary wave.
\begin{enumerate}
\item The primary reflected wave can be described as the incident wave that has been time-shifted by $\mathrm{c}_0^{-1}|\mathrm{x}-\mathrm{z}|$, and "amplified" by the amplitude $\displaystyle\frac{\omega^2_\mathrm{M}\rho_\mathrm{m}|\mathrm{B}_1|}{4\pi\overline{\mathrm{k}}_\mathrm{c}}\; \delta\;\mathrm{J}_1^{-1} \big( \frac{1}{|\mathrm{x}-\mathrm{z_1}|}+ \frac{1}{|\mathrm{x}-\mathrm{z_2}|}\big)$. This amplification is computed using the integral expression, which can be simplified as follows: $\displaystyle\frac{|\mathrm{B}_1|}{A_{\partial B_1}}\;\mathrm{J}_1^{-1} \;\frac{\delta}{2\pi}\; \big( \frac{1}{|\mathrm{x}-\mathrm{z_1}|}+ \frac{1}{|\mathrm{x}-\mathrm{z_2}|}\big)$
\bigskip

\item The secondary reflected wave is characterized by its resonant, oscillating field. Its amplitude is obtained by the coefficient $\displaystyle\frac{\omega^3_\mathrm{M}\rho_\mathrm{m}|\mathrm{B}_1|}{4\pi\overline{\mathrm{k}_\mathrm{c}}}\;\delta\;\mathrm{J}_1^{-\frac{3}{2}}\; \big( \frac{1}{|\mathrm{x}-\mathrm{z_1}|}+ \frac{1}{|\mathrm{x}-\mathrm{z_2}|}\big)$  
\newline

To gain a better understanding of the behavior of this oscillating field, we examine the scenario where the incident field $u^\textbf{in}$ is given by a wavefront $u^\textbf{in}( \mathrm{y},\mathrm{t}):=\frac{\delta(\mathrm{t}- \mathrm{c}_{0}^{-1}\vert \mathrm{y}-\mathrm{x}_0\vert)}{\vert \mathrm{y}-\mathrm{x}_0\vert}$. Under this condition, we obtain the expression for $\mathrm{U}_2(\mathrm{x},\mathrm{t})$ as follows:
$$
\mathrm{U}_2(\mathrm{x},\mathrm{t}):=\frac{\omega^3_\mathrm{M}\rho_\mathrm{m}|\mathrm{B}_1|}{4\pi\overline{\mathrm{k}_\mathrm{c}}} \;\delta\;\mathrm{J}_1^{-\frac{3}{2}} \big( \frac{1}{|\mathrm{x}-\mathrm{z_1}|}+ \frac{1}{|\mathrm{x}-\mathrm{z_2}|}\big) \frac{\sin\big(\omega_\mathrm{M}\mathrm{J}_1^{-\frac{1}{2}}(\mathrm{t}-\mathrm{c}_0^{-1}|\mathrm{x}-\mathrm{z}|-\mathrm{c}_0^{-1}|\mathrm{z}-\mathrm{x}_0|)\big)}{|\mathrm{z}-\mathrm{x}_0|}.
$$
\end{enumerate}
\bigskip

The analysis above shows that the amount of pressure can be increased if we put two bubbles, close to each other, than only one. A natural question then arises. What happen if we increase the number of bubbles, close to each other, i.e. in the form of a polymer? At the technical analysis level, the ideas in the previous subsection extend to such ensemble of bubbles which are equidistant and with a distance of the order $\delta$. It is obvious that, in $\mathbb{R}^3$, this is possible if we have $4$ equidistant bubbles, at maximum, i.e. $M=4$. In the next subsection, we state the corresponding results.
\subsubsection{The pressure generated by a tetramer of bubbles i.e. \texorpdfstring{$\mathrm{M}=4$}{M}}\label{M-4}
Let consider $4$ bubbles to be equidistant from each other, and 'centered' at the points $z_i, i=1, 2, 3, 4$, with  $|z_i - z_j| \sim \delta$ for $i \neq j$, where $i,j = 1,2,3,4$. Then, similar to the case for a dimer, the matrix $\bm{\mathrm{A}}$ takes the following form \begin{align}
     \bm{\mathrm{A}}:=diag(\mathrm{d}_i)-\bm{\mathrm{Q}}
 \end{align}
 where $\mathrm{d}_i:= \frac{\rho_\mathrm{m}}{2}\alpha_i\frac{\rho_\mathrm{c}}{\mathrm{k}_\mathrm{c}} \mathrm{A}_{\partial\Omega_i}$ and $\bm{\mathrm{Q}} :=\big(\mathrm{q}_{ij}\big)_{i,j=1}^\mathrm{M}$ is given by
\begin{align}
\mathrm{q}_{ij}=\begin{cases}
0 & \text{for}\ i=j,\\
\frac{\mathrm{b}_j}{|\mathrm{z}_i-\mathrm{z}_j|} & \text{for}\ i\ne j,
\end{cases}
\end{align}
where $\mathrm{b}_j$ is defined as $\mathrm{b}_j:= \frac{\rho_\mathrm{m}}{2}\alpha_j\delta^3\frac{\rho_\mathrm{c}}{\mathrm{k}_\mathrm{c}}$. Therefore, due to the Assumption \ref{assumption} and the diagonalizability of the matrix $\bm{\mathrm{A}}$, we have $\bm{\mathrm{A}} = \bm{\mathrm{P}}\bm{\mathrm{D}}\bm{\mathrm{P}}^{-1}$, where
\begin{align}
    \bm{\mathrm{D}}:= diag(\lambda_1,\lambda_2,\lambda_3,\lambda_4)= \begin{pmatrix}
        \mathrm{d}_1 - 3 \mathrm{q}_{12} & 0 &  0\\
        0 & \mathrm{d}_1 + \mathrm{q}_{12} & 0 & 0 \\
        0 & 0 & \mathrm{d}_1 + \mathrm{q}_{12} & 0\\
        0 & 0 & 0 & \mathrm{d}_1 + \mathrm{q}_{12} 
     \end{pmatrix},
\end{align}
\begin{align}
     \bm{\mathrm{P}} := \begin{pmatrix}
        1 & -1 &  -1 & -1\\
        1 & 0 & 0 & 1 \\
        1 & 0 & 1 & 0\\
        1 & 1 & 0 & 0 
     \end{pmatrix}\; \text{and} \; 
     \bm{\mathrm{P}}^{-1}:= \frac{1}{4}\begin{pmatrix}
        1 & 1 &  1 & 1\\
        -1 & -1 & -1 & 3 \\
       -1 & -1 & 3 & -1\\
       -1 & 3 & -1  & -1 
     \end{pmatrix}.
 \end{align}
Then, similar to the asymptotic expansion given in (\ref{deri}), we obtain the following expressions:
\begin{align}
  \mathrm{Y}_1(\mathrm{t}) \nonumber&= \lambda_1^{-\frac{1}{2}}\int_0^\mathrm{t}\sin \Big( \lambda_1^{-\frac{1}{2}}(\mathrm{t}-\tau)\Big) \mathrm{g}_{11}(\tau) d\tau - \lambda_2^{-\frac{1}{2}}\int_0^\mathrm{t}\sin \Big( \lambda_2^{-\frac{1}{2}}(\mathrm{t}-\tau)\Big) \mathrm{g}_{21}(\tau) d\tau
  \\ &-\lambda_3^{-\frac{1}{2}}\int_0^\mathrm{t}\sin \Big( \lambda_1^{-\frac{1}{2}}(\mathrm{t}-\tau)\Big) \mathrm{g}_{31}(\tau) d\tau - \lambda_4^{-\frac{1}{2}}\int_0^\mathrm{t}\sin \Big( \lambda_2^{-\frac{1}{2}}(\mathrm{t}-\tau)\Big) \mathrm{g}_{41}(\tau) d\tau,  
\end{align}
\begin{align}
  \mathrm{Y}_2(\mathrm{t}) = \lambda_1^{-\frac{1}{2}}\int_0^\mathrm{t}\sin \Big( \lambda_1^{-\frac{1}{2}}(\mathrm{t}-\tau)\Big) \mathrm{g}_{11}(\tau) d\tau + \lambda_4^{-\frac{1}{2}}\int_0^\mathrm{t}\sin \Big( \lambda_2^{-\frac{1}{2}}(\mathrm{t}-\tau)\Big) \mathrm{g}_{41}(\tau) d\tau, 
\end{align}
\begin{align}
  \mathrm{Y}_3(\mathrm{t}) = \lambda_1^{-\frac{1}{2}}\int_0^\mathrm{t}\sin \Big( \lambda_1^{-\frac{1}{2}}(\mathrm{t}-\tau)\Big) \mathrm{g}_{11}(\tau) d\tau + \lambda_3^{-\frac{1}{2}}\int_0^\mathrm{t}\sin \Big( \lambda_2^{-\frac{1}{2}}(\mathrm{t}-\tau)\Big) \mathrm{g}_{31}(\tau) d\tau, 
\end{align}
and 
\begin{align}
  \mathrm{Y}_4(\mathrm{t}) = \lambda_1^{-\frac{1}{2}}\int_0^\mathrm{t}\sin \Big( \lambda_1^{-\frac{1}{2}}(\mathrm{t}-\tau)\Big) \mathrm{g}_{11}(\tau) d\tau + \lambda_2^{-\frac{1}{2}}\int_0^\mathrm{t}\sin \Big( \lambda_2^{-\frac{1}{2}}(\mathrm{t}-\tau)\Big) \mathrm{g}_{21}(\tau) d\tau. 
\end{align}
Also, we have
\begin{align}
\begin{cases}
    \displaystyle\mathrm{g}_{11}(\mathrm{t}) := \frac{1}{4}\Big(\int_{\partial\Omega_1} \partial_\nu u^\textbf{in}+\int_{\partial\Omega_2} \partial_\nu u^\textbf{in}+ \int_{\partial\Omega_3} \partial_\nu u^\textbf{in}+\int_{\partial\Omega_4} \partial_\nu u^\textbf{in}\Big) \\
    \displaystyle\mathrm{g}_{21}(\mathrm{t}) := \frac{1}{4}\Big(-\int_{\partial\Omega_1} \partial_\nu u^\textbf{in}-\int_{\partial\Omega_2} \partial_\nu u^\textbf{in} - \int_{\partial\Omega_3} \partial_\nu u^\textbf{in}+3\int_{\partial\Omega_4} \partial_\nu u^\textbf{in}\Big)
    \\ 
   \displaystyle\mathrm{g}_{31}(\mathrm{t}) := \frac{1}{4}\Big(-\int_{\partial\Omega_1} \partial_\nu u^\textbf{in}-\int_{\partial\Omega_2} \partial_\nu u^\textbf{in}+ 3\int_{\partial\Omega_3} \partial_\nu u^\textbf{in}-\int_{\partial\Omega_4} \partial_\nu u^\textbf{in}\Big)
   \\ 
   \displaystyle\mathrm{g}_{41}(\mathrm{t}) := \frac{1}{4}\Big(-\int_{\partial\Omega_1} \partial_\nu u^\textbf{in}+3\int_{\partial\Omega_2} \partial_\nu u^\textbf{in} - \int_{\partial\Omega_3} \partial_\nu u^\textbf{in} -\int_{\partial\Omega_4} \partial_\nu u^\textbf{in}\Big).
\end{cases}
\end{align}
Consequently, with the aid of similar estimates as those derived in (\ref{toy}) and (\ref{e-multi-1}), and following the same approach discussed for a dimer of bubbles, we deduce the resulting asymptotic expansion as $\delta\to 0$
\begin{align}
    &u^\mathrm{s}(\mathrm{x},\mathrm{t}) \nonumber 
    \\&= \frac{\omega_\mathrm{M}\rho_\mathrm{m}|\mathrm{B}_1|}{4\pi\overline{\mathrm{k}}_\mathrm{c}}\ \delta\ \mathrm{J}_1^{-\frac{1}{2}}\big( \frac{1}{|\mathrm{x}-\mathrm{z_1}|}+ \frac{1}{|\mathrm{x}-\mathrm{z_2}|} + \frac{1}{|\mathrm{x}-\mathrm{z_3}|} + \frac{1}{|\mathrm{x}-\mathrm{z_4}|}\big) \mspace{-35mu} \int\limits_0^{\mathrm{t}-\mathrm{c}_0^{-1}|\mathrm{x}-\mathrm{z}_1|}&\nonumber \mspace{-45mu}\sin\big(\omega_\mathrm{M}\mathrm{J}_1^{-\frac{1}{2}}(\mathrm{t}-\mathrm{c}_0^{-1}|\mathrm{x}-\mathrm{z}_1|-\tau)\big)\; u_{\mathrm{t}\mathrm{t}}^\textbf{in}(\mathrm{z}_1,\mathrm{\tau})\;d\tau 
    \\ &+ \mathcal{O}\big( \delta \mathrm{J}_1^{-\frac{1}{2}} \frac{\vert z_1-z_j\vert}{|\mathrm{x}-\mathrm{z_j}|}\big) + \mathcal{O}\big(\bm{\xi}^{-1}\delta^2\big).
\end{align}
where $\mathrm{J}_1 = 1 - 3\frac{\delta}{\mathrm{A}_{\partial\mathrm{B}_1}|\mathrm{z}_1-\mathrm{z}_2|}, \; \text{for}\ \mathrm{j} = 2,3,4.$
\newline

As previously discussed for a dimer, the main part of the above asymptotic expansion can be broken down into a sum of primary reflected waves and secondary reflected waves generated by a tetramer. Hence, we observe that the distribution of a tetramer of bubbles allows us to generate more pressure near its domain compared to a dimer of bubbles. Precisely, we first see the appearance of the factor $3$ in the definition of $\mathrm{J}_1$ which makes larger than in the case of dimer. In addition, we have the appearance of the sum of four terms, i.e. $\big( \frac{1}{|\mathrm{x}-\mathrm{z_1}|}+ \frac{1}{|\mathrm{x}-\mathrm{z_2}|} + \frac{1}{|\mathrm{x}-\mathrm{z_3}|} + \frac{1}{|\mathrm{x}-\mathrm{z_4}|}\big)$, instead of only two terms of the dimer.
\newline
Based on these observations, it is legitimate to consider that by adding more and more bubbles, in the polymer, we could increase the amount of created pressure near it while decaying away from it.

\subsubsection{The pressure generated by a finite collection of Dimers of bubbles}
Here, we consider the situation where the set of $M=2 \aleph$ bubbles are distributed as a collection of dimers and those dimers are apart from each other. 
Precisely, they are of the form $D_{j_1}, D_{j_2}, j=1, ..., \aleph $ and the minimum distance between all the dimers $d_{\aleph}$ is large as compared to the maximum diameter of the bubbles, i.e. 
\begin{equation}\label{dimers-separation}
\delta \ll d_{\aleph}.
\end{equation} 
Under this last assumption, we clearly see that the problem (\ref{matrixmulti}) reduces, approximately in terms of the ratio $\frac{\delta}{d_{\aleph}}$, to the following form:
\begin{align}
\begin{cases}
     \bm{\mathrm{A}}\frac{\mathrm{d}^2}{\mathrm{d}\mathrm{t}^2} \bm{\mathrm{Y}}(\mathrm{t}) + \bm{\mathrm{Y}}(\mathrm{t}) = \frac{\rho_\mathrm{c}}{\mathrm{k}_\mathrm{c}}\mathrm{c}_0^2\ \bm{\mathrm{B}} \; \text{in}\ (0,\mathrm{T})
     \\ \bm{\mathrm{Y}}(0) = \frac{\mathrm{d}}{\mathrm{d}\mathrm{t}}\bm{\mathrm{Y}}(0) = 0.
\end{cases}
\end{align}
Here, $\bm{\mathrm{A}}$ is the block diagonal matrix defined as $\bm{\mathrm{A}} := diag\big(\bm{\mathrm{A}}_1,\bm{\mathrm{A}}_2,\ldots,\bm{\mathrm{A}}_\aleph\big),$ where $\bm{\mathrm{A}}_i$ is a $2\times 2$ matrix, given by:
\begin{align}
    \bm{\mathrm{A}}_i := 
    \begin{bmatrix}
     \mathrm{d}_i & \mathrm{b}_i\;\mathrm{q}_{i2}\\
     \mathrm{b}_i\;\mathrm{q}_{2i} & \mathrm{d}_i
    \end{bmatrix},
\end{align}
and $\bm{\mathrm{Y}} := \big(\bm{\mathrm{Y}}_{1},\bm{\mathrm{Y}}_{2},\ldots,\bm{\mathrm{Y}}_{\aleph}\big)^\mathrm{T}$ represents the vector-solution of the above differential equation with $\bm{\mathrm{Y}}_i := \big(\bm{\mathrm{Y}}_{i1},\bm{\mathrm{Y}}_{i2}\big)^\mathrm{T}.$ To see this, it is enough to observe that, under the condition (\ref{dimers-separation}), the interaction matrix (\ref{interaction-matrix}) reduces to a block diagonal matrix. 
\newline

As a consequence, we derive the resulting asymptotic expansion as $\delta\to 0$
\begin{align}\label{dimers}
    \nonumber&u^\mathrm{s}(\mathrm{x},\mathrm{t}) \\ \nonumber&= \sum_{\mathrm{i}=1}^\aleph\frac{\omega_\mathrm{M}\rho_\mathrm{m}|\mathrm{B}_{2i-1}|}{4\pi\overline{\mathrm{k}_\mathrm{c}}}\ \delta\ \mathrm{J}_i^{-\frac{1}{2}}\big( \frac{1}{|\mathrm{x}-\mathrm{z_{2i-1}}|}+ \frac{1}{|\mathrm{x}-\mathrm{z_{2i}}|}\big)\mspace{-30mu}\int\limits_0^{\mathrm{t}-\mathrm{c}_0^{-1}|\mathrm{x}-\mathrm{z}_{2i-1}|} \mspace{-40mu}\sin\big(\omega_\mathrm{M}\mathrm{J}_i^{-\frac{1}{2}}(\mathrm{t}-\mathrm{c}_0^{-1}|\mathrm{x}-\mathrm{z}_i|-\tau)\big)\; u_{\mathrm{t}\mathrm{t}}^\textbf{in}(\mathrm{z}_{2i-1},\mathrm{\tau})\;d\tau 
    \\ &+ \mathcal{O}\big( \delta \mathrm{J}_i^{-\frac{1}{2}} \frac{\vert \mathrm{z}_{2i-1}-\mathrm{z}_{2i}\vert}{|\mathrm{x}-\mathrm{z}_{2i}|}\big) + \mathcal{O}\big(\mathrm{M}\bm{\xi}^{-1}\delta^2\big),
\end{align}
where $\mathrm{J}_i = 1 - \frac{\delta}{\mathrm{A}_{\partial\mathrm{B}_{2i-1}}|\mathrm{z}_{2i-1}-\mathrm{z}_{2i}|}$, with $\mathrm{z}_{2i-1}$ and $\mathrm{z}_{2i}$ represents the centres of the corresponding dimer.
\newline

As previously discussed, the dominant part of the above asymptotic expansion can be decomposed into a sum of primary reflected waves and secondary reflected waves generated by each dimer. Therefore, this way, we can generate desired pressure around each dimer and hence near the domain where the dimers were distributed. 
\newline
As described in Section \ref{M-4}, following the steps described in the current section, we can distribute polymers, which are well seperated, all around the domain of interest $\Omega$ to create the desired pressure.

\subsubsection{The pressure generated by a periodic cluster of bubbles}
\bigskip

Now, we distribute the bubbles periodically  inside a given smooth domain $\mathbf{\Omega}$ with a period given by $\delta$, which means that the distance between close bubbles is of the order $\delta$. For simplicity, here also, we take the bubbles to be spherically shaped and having all the same contrasts. In this case, the pressure derived in (\ref{incident-wave})-(\ref{inversion-cond})-(\ref{assymptotic-expansion-us})-(\ref{matrixmulti}) converges to the solution of the following kind of Lippmann-Schwinger equation
\begin{equation}\label{effective-equation}
d\; \rchi_{\bm{\Omega}}\; \frac{\partial ^2}{\partial t^2} \bm{\mathrm{Y}} (\mathrm{x},\mathrm{t}) + \bm{\mathrm{Y}} (\mathrm{x},\mathrm{t}) \textcolor{black}{+}\int_{\mathbf{\Omega}}\frac{b}{4\pi\vert x-y\vert} \frac{\partial ^2}{\partial t^2}{\mathrm{Y} (x, t-c\vert x-y\vert)} dy =a \frac{\partial ^2}{\partial t^2}u^\textbf{in}(\mathrm{x},\mathrm{t}),\mbox{ for } \mathrm{x} \in \mathbb{R}^3, \mathrm{t} \in (0, \mathrm{T}),
\end{equation}
with appropriate coefficients $a, b, c$ and $d$. To this equation, we add the homogeneous initial conditions till the order $1$ for $\mathrm{Y}$.
\bigskip

Applying the differential operation $\big( c^{-1}\partial^2_t-\Delta \big)$ to (\ref{effective-equation}), we get 
\begin{equation}\label{dispersive-acoustic-model}
\big( c^{-1}\partial^2_t-\Delta \big)\big(d\; \rchi_{\bm{\Omega}}\;  \partial^2_{t} \mathrm{Y} +\mathrm{Y}\big)+b\rchi_{\mathbf{\Omega}}\; \partial_{t^2} \mathrm{Y}=0.
\end{equation}
We set $\mathrm{P}:=d\; \rchi_{\mathbf{\Omega}}\; \partial^2_{t} \mathrm{Y}+\mathrm{Y}$, then we get the following dispersive acoustic model

\begin{align}\label{math-model-dispersive-media}
\begin{cases}   
\partial_{t} \mathrm{U}+\nabla \mathrm{P}& = 0, \\
c^{-1}\partial_{t} \mathrm{P}+\nabla \cdot \mathrm{U} & = -b\; \rchi_{\mathbf{\Omega}}\; \partial_{t} \mathrm{Y}, \\
 d\; \rchi_{\mathbf{\Omega}}\; \partial^2_{t} \mathrm{Y}+\mathrm{Y}-\mathrm{P} &=0.
\end{cases}
\end{align}
Here $\mathrm{P}$ and $\mathrm{U}$ play the roles of the acoustic pressure and velocity fields respectively. The third relation plays a similar role as the electromagnetic susceptibility in modeling the electromagnetic dispersive media. Observe that when $d\ll 1$, then the model (\ref{math-model-dispersive-media}) reduces to the classical non-dispersive acoustic model
\begin{align}\label{math-model-non-dispersive-media}
\begin{cases}   
\partial_{t} \mathrm{U}+\nabla \mathrm{P}& = 0, \\
\big(c^{-1}+b\; \rchi_{\mathbf{\Omega}}\big)\partial_{t} \mathrm{P}+\nabla \cdot \mathrm{U} & =0.
\end{cases}
\end{align}
Therefore the coefficient $d$ is responsible for the dispersion effect. Looking closely at the definition of $d$, we see that $d=\omega^2_M+o(1)$ as $\delta \ll 1$, where $\omega_M$ is the Minnaert resonant frequency that is generated by the injected bubbles. The bubble's properties can be tuned to generate small Minnaert resonant frequency.

Let us now deal with our original goal of generating a desired pressure $\mathrm{P}(\mathrm{x},\mathrm{t})$ in $\mathbf{\Omega}\times (0, \mathrm{T})$. We rephrase the question as follows: Given $\mathrm{P}_0(\mathrm{x},\mathrm{t})$ in $\mathbf{\Omega}\times (0, \mathrm{T})$, can we find a coefficient $b$ so that the pressure $\mathrm{P}(\mathrm{x},\mathrm{t})$, generated through the model (\ref{math-model-dispersive-media}), equals the desired pressure $\mathrm{P}_0(\mathrm{x},\mathrm{t})$ in $\mathbf{\Omega}\times (0, \mathrm{T})$.

Indeed, solving the Cauchy problem $d\; \partial^2_{t} \mathrm{Y}+\mathrm{Y}=\mathrm{P}_0$ with zero initial values for $\partial^k_{t} \mathrm{Y}$, $k=0, 1$, we derive the values of $Y$. From (\ref{dispersive-acoustic-model}), i.e.
\begin{equation}\label{dispersive-acoustic-model-P0}
\big( c^{-1}\partial^2_t-\Delta \big) \mathrm{P}_0 +b\;\partial^2_{t} \mathrm{Y}=0,
\end{equation}
and at the expense of a numerical differentiation, we can derive the value of $b$.

The generation of the dispersive effective model (\ref{effective-equation}) (or (\ref{dispersive-acoustic-model})) from the cluster of bubbles is justified in \cite{AM4} under more general conditions on the cluster. In addition to handling general shaped bubbles, the periodicity in distributing these bubbles is not required.

The rest of the paper is divided into two sections. In Section \ref{mainthmulti}, we start by providing the needed Sobolev spaces and few of their properties followed with the used volume and surface integral operators, see Section \ref{Preliminaries-text}. In Section \ref{integral-equations}, we state the main time-domain system of integral equations with which we represent the acoustic field. The main part of the proof is given in Section \ref{dominant-fields} and Section \ref{End-proof}, where we derive the dominant acoustic fields generated by the cluster of bubbles. The needed a priori estimates are derived in Section \ref{apri}.

%--------------------------------------------------------------------------------
%--------------------------------------------------------------------------------

\section{Proof of Theorem \ref{mainthmulti}}

In this section, we present a comprehensive justification of the asymptotic expansion for the solution to (\ref{math-modelmulti}) in the limit of $\delta \ll 1$. Additionally, we establish the unique solvability of the linear algebraic system (\ref{algebricsystem}) in the case of multiple bubbles.

%--------------------------------------------------------------------------------
%--------------------------------------------------------------------------------

\subsection{Mathematical Preliminaries}\label{Preliminaries-text}
Let us introduce the appropriate function spaces that will be utilized in this article. For $\mathrm{r}\in \mathbb{R}$ and $\mathrm{T}\in (0,\infty]$, we shall define the space $\mathrm{H}_0^\mathrm{r}(0,\mathrm{T})$ as follows:
\begin{align}\nonumber
    \mathrm{H}_0^\mathrm{r}(0,\mathrm{T}) := \Big\{\mathrm{g}|_{(0,\mathrm{T})}: \mathrm{g} \in \mathrm{H}^\mathrm{r}(\mathbb{R})\; \text{with} \ \mathrm{g}_{(-\infty,0)}\equiv 0\Big\}.
\end{align}
Similarly, we can introduce a similar notion of a generalized space consisting of $\mathrm{X}$-valued function where $\mathrm{X}$ is a Hilbert space and denote it by $\mathrm{H}^\mathrm{r}_{0}(0,\mathrm{T};\mathrm{X})$. For $\sigma>0$ and $\mathrm{r}\in \mathbb{Z}_+$, we define
\begin{align}\nonumber
    \mathrm{H}^\mathrm{r}_{0,\sigma}(0,\mathrm{T};\mathrm{X}) := \Big\{\mathrm{f} \in \mathrm{H}^\mathrm{r}_{0}(0,\mathrm{T};\mathrm{X}): \sum_{\mathrm{n}=0}^\mathrm{r}\int_0^\mathrm{T}\mathrm{e}^{-2\sigma\mathrm{t}} \Vert\partial_\mathrm{t}^\mathrm{n}\mathrm{f}(\cdot,\mathrm{t})\Vert_\mathrm{X} d\mathrm{t}<\infty\Big\},
\end{align}
where the norm is given by
\begin{align}
    \mathrm{H}^\mathrm{r}_{0,\sigma}(0,\mathrm{T};\mathrm{X}) := \Big(\int_0^\mathrm{T}\mathrm{e}^{-2\sigma \mathrm{t}}\Big[\Vert \mathrm{f}\Vert_\mathrm{X}^2 + \sum_{\mathrm{n}=1}^\mathrm{r} \mathrm{T}^{2\mathrm{n}}\Vert \partial_\mathrm{t}^\mathrm{n}\mathrm{f} \Vert_\mathrm{X}^2\Big]d\mathrm{t}\Big)^\frac{1}{2}.
\end{align}
As a next step, we present an equivalent norm for $f \in \mathrm{H}^{\mathrm{r}}(\partial\Omega_i)$ based on the Slobodeckij norm for $\mathrm{r}\in (0,1)$, as described in \cite[pp. 20]{Grisvard}. This norm is given by
\begin{align}\label{normd1}
    \Vert \mathrm{f}\Vert_{\mathrm{H}^{\mathrm{r}}(\partial\Omega_i)} := \Vert \mathrm{f} \Vert_{\mathrm{L}^2(\partial \Omega_i)}^2 + \int_{\partial\Omega}\int_{\partial\Omega_i}\frac{|\mathrm{f}(\mathrm{x})-\mathrm{f}(\mathrm{y})|^2}{|\mathrm{x}-\mathrm{y}|^{2+2\mathrm{r}}}d\sigma_\mathrm{x}d\sigma_\mathrm{y}.
\end{align}
We also define the following dual norm for $\varphi \in \mathrm{H}^{-\frac{1}{2}}(\partial\Omega_i)$:
\begin{align}\label{normd2}
 \Vert \varphi \Vert_{\mathrm{H}^{-\frac{1}{2}}(\partial\Omega_i)} &= \sup_{0\neq \mathrm{f} \in \mathrm{H}^{\frac{1}{2}}(\partial\Omega_i )} \dfrac{|\langle \varphi, \mathrm{f} \rangle_{\partial \Omega_i}|}{\Vert \mathrm{f} \Vert_{\mathrm{H}^{\frac{1}{2}}(\partial\Omega_i )}},
 \end{align}
where $\langle\cdot,\cdot\rangle_{\partial\Omega_i}$ denotes the duality pairing between $\mathrm{H}^{\frac{1}{2}}(\partial\Omega_i)$ and $\mathrm{H}^{-\frac{1}{2}}(\partial\Omega_i)$. 
\newline

In our analysis, we require several function spaces denoted using blackboard bold font to represent vector fields within $\mathbb R^3$. Firstly, we introduce the following function spaces:
\begin{align}\nonumber
   \mathbb{H}(\text{div},\Omega):= \Big\{ \mathrm{f}\in \big(\mathrm{L}^{2}(\Omega)\big)^3 :\; \text{div}\; \mathrm{f} \in \mathrm{L}^{2}(\Omega)\Big\} \; \text{and}\
   \mathbb{H}(\text{curl},\Omega):= \Big\{ \mathrm{f}\in \big(\mathrm{L}^{2}(\Omega)\big)^3 :\; \text{curl}\; \mathrm{f} \in \big(\mathrm{L}^{2}(\Omega)\big)^3\Big\}.
\end{align}
We then consider the space of divergence-free as well as the space of irrotational vector fields 
\begin{align}\nonumber
   \mathbb{H}(\text{div}\;0,\Omega):= \Big\{ \mathrm{f}\in \mathbb{H}(\text{div},\Omega) :\; \text{div}\; \mathrm{f}= 0 \Big\} \; \text{and}\
   \mathbb{H}(\text{curl}\; 0,\Omega):= \Big\{ \mathrm{f}\in \mathbb{H}(\text{curl},\Omega) :\; \text{curl}\; \mathrm{f} = 0\Big\},
\end{align}
and their sub-spaces
\begin{align}\nonumber
   \mathbb{H}_0(\text{div}\;0,\Omega):= \Big\{ \mathrm{f}\in \mathbb{H}(\text{div}\;0,\Omega) :\; \nu\cdot\mathrm{f}= 0\; \text{on}\ \partial\Omega \Big\} \; \text{and}\
   \mathbb{H}_0(\text{curl}\; 0,\Omega):= \Big\{ \mathrm{f}\in \mathbb{H}(\text{curl}\;0,\Omega) :\; \nu \times \mathrm{f} = 0 \; \text{on}\ \partial\Omega\Big\},
\end{align}
respectively. 
We then define the retarded volume potential $\bm{\mathrm{V}}_{\Omega_i}$ by
\begin{align}
    \bm{\mathrm{V}}^{(ii)}\big[f\big](\mathrm{x},\mathrm{t}) := \int_{\Omega_i}\frac{\rho_\mathrm{m} }{4\pi|\mathrm{x}-\mathrm{y}|}f(\mathrm{y},\mathrm{t}-\mathrm{c}_0^{-1}|\mathrm{x}-\mathrm{y}|)d\mathrm{y}, \quad (\mathrm{x},\mathrm{t}) \in \Omega_i \times(0,\mathrm{T}).
\end{align}
and the adjoint double layer retarded wave operator $\bm{\mathcal{K}}^\mathrm{t}$ is defined for $(\mathrm{x},\mathrm{t}) \in \partial\Omega_i \times(0,\mathrm{T})$ by
\begin{align}
    \bm{\mathcal{K}}^{(ii)}\Big[f\Big](\mathrm{x},\mathrm{t}) := -\rho_\mathrm{m}\Big[\mathrm{c}_0^{-1}\int_{\partial \Omega_i} \frac{f_\mathrm{t}(\mathrm{y}, \mathrm{t}- \mathrm{c}_0^{-1}|\mathrm{x}-\mathrm{y}|)}{4\pi|\mathrm{x}-\mathrm{y}|}\frac{(\mathrm{x}-\mathrm{y})\cdot\nu_\mathrm{x}}{|\mathrm{x}-\mathrm{y}|}d\sigma_\mathrm{y} + \int_{\partial \Omega_i} \frac{f(\mathrm{y}, \mathrm{t}- \mathrm{c}_0^{-1}|\mathrm{x}-\mathrm{y}|)}{4\pi|\mathrm{x}-\mathrm{y}|}\frac{(\mathrm{x}-\mathrm{y})\cdot\nu_\mathrm{x}}{|\mathrm{x}-\mathrm{y}|^2}d\sigma_\mathrm{y}\Big].
\end{align}
Throughout this paper, we use the standard Sobolev space of order $\mathrm{r}$ on $\Omega_i$, which we denote as $\mathrm{H}^\mathrm{r}(\Omega_i)$. Let us finally mention that the operators we employ are defined using a bold symbol.

\subsection{Integral Representation of the Solution} \label{integral-equations}
We begin by defining the wave number $\mathrm{c}$ as the square root of the ratio between the bulk modulus $\mathrm{k}$ and the mass density $\rho$. This is essential as it allows for a distinction in propagation speed of the pressure field between the interior and exterior of the bubble $\Omega_i$. Additionally, we take into account the background velocity $\mathrm{c}_0$, which is determined by the ratio between the wave number $\mathrm{k}_\mathrm{m}$ and the mass density $\rho_\mathrm{m}$. We denote the unperturbed Green function of the background medium $(\rho_\mathrm{m},\mathrm{k}_\mathrm{m})$ by $\mathrm{G}^{\textcolor{black}{(m)}}(\mathrm{x},\mathrm{t})$, which can be expressed as follows
\begin{align}
\mathrm{G}^{\textcolor{black}{(m)}}(\mathrm{x},\mathrm{t}) := \rho_\mathrm{m}\frac{\delta_0(\mathrm{t}-\mathrm{c}_0^{-1} |\mathrm{x}|)}{4\pi |\mathrm{x}|}\; \text{in}\; \mathbb{R}^3\times \mathbb{R},
\end{align}
Here, $\delta_0$ denotes the Dirac delta distribution, and $\mathrm{G}^w$ is commonly known as the fundamental solution of the wave equation. It is imperative to consider the above factors while studying the behavior of the system. 
\newline
Let $u$ denote the solution to the equation (\ref{math-modelmulti}). Utilizing the Lippmann-Schwinger representation, as derived in \cite{AM2}, we obtain the following integral equation system of the total acoustic field $u^{(i)}$:
\begin{align}\label{ls-multi}
    u^{(i)} + \sum_{i=1}^\mathrm{M}\beta_i\int_{\mathbb R}\int_{\Omega_i} \mathrm{G}^{\textcolor{black}{(m)}}(\mathrm{x}-\mathrm{y};\mathrm{t}-\tau) u^{(i)}_{\mathrm{t}\mathrm{t}}\ d\mathrm{y}d\tau - \sum_{i=1}^\mathrm{M}\alpha_i\;\text{div}\int_{\mathbb R}\int_{\Omega_i} \mathrm{G}^{\textcolor{black}{(m)}}(\mathrm{x}-\mathrm{y};\mathrm{t}-\tau)\nabla u^{(i)} \ d\mathrm{y}d\tau = u^\textbf{in},
\end{align}
where $\beta_i := \mathrm{k}_\mathrm{c}^{-1}- \mathrm{k}_\mathrm{m}^{-1}\; \text{and}\; \alpha_i: = \rho_\mathrm{c}^{-1} - \rho_\mathrm{m}^{-1} $ are the contrasts between the inner and the outer acoustic coefficients, respectively.
\newline
By utilizing integration by parts and the fundamental principles of retarded acoustic layer potentials, it is feasible to convert the aforementioned integro-differential equation into an exclusively integral equation as follows: (for more in-depth computations, we refer to \cite{AM2})
\begin{align}\label{system}
    \Big(1+\frac{\alpha_i\rho_\mathrm{m}}{2}\Big)\partial_\nu u^{(i)} + \sum_{i=1}^\mathrm{M}\gamma_i \partial_\nu\bm{\mathrm{V}}_\Omega\Big[u^{(i)}_{\mathrm{t}\mathrm{t}}\Big]+ \sum_{i=1}^\mathrm{M}\alpha_i \bm{\mathcal{K}}^\mathrm{t}\Big[\partial_\nu u^{(i)}\Big] = \partial_\nu u^\textbf{in}\quad \text{on}\; \partial\Omega_i,
\end{align}
which can be reformulated as:
\begin{align}\label{ls1-multi}
    \Big(1+\frac{\alpha_i\rho_\mathrm{m}}{2}\Big)\partial_\nu u^{(i)} +\gamma_i \partial_\nu\bm{\mathrm{V}}^{(ii)}\Big[u^{(i)}_{\mathrm{t}\mathrm{t}}\Big]+ \alpha_i \bm{\mathcal{K}}^{(ii)}\Big[\partial_\nu u^{(i)}\Big] = \partial_\nu u^{\textbf{in}}-\sum\limits_{\substack{j=1 \\ j\neq i}}^M\gamma_j \partial_\nu\bm{\mathrm{V}}^{(ij)}\Big[u^{(j)}_{\mathrm{t}\mathrm{t}}\Big] - \sum\limits_{\substack{j=1 \\ j\neq i}}^M\alpha_j \bm{\mathcal{K}}^{(ij)}\Big[\partial_\nu u^{(j)}\Big],
\end{align}\vspace{-5pt}
where we denote by $\gamma_i := \beta-\alpha_i \frac{\rho_\mathrm{c}}{\mathrm{k}_\mathrm{c}}$. Due to the scaling properties of $\rho_\mathrm{c}$ and $\mathrm{k}_\mathrm{c}$, we observe that $\gamma_i \sim 1$.
\newline

Define the operator $\bm{\mathrm{V}}^{(ij)}\Big[f^{(j)}\Big]$ by
\begin{align}\vspace{-5pt}
 \bm{\mathrm{V}}^{(ij)}\big[f^{(j)}\big](\mathrm{x},\mathrm{t}) := \int_{\Omega_j}\frac{\rho_\mathrm{m} }{4\pi|\mathrm{x}-\mathrm{y}|}f^{(j)}(\mathrm{y},\mathrm{t}-\mathrm{c}_0^{-1}|\mathrm{x}-\mathrm{y}|)d\mathrm{y}, \quad (\mathrm{x},\mathrm{t}) \in \partial\Omega_i \times(0,\mathrm{T}),   
\end{align}
and for $(\mathrm{x},\mathrm{t}) \in \partial\Omega_i \times(0,\mathrm{T})$ by
\begin{align}
    &\nonumber\bm{\mathcal{K}}^{(ij)}\Big[f^{(j)}\Big](\mathrm{x},\mathrm{t}) \\ \nonumber&:= -\rho_\mathrm{m}\Big[\mathrm{c}_0^{-1}\int_{\partial \Omega_j} \frac{f^{(j)}_\mathrm{t}(\mathrm{y}, \mathrm{t}- \mathrm{c}_0^{-1}|\mathrm{x}-\mathrm{y}|)}{4\pi|\mathrm{x}-\mathrm{y}|}\frac{(\mathrm{x}-\mathrm{y})\cdot\nu_\mathrm{x}}{|\mathrm{x}-\mathrm{y}|}d\sigma_\mathrm{y} + \int_{\partial \Omega_j} \frac{f^{(j)}(\mathrm{y}, \mathrm{t}- \mathrm{c}_0^{-1}|\mathrm{x}-\mathrm{y}|)}{4\pi|\mathrm{x}-\mathrm{y}|}\frac{(\mathrm{x}-\mathrm{y})\cdot\nu_\mathrm{x}}{|\mathrm{x}-\mathrm{y}|^2}d\sigma_\mathrm{y}\Big].
\end{align}

%-----------------------------------------------------------------------------------
%-----------------------------------------------------------------------------------

\subsection{Estimating the Dominating Component of the Acoustic Pressure Field}\label{dominant-fields}

In order to proceed with the subsequent steps, it is essential to attain higher degrees of temporal regularity for the solution $u$ to (\ref{math-modelmulti}). For this purpose, we refer to \cite{AM2} for a detailed discussion on this matter. In brief, we consider a function $\lambda \in \mathrm{C}^9(\mathbb R)$ with its support in the positive real line. It was demonstrated in the aforementioned reference that $u \in \mathrm{H}^8_{0,\sigma}\big(0,\mathrm{T};\mathrm{H}^{1}(\mathbb{R}^3)\big)$ and consequently in $\mathcal{C}^7\big(0,\mathrm{T};\mathrm{H}^{1}(\mathbb{R}^3)\big)$. Moreover, it was shown that $\partial_\nu u^{(i)}$ belongs to $\mathrm{H}^6_{0,\sigma}\big(0,\mathrm{T};\mathrm{H}^{-\frac{1}{2}}(\partial\Omega_i)\big)$ and eventually, $\partial_\nu u^{(i)}\in \mathcal{C}^5\big(0,\mathrm{T};\mathrm{H}^{-\frac{1}{2}}(\partial\Omega_i)\big)$.\par
\bigskip
Subsequently, we start with integrating the expression (\ref{ls1-multi}) with respect to $\partial\Omega_i$, thereby yielding the following equation:
\begin{align}\label{aprmulti}
    \Big(1+\frac{\alpha_i\rho_\mathrm{m}}{2}\Big)\int_{\partial\Omega_i}\partial_\nu u^{(i)} +\gamma_i\int_{\partial\Omega_i} \partial_\nu\bm{\mathrm{V}}^{(ii)}\Big[u^{(i)}_{\mathrm{t}\mathrm{t}}\Big]+ \alpha_i \int_{\partial\Omega_i}\bm{\mathcal{K}}^{(ii)}\Big[\partial_\nu u^{(i)}\Big] \nonumber &= \int_{\partial\Omega_i}\partial_\nu u^{\textbf{in}}-\sum\limits_{\substack{j=1 \\ j\neq i}}^M\gamma_j \int_{\partial\Omega_i}\partial_\nu\bm{\mathrm{V}}^{(ij)}\Big[u^{(j)}_{\mathrm{t}\mathrm{t}}\Big] \\ &- \sum\limits_{\substack{j=1 \\ j\neq i}}^M\alpha_j \int_{\partial\Omega_i}\bm{\mathcal{K}}^{(ij)}\Big[\partial_\nu u^{(j)}\Big].
\end{align}
Using Taylor's expansion with respect to the time variable, the following expression can be obtained, which gives the value of the operator $\bm{\mathcal{K}}^{(ii)}$ acting on the function $\partial_\nu u^{(i)}$ as follows:
\begin{align}\label{ap1multi}
  \bm{\mathcal{K}}^{(ii)}\Big[\partial_\nu u^{(i)}\Big] 
    &=\nonumber\rho_\mathrm{m}\Bigg[ -\cancel{\mathrm{c}^{-1}_0\int_{\partial\Omega_i} \partial_\mathrm{t}\partial_\nu u^{(i)}(\mathrm{y}, \mathrm{t}) \frac{(\mathrm{x}-\mathrm{y}).\nu_\mathrm{x}}{|\mathrm{x}-\mathrm{y}|^2}d\sigma_\mathrm{x}d\sigma_\mathrm{y}}
    + \mathrm{c}^{-2}_0\int_{\partial\Omega_i}\partial_\mathrm{t}^2\partial_\nu u^{(i)}(\mathrm{y}, \mathrm{t}) \frac{(\mathrm{x}-\mathrm{y})\cdot\nu_\mathrm{x}}{|\mathrm{x}-\mathrm{y}|}d\sigma_\mathrm{y} 
    \\ \nonumber &- \mathrm{c}^{-3}_0\int_{\partial\Omega_i}\partial_\mathrm{t}^3\partial_\nu u^{(i)}(\mathrm{y}, \mathrm{t}_1) (\mathrm{x}-\mathrm{y})\cdot\nu_\mathrm{x} d\sigma_\mathrm{x} 
    - \int_{\partial\Omega_i}\partial_\nu u^{(i)}(\mathrm{y}, \mathrm{t}) \frac{(\mathrm{x}-\mathrm{y})\cdot\nu_\mathrm{x}}{|\mathrm{x}-\mathrm{y}|^3}d\sigma_\mathrm{y}
    \\ \nonumber &+ \cancel{\mathrm{c}^{-1}_0\int_{\partial\Omega_i} \partial_\mathrm{t}\partial_\nu u^{(i)}(\mathrm{y}, \mathrm{t}) \frac{(\mathrm{x}-\mathrm{y})\cdot\nu_\mathrm{x}}{|\mathrm{x}-\mathrm{y}|^2}d\sigma_\mathrm{x}d\sigma_\mathrm{y}} 
    - \frac{1}{2} \mathrm{c}^{-2}_0\int_{\partial\Omega_i}\partial_\mathrm{t}^2\partial_\nu u^{(i)}(\mathrm{y}, \mathrm{t}) \frac{(\mathrm{x}-\mathrm{y})\cdot\nu_\mathrm{x}}{|\mathrm{x}-\mathrm{y}|}d\sigma_\mathrm{y}  
    \\ &+ \frac{1}{3!}\mathrm{c}^{-3}_0\int_{\partial\Omega_i}\partial_\mathrm{t}^3\partial_\nu u^{(i)}(\mathrm{y}, \mathrm{t}_2) (\mathrm{x}-\mathrm{y})\cdot\nu_\mathrm{x} d\sigma_\mathrm{y}\Bigg],
\end{align}
where $\mathrm{t}_1, \mathrm{t}_2 \in \big(\mathrm{t}-\mathrm{c}^{-1}_0|\mathrm{x}-\mathrm{y}|,\mathrm{t}\big)$ and we denote $\mathrm{t}_{\mathrm{m}_1} := \max(\mathrm{t}_1, \mathrm{t}_2)$ and similarly we define $\mathrm{t}_{\mathrm{m}_2}$.
\newline
This expression involves multiple integrals over the boundary $\partial\Omega_i$ and has a complex structure. Applying further mathematical simplifications, we can obtain a more concise form of the expression
\begin{align}\label{fir1}
   \bm{\mathcal{K}}^{(ii)}\Big[\partial_\nu u^{(i)}\Big] 
    =\nonumber\rho_\mathrm{m}\Bigg[ \frac{1}{2} \mathrm{c}^{-2}_0\int_{\partial\Omega_i}\ \frac{(\mathrm{x}-\mathrm{y})\cdot\nu_\mathrm{x}}{|\mathrm{x}-\mathrm{y}|}\partial_\mathrm{t}^2\partial_\nu u^{(i)}(\mathrm{y}, \mathrm{t}) d\sigma_\mathrm{y} \nonumber&- \int_{\partial\Omega_i}\frac{(\mathrm{x}-\mathrm{y})\cdot\nu_\mathrm{x}}{|\mathrm{x}-\mathrm{y}|^3}\partial_\nu u^{(i)}(\mathrm{y}, \mathrm{t}) d\sigma_\mathrm{y} \\ &+ \frac{5}{6}\mathrm{c}^{-3}_0\int_{\partial\Omega_i} (\mathrm{x}-\mathrm{y})\cdot\nu_\mathrm{x}\partial_\mathrm{t}^3\partial_\nu u^{(i)}(\mathrm{y}, \mathrm{t}_{\mathrm{m}_1}) d\sigma_\mathrm{y} \Bigg].  
\end{align}
Upon performing the integration with respect to $\partial\Omega_i$ for the expression (\ref{fir1}), we obtain the following equations:
\begin{align}\label{apr1multi}
   \int_{\partial\Omega_i}\bm{\mathcal{K}}^{(ii)}\Big[\partial_\nu u^{(i)}\Big] 
    &=\nonumber\rho_\mathrm{m}\Bigg[ \frac{1}{2} \mathrm{c}^{-2}_0\int_{\partial\Omega_i}\partial_\mathrm{t}^2\partial_\nu u^{(i)}(\mathrm{y}, \mathrm{t})\int_{\partial\Omega_i}\ \frac{(\mathrm{x}-\mathrm{y})\cdot\nu_\mathrm{x}}{|\mathrm{x}-\mathrm{y}|} d\sigma_\mathrm{y} - \int_{\partial\Omega_i}\partial_\nu u^{(i)}(\mathrm{y}, \mathrm{t})\int_{\partial\Omega_i}\frac{(\mathrm{x}-\mathrm{y})\cdot\nu_\mathrm{x}}{|\mathrm{x}-\mathrm{y}|^3} d\sigma_\mathrm{y} \\ &+ \frac{5}{6}\mathrm{c}^{-3}_0\int_{\partial\Omega_i}\partial_\mathrm{t}^3\partial_\nu u^{(i)}(\mathrm{y}, \mathrm{t}_{\mathrm{m}_1})\int_{\partial\Omega_i} (\mathrm{x}-\mathrm{y})\cdot\nu_\mathrm{x} d\sigma_\mathrm{y} \Bigg].  
\end{align}
Moreover, we infer that:
\begin{align}\label{sec1}
\partial_\nu\bm{\mathrm{V}}^{(ii)}\Big[u^{(i)}_{\mathrm{t}\mathrm{t}}\Big] &:= \nonumber \partial_\nu\int_\Omega\frac{\rho_\mathrm{m}}{4\pi|\mathrm{x}-\mathrm{y}|}\partial_\mathrm{t}^2u^{(i)}(\mathrm{y},\mathrm{t}-\mathrm{c}^{-1}_0|\mathrm{x}-\mathrm{y}|)d\mathrm{y}
\\ \nonumber &= \rho_\mathrm{m}\Bigg[\partial_\nu\int_\Omega\frac{\partial_\mathrm{t}^2u^{(i)}(\mathrm{y},\mathrm{t})}{4\pi|\mathrm{x}-\mathrm{y}|}d\mathrm{y} - \underbrace{\partial_\nu\int_\Omega\frac{\partial_\mathrm{t}^{3}u(\mathrm{y},\mathrm{t})}{4\pi|\mathrm{x}-\mathrm{y}|}\mathrm{c}^{-1}_0|\mathrm{x}-\mathrm{y}|d\mathrm{y}}_{=0} + \partial_\nu\int_\Omega\frac{\partial_\mathrm{t}^{4}u^{(i)}(\mathrm{y},\mathrm{t}_3)}{4\pi|\mathrm{x}-\mathrm{y}|}\mathrm{c}^{-2}_0|\mathrm{x}-\mathrm{y}|^2d\mathrm{y}\Bigg]
\\ &= \rho_\mathrm{m}\Bigg[\partial_\nu\int_\Omega\frac{\partial_\mathrm{t}^2u^{(i)}(\mathrm{y},\mathrm{t})}{4\pi|\mathrm{x}-\mathrm{y}|}d\mathrm{y} + \partial_\nu\int_\Omega\frac{\partial_\mathrm{t}^{4}u^{(i)}(\mathrm{y},\mathrm{t}_3)}{4\pi|\mathrm{x}-\mathrm{y}|}\mathrm{c}^{-2}_0|\mathrm{x}-\mathrm{y}|^2d\mathrm{y}\Bigg],
\end{align}
where $\mathrm{t}_3 \in \big(\mathrm{t}-\mathrm{c}^{-1}_0|\mathrm{x}-\mathrm{y}|,\mathrm{t}\big)$.
\newline
After taking the integration with respect to $\partial\Omega_i$ for (\ref{sec1}), we get
\begin{align}\label{apr3}
\int_{\partial\Omega_i}\partial_\nu\bm{\mathrm{V}}^{(ii)}\Big[u^{(i)}_{\mathrm{t}\mathrm{t}}\Big] \nonumber&= \rho_\mathrm{m}\int_{\partial\Omega_i}\partial_\nu\int_{\Omega_i}\frac{\partial_\mathrm{t}^2u^{(i)}(\mathrm{y},\mathrm{t})}{4\pi|\mathrm{x}-\mathrm{y}|}d\mathrm{y}d\sigma_\mathrm{x} + \rho_\mathrm{m}\int_{\partial\Omega_i}\partial_\nu\int_{\Omega_i}\frac{\mathrm{c}^{-2}_0}{4\pi}|\mathrm{x}-\mathrm{y}|\partial_\mathrm{t}^{4}u^{(i)}(\mathrm{y},\mathrm{t}_3)d\mathrm{y}
\\ \nonumber&= \rho_\mathrm{m}\int_{\Omega_i}\Delta\int_{\Omega_i}\frac{\partial_\mathrm{t}^2u^{(i)}(\mathrm{y},\mathrm{t})}{4\pi|\mathrm{x}-\mathrm{y}|}d\mathrm{y}d\mathrm{x} + \rho_\mathrm{m}\int_{\partial\Omega_i}\partial_\nu\int_{\Omega_i}\frac{\mathrm{c}^{-2}_0}{4\pi}|\mathrm{x}-\mathrm{y}|\partial_\mathrm{t}^{4}u^{(i)}(\mathrm{y},\mathrm{t}_3)d\mathrm{y}
\\ \nonumber&= \rho_\mathrm{m}\int_{\Omega_i}\Delta\bm{\mathcal{N}_{\Omega_i,\textbf{lap}}}\Big[\partial_\mathrm{t}^2u^{(i)}\Big]d\mathrm{x} + \rho_\mathrm{m}\int_{\partial\Omega_i}\partial_\nu\int_{\Omega_i}\frac{\mathrm{c}^{-2}_0}{4\pi}|\mathrm{x}-\mathrm{y}|\partial_\mathrm{t}^{4}u^{(i)}(\mathrm{y},\mathrm{t}_3)d\mathrm{y}
\\ &= \rho_\mathrm{m}\frac{\mathrm{k}_\mathrm{c}}{\rho_\mathrm{c}}\int_{\partial\Omega_i}\partial_\nu u^{(i)}(\mathrm{y},\mathrm{t})d\sigma_\mathrm{y} + \rho_\mathrm{m}\int_{\partial\Omega_i}\partial_\nu\int_{\Omega_i}\frac{\mathrm{c}^{-2}_0}{4\pi}|\mathrm{x}-\mathrm{y}|\partial_\mathrm{t}^{4}u^{(i)}(\mathrm{y},\mathrm{t}_3)d\mathrm{y}.
\end{align}
We then give a close look on the following expression for $(\mathrm{x},\mathrm{t}) \in \partial\Omega_i \times(0,\mathrm{T})$
\begin{align}
    \sum\limits_{\substack{j=1 \\ j\neq i}}^M\alpha_j\int_{\partial\Omega_i}\bm{\mathcal{K}}^{(ij)}\Big[\partial_\nu u^{(j)}\Big](\mathrm{x},\mathrm{t})d\sigma_\mathrm{x} \nonumber&= -\frac{\rho_\mathrm{m}}{\mathrm{c}_0}\sum\limits_{\substack{j=1 \\ j\neq i}}^M\alpha_j\int_{\partial\Omega_i}\int_{\partial \Omega_j} \frac{\partial_\mathrm{t}\partial_\nu u^{(j)}(\mathrm{y}, \mathrm{t}- \mathrm{c}_0^{-1}|\mathrm{x}-\mathrm{y}|)}{4\pi|\mathrm{x}-\mathrm{y}|}\frac{(\mathrm{x}-\mathrm{y})\cdot\nu_\mathrm{x}}{|\mathrm{x}-\mathrm{y}|}d\sigma_\mathrm{y} d\sigma_\mathrm{x}
    \\ &-\textcolor{black}{{\rho_m}}\sum\limits_{\substack{j=1 \\ j\neq i}}^M\alpha_j \int_{\partial \Omega_j}\int_{\partial \Omega_j} \frac{\partial_\nu u^{(j)}(\mathrm{y}, \mathrm{t}- \mathrm{c}_0^{-1}|\mathrm{x}-\mathrm{y}|)}{4\pi|\mathrm{x}-\mathrm{y}|}\frac{(\mathrm{x}-\mathrm{y})\cdot\nu_\mathrm{x}}{|\mathrm{x}-\mathrm{y}|^2}d\sigma_\mathrm{y}d\sigma_\mathrm{x}.
\end{align}
Next, we first consider the first term of the above expression and we do the following using Taylor's expansion:
\begin{align}\label{firstestimate}
    \mathrm{S}_1 \nonumber&:= \frac{\rho_\mathrm{m}}{4\pi\mathrm{c}_0}\sum\limits_{\substack{j=1 \\ j\neq i}}^M\alpha_j\int_{\partial\Omega_i}\int_{\partial \Omega_j} \partial_\mathrm{t}\partial_\nu u^{(j)}(\mathrm{y}, \mathrm{t}- \mathrm{c}_0^{-1}|\mathrm{x}-\mathrm{y}|)\frac{(\mathrm{x}-\mathrm{y})\cdot\nu_\mathrm{x}}{|\mathrm{x}-\mathrm{y}|^2}d\sigma_\mathrm{y}d\sigma_\mathrm{x}
    \\ \nonumber&= \frac{\rho_\mathrm{m}}{4\pi}\mathrm{c}_0^{-1}\sum\limits_{\substack{j=1 \\ j\neq i}}^M\alpha_j\int_{\partial\Omega_i}\int_{\partial \Omega_j} \partial_\mathrm{t}\partial_\nu u^{(j)}(\mathrm{y}, \mathrm{t}- \mathrm{c}_0^{-1}|\mathrm{z}_i-\mathrm{z}_j|)\frac{(\mathrm{x}-\mathrm{y})\cdot\nu_\mathrm{x}}{|\mathrm{x}-\mathrm{y}|^2}d\sigma_\mathrm{y}d\sigma_\mathrm{x}
    \\ \nonumber&+ \frac{\rho_\mathrm{m}}{4\pi}\mathrm{c}_0^{-2}\sum\limits_{\substack{j=1 \\ j\neq i}}^M\alpha_j\int_{\partial\Omega_i}\int_{\partial \Omega_j} \partial_\mathrm{t}\partial_\nu u^{(j)}\big(\mathrm{y}, \mathrm{t}- \mathrm{c}_0^{-1}|\mathrm{z}_i-\mathrm{z}_j|\big)|\mathrm{z}_i-\mathrm{z}_j|\frac{(\mathrm{x}-\mathrm{y})\cdot\nu_\mathrm{x}}{|\mathrm{x}-\mathrm{y}|^2}d\sigma_\mathrm{y}d\sigma_\mathrm{x}
    \\ \nonumber&- \frac{\rho_\mathrm{m}}{4\pi}\mathrm{c}_0^{-2}\sum\limits_{\substack{j=1 \\ j\neq i}}^M\alpha_j\int_{\partial\Omega_i}\int_{\partial \Omega_j} \partial_\mathrm{t}\partial_\nu u^{(j)}\big(\mathrm{y}, \mathrm{t}- \mathrm{c}_0^{-1}|\mathrm{z}_i-\mathrm{z}_j|\big)\frac{(\mathrm{x}-\mathrm{y})\cdot\nu_\mathrm{x}}{|\mathrm{x}-\mathrm{y}|}d\sigma_\mathrm{y}d\sigma_\mathrm{x}
    \\ \nonumber&+ \frac{\rho_\mathrm{m}}{4\pi}\frac{\mathrm{c}_0^{-3}}{2}\sum\limits_{\substack{j=1 \\ j\neq i}}^M\alpha_j\int_{\partial\Omega_i}\int_{\partial \Omega_j} \partial_\mathrm{t}\partial_\nu u^{(j)}(\mathrm{y}, \eta_1)\;|\mathrm{z}_i-\mathrm{z}_j|^2\;\frac{(\mathrm{x}-\mathrm{y})\cdot\nu_\mathrm{x}}{|\mathrm{x}-\mathrm{y}|^2}d\sigma_\mathrm{y}d\sigma_\mathrm{x}
    \\ \nonumber&- \frac{\rho_\mathrm{m}}{4\pi}\mathrm{c}_0^{-3}\sum\limits_{\substack{j=1 \\ j\neq i}}^M\alpha_j\int_{\partial\Omega_i}\int_{\partial \Omega_j} \partial_\mathrm{t}\partial_\nu u^{(j)}\big(\mathrm{y}, \eta_1\big)\;|\mathrm{z}_i-\mathrm{z}_j|\;\frac{(\mathrm{x}-\mathrm{y})\cdot\nu_\mathrm{x}}{|\mathrm{x}-\mathrm{y}|}d\sigma_\mathrm{y}d\sigma_\mathrm{x}
    \\ &+ \frac{\rho_\mathrm{m}}{4\pi}\frac{\mathrm{c}_0^{-3}}{2}\sum\limits_{\substack{j=1 \\ j\neq i}}^M\alpha_j\int_{\partial\Omega_i}\int_{\partial \Omega_j} \partial^3_\mathrm{t}\partial_\nu u^{(j)}\big(\mathrm{y}, \eta_1\big)\;(\mathrm{x}-\mathrm{y})\cdot\nu_\mathrm{x}\;d\sigma_\mathrm{y}d\sigma_\mathrm{x},
\end{align}
where $\eta_1 \in \big(\mathrm{t}- \mathrm{c}_0^{-1}|\mathrm{x}-\mathrm{y}|,\mathrm{t}- \mathrm{c}_0^{-1}|\mathrm{z}_i-\mathrm{z}_j|\big).$\par
Following that, we consider the second term and do the following:
\begin{align}\label{secondestimate}
    \mathrm{S}_2 \nonumber&:= \frac{\rho_\mathrm{m}}{4\pi}\sum\limits_{\substack{j=1 \\ j\neq i}}^M\alpha_j\int_{\partial\Omega_i}\int_{\partial \Omega_j} \partial_\nu u^{(j)}(\mathrm{y}, \mathrm{t}- \mathrm{c}_0^{-1}|\mathrm{x}-\mathrm{y}|)\frac{(\mathrm{x}-\mathrm{y})\cdot\nu_\mathrm{x}}{|\mathrm{x}-\mathrm{y}|^3}d\sigma_\mathrm{y}d\sigma_\mathrm{x}
    \\ \nonumber&= \underbrace{\frac{\rho_\mathrm{m}}{4\pi}\sum\limits_{\substack{j=1 \\ j\neq i}}^M\alpha_j\int_{\partial\Omega_i}\int_{\partial \Omega_j} \partial_\nu u^{(j)}(\mathrm{y}, \mathrm{t}- \mathrm{c}_0^{-1}|\mathrm{z}_i-\mathrm{z}_j|)\frac{(\mathrm{x}-\mathrm{y})\cdot\nu_\mathrm{x}}{|\mathrm{x}-\mathrm{y}|^3}d\sigma_\mathrm{y}d\sigma_\mathrm{x}}_{=\;0}
    \\ \nonumber&+ \underbrace{\frac{\rho_\mathrm{m}}{4\pi}\mathrm{c}_0^{-1}\sum\limits_{\substack{j=1 \\ j\neq i}}^M\alpha_j\int_{\partial\Omega_i}\int_{\partial \Omega_j} \partial_\mathrm{t}\partial_\nu u^{(j)}\big(\mathrm{y}, \mathrm{t}- \mathrm{c}_0^{-1}|\mathrm{z}_i-\mathrm{z}_j|\big)\;|\mathrm{z}_i-\mathrm{z}_j|\;\frac{(\mathrm{x}-\mathrm{y})\cdot\nu_\mathrm{x}}{|\mathrm{x}-\mathrm{y}|^3}d\sigma_\mathrm{y}d\sigma_\mathrm{x}}_{=\; 0}
    \\ \nonumber&- \frac{\rho_\mathrm{m}}{4\pi}\mathrm{c}_0^{-1}\sum\limits_{\substack{j=1 \\ j\neq i}}^M\alpha_j\int_{\partial\Omega_i}\int_{\partial \Omega_j} \partial_\mathrm{t}\partial_\nu u^{(j)}\big(\mathrm{y}, \mathrm{t}- \mathrm{c}_0^{-1}|\mathrm{z}_i-\mathrm{z}_j|\big)\frac{(\mathrm{x}-\mathrm{y})\cdot\nu_\mathrm{x}}{|\mathrm{x}-\mathrm{y}|^2}d\sigma_\mathrm{y}d\sigma_\mathrm{x}
    \\ \nonumber&+ \underbrace{\frac{\rho_\mathrm{m}}{4\pi}\frac{\mathrm{c}_0^{-2}}{2}\sum\limits_{\substack{j=1 \\ j\neq i}}^M\alpha_j\int_{\partial\Omega_i}\int_{\partial \Omega_j} \partial^2_\mathrm{t}\partial_\nu u^{(j)}(\mathrm{y}, \mathrm{t}- \mathrm{c}_0^{-1}|\mathrm{z}_i-\mathrm{z}_j|)\;|\mathrm{z}_i-\mathrm{z}_j|^2\;\frac{(\mathrm{x}-\mathrm{y})\cdot\nu_\mathrm{x}}{|\mathrm{x}-\mathrm{y}|^3}d\sigma_\mathrm{y}d\sigma_\mathrm{x}}_{=\; 0}
    \\ \nonumber&- \frac{\rho_\mathrm{m}}{4\pi}\mathrm{c}_0^{-2}\sum\limits_{\substack{j=1 \\ j\neq i}}^M\alpha_j\int_{\partial\Omega_i}\int_{\partial \Omega_j} \partial^2_\mathrm{t}\partial_\nu u^{(j)}\big(\mathrm{y}, \mathrm{t}- \mathrm{c}_0^{-1}|\mathrm{z}_i-\mathrm{z}_j|\big)\;|\mathrm{z}_i-\mathrm{z}_j|\;\frac{(\mathrm{x}-\mathrm{y})\cdot\nu_\mathrm{x}}{|\mathrm{x}-\mathrm{y}|^2}d\sigma_\mathrm{y}d\sigma_\mathrm{x}
    \\ \nonumber&+ \frac{\rho_\mathrm{m}}{4\pi}\frac{\mathrm{c}_0^{-2}}{2}\sum\limits_{\substack{j=1 \\ j\neq i}}^M\alpha_j\int_{\partial\Omega_i}\int_{\partial \Omega_j} \partial^2_\mathrm{t}\partial_\nu u^{(j)}\big(\mathrm{y}, \mathrm{t}- \mathrm{c}_0^{-1}|\mathrm{z}_i-\mathrm{z}_j|\big)\;\frac{(\mathrm{x}-\mathrm{y})\cdot\nu_\mathrm{x}}{|\mathrm{x}-\mathrm{y}|}\;d\sigma_\mathrm{y}d\sigma_\mathrm{x}
    \\ \nonumber&+ \underbrace{\frac{\rho_\mathrm{m}}{4\pi}\frac{\mathrm{c}_0^{-3}}{3!}\sum\limits_{\substack{j=1 \\ j\neq i}}^M\alpha_j\int_{\partial\Omega_i}\int_{\partial \Omega_j} \partial^3_\mathrm{t}\partial_\nu u^{(j)}(\mathrm{y}, \eta_2)\;|\mathrm{z}_i-\mathrm{z}_j|^3\;\frac{(\mathrm{x}-\mathrm{y})\cdot\nu_\mathrm{x}}{|\mathrm{x}-\mathrm{y}|^3}d\sigma_\mathrm{y}d\sigma_\mathrm{x}}_{=\; 0}
    \\ \nonumber&- \frac{\rho_\mathrm{m}}{4\pi}\frac{\mathrm{c}_0^{-3}}{2}\sum\limits_{\substack{j=1 \\ j\neq i}}^M\alpha_j\int_{\partial\Omega_i}\int_{\partial \Omega_j} \partial^2_\mathrm{t}\partial_\nu u^{(j)}\big(\mathrm{y}, \eta_2\big)\;|\mathrm{z}_i-\mathrm{z}_j|^2\;\frac{(\mathrm{x}-\mathrm{y})\cdot\nu_\mathrm{x}}{|\mathrm{x}-\mathrm{y}|^2}d\sigma_\mathrm{y}d\sigma_\mathrm{x}
    \\ \nonumber&+ \frac{\rho_\mathrm{m}}{4\pi}\frac{\mathrm{c}_0^{-3}}{2}\sum\limits_{\substack{j=1 \\ j\neq i}}^M\alpha_j\int_{\partial\Omega_i}\int_{\partial \Omega_j} \partial^2_\mathrm{t}\partial_\nu u^{(j)}\big(\mathrm{y}, \eta_2\big)\;|\mathrm{z}_i-\mathrm{z}_j|\;\frac{(\mathrm{x}-\mathrm{y})\cdot\nu_\mathrm{x}}{|\mathrm{x}-\mathrm{y}|}\;d\sigma_\mathrm{y}d\sigma_\mathrm{x}
    \\ &+ \frac{\rho_\mathrm{m}}{4\pi}\frac{\mathrm{c}_0^{-3}}{3!}\sum\limits_{\substack{j=1 \\ j\neq i}}^M\alpha_j\int_{\partial\Omega_i}\int_{\partial \Omega_j} \partial^2_\mathrm{t}\partial_\nu u^{(j)}\big(\mathrm{y}, \eta_2\big)\;(\mathrm{x}-\mathrm{y})\cdot\nu_\mathrm{x}\;d\sigma_\mathrm{y}d\sigma_\mathrm{x},
\end{align}
where $\eta_2 \in \big(\mathrm{t}- \mathrm{c}_0^{-1}|\mathrm{x}-\mathrm{y}|,\mathrm{t}- \mathrm{c}_0^{-1}|\mathrm{z}_i-\mathrm{z}_j|\big).$
\newline
The reason behind taking the above terms to be zero is that for $\mathrm{x}\in \partial \Omega_i$, we have:
\begin{align}
 \int_{\partial\Omega_i}\int_{\partial\Omega_j}\frac{(\mathrm{x}-\mathrm{y})\cdot\nu_\mathrm{x}}{|\mathrm{x}-\mathrm{y}|^3}\partial_\nu u^{(j)}(\mathrm{y}, \mathrm{t}-\mathrm{c}_0^{-1}|\mathrm{z}_i-\mathrm{z}_j|) d\sigma_\mathrm{y}d\sigma_\mathrm{x} =  \Big\langle \bm{\mathcal{K}}^{*^{(ij)}}_{\textbf{lap}} \Big[\partial_\nu u^{(j)}\Big]; 1\Big\rangle_{\partial\Omega_i}
 =  \Big\langle\partial_\nu u^{(j)};\underbrace{\bm{\mathcal{K}}^{(ij)}_{\textbf{lap}} \Big[1\Big]}_{=0}\Big\rangle = 0.
\end{align}
Therefore, from the estimates (\ref{firstestimate}) and (\ref{secondestimate}), we derive that
\begin{align}\label{result}
    \sum\limits_{\substack{j=1 \\ j\neq i}}^M\alpha_j\int_{\partial\Omega_i}\bm{\mathcal{K}}^{(ij)}\Big[\partial_\nu u^{(j)}\Big](\mathrm{x},\mathrm{t})d\sigma_\mathrm{x} \nonumber&= \frac{\rho_\mathrm{m}}{4\pi}\frac{\mathrm{c}_0^{-2}}{2}\sum\limits_{\substack{j=1 \\ j\neq i}}^M\alpha_j\int_{\partial\Omega_i}\int_{\partial \Omega_j} \partial^2_\mathrm{t}\partial_\nu u^{(j)}\big(\mathrm{y}, \mathrm{t}- \mathrm{c}_0^{-1}|\mathrm{z}_i-\mathrm{z}_j|\big)\;\frac{(\mathrm{x}-\mathrm{y})\cdot\nu_\mathrm{x}}{|\mathrm{x}-\mathrm{y}|}\;d\sigma_\mathrm{y}d\sigma_\mathrm{x} 
    \\ &- \frac{\rho_\mathrm{m}}{4\pi}\frac{\mathrm{c}_0^{-2}}{2}\sum\limits_{\substack{j=1 \\ j\neq i}}^M\alpha_j\int_{\partial\Omega_i}\int_{\partial \Omega_j} \partial^3_\mathrm{t}\partial_\nu u^{(j)}\big(\mathrm{y}, \eta\big)\;(\mathrm{x}-\mathrm{y})\cdot\nu_\mathrm{x}\;d\sigma_\mathrm{y}d\sigma_\mathrm{x},
\end{align}
where $\eta = \max(\eta_1,\eta_2).$\par
We state the following two Propositions that will be useful to estimate the reminder part of the previous expression.
\begin{proposition}\label{p1}
For $u^{(i)} = u^\textbf{in}+u^\mathrm{s}$ as the solution of (\ref{math-modelmulti}) we have the following estimates
\begin{align}
\Vert \partial_\mathrm{t}^\mathrm{n}u^{(i)}(\cdot,\mathrm{t})\Vert_{\mathrm{L}^2(\Omega_i)} \lesssim \delta^{\frac{3}{2}}, \quad \mathrm{t}\in [0,\mathrm{T}], \quad \mathrm{n}=0,1,\ldots.
\end{align}
\end{proposition}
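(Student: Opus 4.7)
My plan is to establish the estimate through a transmission-type energy argument for the scattered field $u^{\mathrm{s}} := u-u^{\textbf{in}}$. Since $u^{\textbf{in}}$ and its time derivatives, up to the order permitted by $\lambda \in \mathcal{C}^9$, are uniformly bounded on any compact neighborhood of $\cup_i\Omega_i$, one has directly
\[
\Vert\partial_\mathrm{t}^\mathrm{n}u^{\textbf{in}}(\cdot,\mathrm{t})\Vert_{\mathrm{L}^2(\Omega_i)} \leq |\Omega_i|^{1/2}\,\Vert\partial_\mathrm{t}^\mathrm{n}u^{\textbf{in}}\Vert_{\mathrm{L}^\infty} \lesssim \delta^{3/2},
\]
so the real task is to prove the same bound for $u^{\mathrm{s}}$. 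A straightforward calculation shows that $u^{\mathrm{s}}$ solves the homogeneous wave equation outside the bubbles, the sourced interior equation $\mathrm{k}_\mathrm{c}^{-1}u^{\mathrm{s}}_{\mathrm{t}\mathrm{t}}-\rho_\mathrm{c}^{-1}\Delta u^{\mathrm{s}}=-\mathrm{k}_\mathrm{c}^{-1}u^{\textbf{in}}_{\mathrm{t}\mathrm{t}}+\rho_\mathrm{c}^{-1}\Delta u^{\textbf{in}}$ in each $\Omega_i$, and carries a jump $\rho_\mathrm{m}^{-1}\partial_\nu u^{\mathrm{s}}|_{+}-\rho_\mathrm{c}^{-1}\partial_\nu u^{\mathrm{s}}|_{-}=\alpha_i\,\partial_\nu u^{\textbf{in}}$ on $\partial\Omega_i$, with vanishing initial data.

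The natural quantity to control is the transmission energy
\[
E(\mathrm{t}) := \tfrac{1}{2}\int_{\mathbb{R}^3}\bigl[\mathrm{k}^{-1}(u^{\mathrm{s}}_\mathrm{t})^2 + \rho^{-1}|\nabla u^{\mathrm{s}}|^2\bigr]\,d\mathrm{x}.
\]
The scalings $\mathrm{k}_\mathrm{c}=\overline{\mathrm{k}}_\mathrm{c}\delta^2$ and $\rho_\mathrm{c}=\overline{\rho}_\mathrm{c}\delta^2$ are the engine of the argument: they give the weighted bounds $\Vert u^{\mathrm{s}}_\mathrm{t}\Vert_{\mathrm{L}^2(\Omega_i)}^2 \leq 2\mathrm{k}_\mathrm{c}\,E(\mathrm{t})\lesssim \delta^2 E(\mathrm{t})$ and $\Vert\nabla u^{\mathrm{s}}\Vert_{\mathrm{L}^2(\Omega_i)}^2\leq 2\rho_\mathrm{c}E(\mathrm{t})\lesssim \delta^2 E(\mathrm{t})$, so any bound of the form $E(\mathrm{t})\lesssim \delta$ will deliver $\Vert u^{\mathrm{s}}_\mathrm{t}\Vert_{\mathrm{L}^2(\Omega_i)}\lesssim \delta^{3/2}$ at once.

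Testing the PDE against $u^{\mathrm{s}}_\mathrm{t}$ and integrating by parts in each region yields
\[
\dot E(\mathrm{t}) = \sum_{i=1}^{\mathrm{M}}\int_{\Omega_i}\!\bigl(-\mathrm{k}_\mathrm{c}^{-1}u^{\textbf{in}}_{\mathrm{t}\mathrm{t}}+\rho_\mathrm{c}^{-1}\Delta u^{\textbf{in}}\bigr)u^{\mathrm{s}}_\mathrm{t}\,d\mathrm{x}\;-\;\sum_{i=1}^{\mathrm{M}}\alpha_i\int_{\partial\Omega_i}\!\partial_\nu u^{\textbf{in}}\,u^{\mathrm{s}}_\mathrm{t}\,d\sigma.
\]
In the volume term, the singular coefficients $\mathrm{k}_\mathrm{c}^{-1},\rho_\mathrm{c}^{-1}\sim\delta^{-2}$ are matched by $\Vert u^{\mathrm{s}}_\mathrm{t}\Vert_{\mathrm{L}^2(\Omega_i)}\lesssim \delta\sqrt{E}$ and $\Vert u^{\textbf{in}}_{\mathrm{t}\mathrm{t}}\Vert_{\mathrm{L}^2(\Omega_i)}\lesssim \delta^{3/2}$, producing a contribution of order $\delta^{1/2}\sqrt{E}$ per bubble. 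For the boundary term, I would apply Green's identity together with $\Delta u^{\textbf{in}}=\mathrm{c}_0^{-2}u^{\textbf{in}}_{\mathrm{t}\mathrm{t}}$ to rewrite $\int_{\partial\Omega_i}\partial_\nu u^{\textbf{in}}\,u^{\mathrm{s}}_\mathrm{t}$ as a sum of volume pieces, one of them an exact time derivative $\tfrac{d}{d\mathrm{t}}\int_{\Omega_i}\nabla u^{\textbf{in}}\cdot\nabla u^{\mathrm{s}}$ that I absorb into a modified energy $\tilde E$, the others again of order $\delta^{1/2}\sqrt{E}$ per bubble by the same weighted bounds. Summing over the $\mathrm{M}$ bubbles and invoking Gronwall on $[0,\mathrm{T}]$ then gives $\tilde E(\mathrm{t})\lesssim \delta$, whence $E(\mathrm{t})\lesssim \delta$ and $\Vert u^{\mathrm{s}}_\mathrm{t}\Vert_{\mathrm{L}^2(\Omega_i)}\lesssim \delta^{3/2}$; the bound on $u^{\mathrm{s}}$ itself follows by integrating in time from the zero initial condition.

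For $\mathrm{n}\geq 1$, since the coefficients are time-independent and $\lambda\in\mathcal{C}^9$ provides the regularity of $u^{\textbf{in}}$ needed for repeated differentiation, each $\partial_\mathrm{t}^\mathrm{n}u^{\mathrm{s}}$ solves a transmission problem of the same structure with zero initial data, and the induction closes for all $\mathrm{n}$ in the admissible range. The step I expect to be the \emph{main obstacle} is the control of the singular boundary source $\alpha_i\,\partial_\nu u^{\textbf{in}}$: a naive trace inequality on $\partial\Omega_i$ introduces $\delta$-dependent constants from rescaling to the reference bubble $\mathrm{B}_i$ which, multiplied by $\alpha_i\sim\delta^{-2}$, would ruin the estimate. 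The Green's-identity reduction to volume integrals, and the resulting cancellation exploiting $\Delta u^{\textbf{in}}=\mathrm{c}_0^{-2}u^{\textbf{in}}_{\mathrm{t}\mathrm{t}}$, is precisely the device that converts the singular surface contribution back into volume terms where the $\rho_\mathrm{c},\mathrm{k}_\mathrm{c}\sim\delta^2$ scaling provides the needed balance and yields the claimed $\delta^{3/2}$ order.
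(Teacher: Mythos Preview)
Your energy argument is sound and, as far as I can tell, correct. The derivation of the energy identity for $u^{\mathrm{s}}$, the conversion of the singular boundary source $\alpha_i\,\partial_\nu u^{\textbf{in}}$ into volume terms via Green's formula, and the absorption of the cross term $\alpha_i\int_{\Omega_i}\nabla u^{\textbf{in}}\cdot\nabla u^{\mathrm{s}}$ into a modified energy all check out; the scalings $\mathrm{k}_\mathrm{c},\rho_\mathrm{c}\sim\delta^2$ are exactly what make the balance close at order $\delta^{1/2}\sqrt{E}$. One point worth making explicit: after summing over the $\mathrm{M}$ bubbles your Gronwall step actually yields $E(\mathrm{t})\lesssim \mathrm{M}^2\delta$, hence $\Vert\partial_\mathrm{t}^\mathrm{n}u^{\mathrm{s}}\Vert_{\mathrm{L}^2(\Omega_i)}\lesssim \mathrm{M}\,\delta^{3/2}$; the constant hidden in $\lesssim$ depends on $\mathrm{M}$, which is harmless for the paper's purposes but should be stated.

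The paper itself does not give a proof here; it defers entirely to \cite[Sec.~3]{AM2}. Judging from the methodology used in the surrounding estimates (Lemma~\ref{l1} and Proposition~\ref{p3}), the approach of \cite{AM2} is most likely based on the time-domain Lippmann--Schwinger representation (\ref{ls-multi}), rescaling to the reference domain $\mathrm{B}_i$, and invoking continuity of the associated volume operators, rather than on a transmission energy identity. Your route is therefore genuinely different: it is more elementary, avoids any layer-potential machinery or spectral decomposition of the Magnetization operator, and exploits directly the contrast scaling (\ref{cond-bubblemulti}) through the weighted energy. The integral-equation route, on the other hand, fits more naturally with the rest of the paper's analysis (it reuses the same representation that drives the main asymptotic expansion) and yields slightly finer information, since it separates the contributions of $u$ and $\nabla u$ in a way that feeds directly into the later estimates for $\partial_\nu u^{(i)}$.
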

\begin{proof}
See \cite[Sec. 3]{AM2} for the proof.
\end{proof}
and
\begin{proposition}\label{p3}
We have the following estimate
\begin{align}
\Vert \partial_\mathrm{t}^\mathrm{n}\partial_\nu u^{(i)}(\cdot, \mathrm{t})\Vert_{\mathrm{H}^{-\frac{1}{2}}(\partial\Omega_i)} \sim \delta^2, \quad \mathrm{t}\in [0,\mathrm{T}], \quad \text{for}\quad \mathrm{n} =0,1,\ldots.
\end{align}
\end{proposition}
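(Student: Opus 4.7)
The proof combines the boundary integral equation \ref{ls1-multi} (for the upper bound) with Green's identity on the interior PDE (for the matching lower bound), extending the single-bubble argument of \cite{AM2} to the coupled multi-bubble system. For the upper bound I would recast \ref{ls1-multi} as $\bm{\mathcal{A}}_i[\partial_\nu u^{(i)}] = \partial_\nu u^{\textbf{in}} - \bm{\mathcal{R}}_i$, where $\bm{\mathcal{A}}_i := \bigl(1+\tfrac{\alpha_i\rho_\mathrm{m}}{2}\bigr)I + \alpha_i\bm{\mathcal{K}}^{(ii)} + \gamma_i\,\partial_\nu\bm{\mathrm{V}}^{(ii)}\!\circ\!\mathcal{T}_i$ (with $\mathcal{T}_i$ the substitution of $u_{\mathrm{t}\mathrm{t}}^{(i)}$ via its interior Green representation) and $\bm{\mathcal{R}}_i$ gathers the cross terms $(j\neq i)$. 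After rescaling $\mathrm{y}=\mathrm{z}_i+\delta\hat{\mathrm{y}}$ and using the Slobodeckij norm \ref{normd1}, the principal part of $\bm{\mathcal{A}}_i$ reduces to the static operator $(1+\tfrac{\alpha_i\rho_\mathrm{m}}{2})I - \alpha_i\rho_\mathrm{m}\,\mathcal{K}^\ast_{\partial\mathrm{B}_i,\mathrm{lap}}$, which is uniformly invertible on $\mathrm{H}^{-\frac{1}{2}}(\partial\mathrm{B}_i)$: on constants it acts as multiplication by $1$ (since $\mathcal{K}^\ast_{\mathrm{lap}}[1]=\tfrac12$), while on the mean-zero complement the spectral gap at $\tfrac12$ provides an inverse of norm $\sim(\alpha_i\rho_\mathrm{m})^{-1}\sim\delta^2$. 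The cross terms in $\bm{\mathcal{R}}_i$ are treated perturbatively: the kernels of $\bm{\mathcal{K}}^{(ij)}$ and $\partial_\nu\bm{\mathrm{V}}^{(ij)}$ are bounded by powers of $1/\mathrm{d}$, so Proposition \ref{p1} (for the volumetric contributions) together with the present estimate used inductively (for the surface contributions) and the smallness condition \ref{inversion-cond} close the $\mathrm{M}\times\mathrm{M}$ coupled system by a contraction in $\prod_i \mathrm{H}^{-\frac{1}{2}}(\partial\Omega_i)$ weighted by $\delta^{-2}$. Combining with the pointwise size of $\partial_\nu u^{\textbf{in}}$ (mean $\sim\delta$, deviation of higher order after scaling) yields $\|\partial_\mathrm{t}^\mathrm{n}\partial_\nu u^{(i)}\|_{\mathrm{H}^{-\frac{1}{2}}(\partial\Omega_i)}\lesssim \delta^2$; the temporal derivatives $\partial_\mathrm{t}^\mathrm{n}$ commute with the retarded convolutions and are controlled via $u\in\mathcal{C}^7(0,\mathrm{T};\mathrm{H}^1(\mathbb{R}^3))$ from \cite{AM2}.

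For the matching lower bound I would apply Green's identity in $\Omega_i$ together with the interior equation $\Delta u^{(i)}=\tfrac{\rho_\mathrm{c}}{\mathrm{k}_\mathrm{c}}u^{(i)}_{\mathrm{t}\mathrm{t}}$:
$$
\big\langle \partial_\mathrm{t}^\mathrm{n}\partial_\nu u^{(i)},\,1\big\rangle_{\partial\Omega_i} = \frac{\rho_\mathrm{c}}{\mathrm{k}_\mathrm{c}}\int_{\Omega_i}\partial_\mathrm{t}^{\mathrm{n}+2}u^{(i)}(\mathrm{y},\mathrm{t})\,d\mathrm{y}.
$$
The upper bound just established propagates $u^{(i)}(\mathrm{z}_i,\mathrm{t})=u^{\textbf{in}}(\mathrm{z}_i,\mathrm{t})+\mathcal{O}(\delta)$, so for a generic incident signal and generic time $\mathrm{t}$, $\partial_\mathrm{t}^{\mathrm{n}+2}u^{(i)}(\mathrm{z}_i,\mathrm{t})$ is of order one; with $\rho_\mathrm{c}/\mathrm{k}_\mathrm{c}\sim 1$ and $|\Omega_i|\sim\delta^3$, the right-hand side is $\sim\delta^3$. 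Since $\|1\|_{\mathrm{H}^{\frac{1}{2}}(\partial\Omega_i)}=|\partial\Omega_i|^{\frac{1}{2}}\sim\delta$, the duality definition \ref{normd2} yields $\|\partial_\mathrm{t}^\mathrm{n}\partial_\nu u^{(i)}\|_{\mathrm{H}^{-\frac{1}{2}}(\partial\Omega_i)}\gtrsim\delta^3/\delta=\delta^2$.

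The main technical obstacle I anticipate is the self-consistent closure of the upper-bound step: the quantity being estimated appears implicitly on both sides of the coupled system through $\bm{\mathcal{R}}_i$, and the inversion requires the contraction constant in \ref{inversion-cond} to remain strictly below one uniformly in $\mathrm{M}$ and $\delta$. A secondary delicacy is the retarded, rather than purely spatial, structure of $\bm{\mathcal{K}}^{(ij)}$ and $\partial_\nu\bm{\mathrm{V}}^{(ij)}$; this is reduced to static Laplace-type principal parts plus remainders via the Taylor expansions already carried out in \ref{firstestimate}--\ref{secondestimate}, and the remainders are absorbed by Proposition \ref{p1} together with the time regularity from \cite{AM2}.
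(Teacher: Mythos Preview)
Your route is genuinely different from the paper's, and it has a gap at the substitution step.

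\textbf{What the paper does.} The paper does not invert the boundary integral equation \eqref{ls1-multi} at all to obtain the $\mathrm{H}^{-\frac12}$ bound. Instead it takes a volumetric detour: Lemma~\ref{l1} first proves the improved gradient estimate $\|\partial_\mathrm{t}^\mathrm{n}\nabla u^{(i)}\|_{\mathrm{L}^2(\Omega_i)}\lesssim\delta^{5/2}$ by projecting the Lippmann--Schwinger system onto the decomposition $\mathbb{L}^2=\mathbb{H}_0(\mathrm{div}\,0)\oplus\mathbb{H}_0(\mathrm{curl}\,0)\oplus\nabla\mathbb{H}_{\mathrm{arm}}$ and using the spectrum $(\lambda_\mathrm{n}^{(3)},e_\mathrm{n}^{(3)})$ of the Magnetization operator $\bm{\mathrm{M}}^{(0)}_{\mathrm{B}_i}$; the condition \eqref{inversion-cond} is exactly what makes the resulting diagonal system contractive. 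Proposition~\ref{p3} is then a two-line consequence of the normal trace $\mathbb{H}(\mathrm{div},\mathrm{B}_i)\to\mathrm{H}^{-\frac12}(\partial\mathrm{B}_i)$ together with the interior equation $\Delta u^{(i)}=\tfrac{\rho_\mathrm{c}}{\mathrm{k}_\mathrm{c}}u^{(i)}_{\mathrm{t}\mathrm{t}}$ and Proposition~\ref{p1}. No Neumann--Poincar\'e spectral analysis on $\partial\mathrm{B}_i$ is invoked.

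\textbf{The gap in your argument.} Your operator $\bm{\mathcal{A}}_i$ is supposed to act on $\partial_\nu u^{(i)}$ alone, but equation \eqref{ls1-multi} is \emph{not} closed in $\partial_\nu u^{(i)}$: the term $\gamma_i\,\partial_\nu\bm{\mathrm{V}}^{(ii)}[u_{\mathrm{t}\mathrm{t}}^{(i)}]$ depends on the full volumetric field $u^{(i)}$ in $\Omega_i$, and your device $\mathcal{T}_i$ (``substitution of $u_{\mathrm{t}\mathrm{t}}^{(i)}$ via its interior Green representation'') does not produce a bounded map $\mathrm{H}^{-\frac12}(\partial\Omega_i)\to\mathrm{H}^{-\frac12}(\partial\Omega_i)$ in any obvious way. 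The paper only ever exploits the relation between $\bm{\mathrm{V}}^{(ii)}[u_{\mathrm{t}\mathrm{t}}^{(i)}]$ and $\partial_\nu u^{(i)}$ \emph{after integrating over $\partial\Omega_i$} (see \eqref{apr3}), i.e.\ on the one-dimensional subspace of constants, which is not enough to define $\mathcal{T}_i$ as an operator. If instead you move $\gamma_i\,\partial_\nu\bm{\mathrm{V}}^{(ii)}[u_{\mathrm{t}\mathrm{t}}^{(i)}]$ to the right-hand side and bound it by Proposition~\ref{p1}, the resulting source is only $\mathcal{O}(\delta)$ in $\mathrm{H}^{-\frac12}(\partial\Omega_i)$ after scaling, and inverting $(1+\tfrac{\alpha_i\rho_\mathrm{m}}{2})I-\alpha_i\rho_\mathrm{m}\mathcal{K}^\ast_{\mathrm{lap}}$ on the mean-zero part with norm $\sim\delta^2$ then gives only $\delta^3$, not $\delta^2$, on that component; on constants your operator is the identity, so the constant part of the source (which is $\mathcal{O}(\delta)$) would survive at order $\delta$. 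Either way the bookkeeping does not close at $\delta^2$ without the refined volumetric input that the paper supplies via Lemma~\ref{l1}.

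\textbf{One point in your favor.} Your lower-bound argument via duality against $1$ and the interior identity $\int_{\partial\Omega_i}\partial_\nu u^{(i)}=\tfrac{\rho_\mathrm{c}}{\mathrm{k}_\mathrm{c}}\int_{\Omega_i}u^{(i)}_{\mathrm{t}\mathrm{t}}$ is clean and is actually more explicit than what the paper writes; the paper's proof establishes only the upper bound rigorously and records the $\sim$ somewhat informally.
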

\begin{proof}
See Section \ref{apri} for the proof.
\end{proof}
\noindent
By utilizing the equations (\ref{apr1multi}), (\ref{apr3}), and (\ref{result}) in (\ref{aprmulti}) and employing similar mathematical calculations as presented in \cite{AM2}, we arrive at the following algebraic system:
\begin{align}\label{algebricsystem}
&\nonumber\Big(1 - \gamma_i\frac{\mathrm{k}_\mathrm{c}\rho_\mathrm{m}}{\rho_\mathrm{c}}\Big)\int_{\partial\Omega_i} \partial_\nu u^{(i)} + \frac{\alpha_i\rho_\mathrm{m}}{2} \mathrm{A}_{\partial\Omega_i} \mathrm{c}_0^{-2}\int_{\partial\Omega_i}\partial_t^2\partial_\nu u^{(i)} \\ \nonumber&= \int_{\partial\Omega_i} \partial_\nu u^\textbf{in} \textcolor{black}{-} \frac{1}{2} \frac{\rho_\mathrm{m}}{\mathrm{c}^{2}_0}\sum\limits_{\substack{j=1 \\ j\neq i}}^M\alpha_j\int_{\partial\Omega_i}\int_{\partial\Omega_j}\ \frac{(\mathrm{x}-\mathrm{y})\cdot\nu_\mathrm{x}}{|\mathrm{x}-\mathrm{y}|}\partial_\mathrm{t}^2\partial_\nu u^{(j)}(\mathrm{y}, \mathrm{t}- \mathrm{c}_0^{-1}|\mathrm{z}_i-\mathrm{z}_j|) d\sigma_\mathrm{y}d\sigma_\mathrm{x} \\ &- \underbrace{\frac{1}{3} \frac{\rho_\mathrm{m}}{\mathrm{c}^{3}_0}\sum\limits_{\substack{j=1 \\ j\neq i}}^M\alpha_j\int_{\partial\Omega_i}\int_{\partial\Omega_j} (\mathrm{x}-\mathrm{y})\cdot\nu_\mathrm{x}\partial_\mathrm{t}^3\partial_\nu u^{(j)}(\mathrm{y}, \eta)d\sigma_\mathrm{y}d\sigma_\mathrm{x}}_{:=\; \mathrm{S}_5} +  \underbrace{\sum\limits_{\substack{j=1 \\ j\neq i}}^M\gamma_j \int_{\partial\Omega_i}\partial_\nu\bm{\mathrm{V}}^{(ij)}\Big[u^{(j)}_{\mathrm{t}\mathrm{t}}\Big]}_{:=\; \mathrm{S}_6} + \mathcal{O}(\delta^4).
\end{align}
Next, we turn our attention to the second term on the right-hand side of the equation. Hence we obtain with an exchange of  the order of integration :
\begin{align}\label{coupledde}
 \nonumber& \frac{1}{2} \frac{\rho_\mathrm{m}}{\mathrm{c}^{2}_0}\sum\limits_{\substack{j=1 \\ j\neq i}}^M\alpha_j\int_{\partial\Omega_i}\int_{\partial\Omega_j}\ \frac{(\mathrm{x}-\mathrm{y})\cdot\nu_\mathrm{x}}{|\mathrm{x}-\mathrm{y}|}\partial_\mathrm{t}^2\partial_\nu u^{(j)}(\mathrm{y}, \mathrm{t}- \mathrm{c}_0^{-1}|\mathrm{z}_i-\mathrm{z}_j|) d\sigma_\mathrm{y}d\sigma_\mathrm{x} \\ \nonumber &=   \frac{1}{2} \frac{\rho_\mathrm{m}}{\mathrm{c}^{2}_0}\sum\limits_{\substack{j=1 \\ j\neq i}}^M\alpha_j \int_{\partial\Omega_j}\partial_\mathrm{t}^2\partial_\nu u^{(j)}(\mathrm{y}, \mathrm{t}- \mathrm{c}_0^{-1}|\mathrm{z}_i-\mathrm{z}_j|)\int_{\partial\Omega_i}\ \frac{(\mathrm{x}-\mathrm{y})\cdot\nu_\mathrm{x}}{|\mathrm{x}-\mathrm{y}|} d\sigma_\mathrm{x} d\sigma_\mathrm{y}
 \\ \nonumber&= \frac{1}{2} \frac{\rho_\mathrm{m}}{\mathrm{c}^{2}_0}\sum\limits_{\substack{j=1 \\ j\neq i}}^M\alpha_j \int_{\partial\Omega_j}\partial_\mathrm{t}^2\partial_\nu u^{(j)}(\mathrm{y}, \mathrm{t}- \mathrm{c}_0^{-1}|\mathrm{z}_i-\mathrm{z}_j|)\int_{\Omega_i}\ \nabla\cdot\frac{(\mathrm{x}-\mathrm{y})}{|\mathrm{x}-\mathrm{y}|} d\mathrm{x} d\sigma_\mathrm{y}
 \\ \nonumber&=\frac{\rho_\mathrm{m}}{\mathrm{c}^{2}_0}\sum\limits_{\substack{j=1 \\ j\neq i}}^M\alpha_j \int_{\partial\Omega_j}\partial_\mathrm{t}^2\partial_\nu u^{(j)}(\mathrm{y}, \mathrm{t}- \mathrm{c}_0^{-1}|\mathrm{z}_i-\mathrm{z}_j|)\int_{\Omega_i}\frac{1}{|\mathrm{x}-\mathrm{y}|} d\mathrm{x} d\sigma_\mathrm{y}
 \\ \nonumber &=  \frac{\rho_\mathrm{m}}{\mathrm{c}^{2}_0}\sum\limits_{\substack{j=1 \\ j\neq i}}^M\alpha_j\Bigg[\delta^3\frac{1}{|\mathrm{z}_i-\mathrm{z}_j|} \int_{\partial\Omega_j}\partial_\mathrm{t}^2\partial_\nu u^{(j)}(\mathrm{y},\mathrm{t}- \mathrm{c}_0^{-1}|\mathrm{z}_i-\mathrm{z}_j|)d\sigma_\mathrm{y} \\ \nonumber& + \int_{\partial\Omega_j}\partial_\mathrm{t}^2\partial_\nu u^{(j)}(\mathrm{y}, \mathrm{t}- \mathrm{c}_0^{-1}|\mathrm{z}_i-\mathrm{z}_j|)\int_{\Omega_i}\int_0^1\mathop{\nabla}\limits_{\mathrm{x}}g(\mathrm{z}_i+\vartheta_1(x-\mathrm{z}_i);\mathrm{z}_\mathrm{j})\cdot(x-\mathrm{z}_i)\vartheta d\mathrm{x} d\sigma_\mathrm{y}
 \\ &+ \int_{\partial\Omega_j}\partial_\mathrm{t}^2\partial_\nu u^{(j)}(\mathrm{y}, \mathrm{t}- \mathrm{c}_0^{-1}|\mathrm{z}_i-\mathrm{z}_j|)\int_{\Omega_i}\int_0^1\mathop{\nabla}\limits_{\mathrm{y}}g(\mathrm{z}_\mathrm{i};\mathrm{z}_j+\vartheta_1(y-\mathrm{z}_j))\cdot(y-\mathrm{z}_j)d\vartheta d\mathrm{x} d\sigma_\mathrm{y}\Bigg].
\end{align}\vspace{-5pt}
Next, we proceed to estimate the second term of the above expression. Let us define $g(x,y):= \frac{1}{|x-y|}$ and denote the estimation of $S_3$ as
\begin{align}
    \nonumber&\textbf{Estimation of $S_3$}\\ :=&\nonumber\Bigg|\int_{\partial\Omega_j}\partial_\mathrm{t}^2\partial_\nu u^{(j)}(\mathrm{y}, \mathrm{t}- \mathrm{c}_0^{-1}|\mathrm{z}_i-\mathrm{z}_j|)\int_{\Omega_i}\int_0^1\mathop{\nabla}\limits_{\mathrm{x}}g(\mathrm{z}_i+\vartheta_1(x-\mathrm{z}_i);\mathrm{z}_\mathrm{j})\cdot(x-\mathrm{z}_i)\vartheta d\mathrm{x} d\sigma_\mathrm{y}\Bigg|
    \\ \nonumber &\lesssim \Vert\partial_\mathrm{t}^2\partial_\nu u^{(j)}(\cdot, \mathrm{t}- \mathrm{c}_0^{-1}|\mathrm{z}_i-\mathrm{z}_j|)\Vert_{\mathrm{H}^{-\frac{1}{2}}(\partial\Omega_i)}\Big\Vert\int_{\Omega_i}\int_0^1\mathop{\nabla}\limits_{\mathrm{x}}g(\mathrm{z}_i+\vartheta_1(x-\mathrm{z}_i);\mathrm{z}_\mathrm{j})\cdot(x-\mathrm{z}_i)d\vartheta d\mathrm{x}\Big\Vert_{\mathrm{H}^{\frac{1}{2}}(\partial\Omega_i)} 
     \\ \nonumber &\lesssim \Vert\partial_\mathrm{t}^2\partial_\nu u^{(j)}(\cdot, \mathrm{t}- \mathrm{c}_0^{-1}|\mathrm{z}_i-\mathrm{z}_j|)\Vert_{\mathrm{H}^{-\frac{1}{2}}(\partial\Omega_i)}\Big\Vert\int_{\Omega_i}\int_0^1\mathop{\nabla}\limits_{\mathrm{x}}g(\mathrm{z}_i+\vartheta_1(x-\mathrm{z}_i);\mathrm{z}_\mathrm{j})\cdot(x-\mathrm{z}_i)d\vartheta d\mathrm{x}\Big\Vert_{\mathrm{L}^2(\partial\Omega_i)} 
     \\  &\lesssim \Vert\partial_\mathrm{t}^2\partial_\nu u^{(j)}(\cdot, \mathrm{t}- \mathrm{c}_0^{-1}|\mathrm{z}_i-\mathrm{z}_j|)\Vert_{\mathrm{H}^{-\frac{1}{2}}(\partial\Omega_i)} \delta^5 \mathrm{d}_{ij}^{-2} \sim \delta^7\mathrm{d}_{ij}^{-2}.
\end{align}
Likewise, we define the estimation of $S_4$ as
\begin{align}
    \nonumber\textbf{Estimation of $S_4$}:=& \Bigg| \int_{\partial\Omega_j}\partial_\mathrm{t}^2\partial_\nu u^{(j)}(\mathrm{y}, \mathrm{t}- \mathrm{c}_0^{-1}|\mathrm{z}_i-\mathrm{z}_j|) \int_{\Omega_i}\int_0^1\mathop{\nabla}\limits_{\mathrm{y}}g(\mathrm{z}_\mathrm{i};\mathrm{z}_j+\vartheta_1(y-\mathrm{z}_j))\cdot(y-\mathrm{z}_j)d\vartheta d\mathrm{x}d\sigma_\mathrm{y} \Bigg| \\ &\lesssim \delta^7\mathrm{d}_{ij}^{-2}. 
\end{align}
Hence, by substituting the two previously estimated quantities into equation (\ref{coupledde}), we obtain the following equation:
\begin{align}\label{emul2.28}
    \nonumber &\frac{1}{2} \frac{\rho_\mathrm{m}}{\mathrm{c}^{2}_0}\sum\limits_{\substack{j=1 \\ j\neq i}}^M\alpha_j\int_{\partial\Omega_i}\int_{\partial\Omega_j}\ \frac{(\mathrm{x}-\mathrm{y})\cdot\nu_\mathrm{x}}{|\mathrm{x}-\mathrm{y}|}\partial_\mathrm{t}^2\partial_\nu u^{(j)}(\mathrm{y}, \mathrm{t}- \mathrm{c}_0^{-1}|\mathrm{z}_i-\mathrm{z}_j|) d\sigma_\mathrm{y}d\sigma_\mathrm{x} \\ &=  \textcolor{black}{\rho_m} \mathrm{c}^{-2}_0\sum\limits_{\substack{j=1 \\ j\neq i}}^M\alpha_j\delta^3\frac{1}{|\mathrm{z}_i-\mathrm{z}_j|} \int_{\partial\Omega_j}\partial_\mathrm{t}^2\partial_\nu u^{(j)}(\mathrm{y}, \mathrm{t}- \mathrm{c}_0^{-1}|\mathrm{z}_i-\mathrm{z}_j|)d\sigma_\mathrm{y} + \mathcal{O}\Big(\delta^5 \sum\limits_{\substack{j=1 \\ j\neq i}}^M \mathrm{d}_{ij}^{-2}\Big).
\end{align}

We proceed by estimating $S_5$ as follows:
\begin{align}\nonumber
    \textbf{Estimation of $S_5$}:=&\Big|\int_{\partial\Omega_i}\int_{\partial\Omega_j} (\mathrm{x}-\mathrm{y})\cdot\nu_\mathrm{x}\partial_\mathrm{t}^3\partial_\nu u^{(j)}(\mathrm{y}, \eta)d\sigma_\mathrm{y}d\sigma_\mathrm{x}\Big|
    \\ \nonumber&= \Big|\int_{\partial\Omega_j}\partial_\mathrm{t}^3\partial_\nu u^{(j)}(\mathrm{y}, \eta) \int_{\partial\Omega_i}(\mathrm{x}-\mathrm{y})\cdot\nu_\mathrm{x}d\sigma_\mathrm{x} d\sigma_\mathrm{y}\Big|
    \\ \nonumber&= \Big|\int_{\partial\Omega_j}\partial_\mathrm{t}^3\partial_\nu u^{(j)}(\mathrm{y}, \eta) \underbrace{\int_{\Omega_i}\nabla\cdot(\mathrm{x}-\mathrm{y})d\mathrm{x}}_{=\ 3|\Omega_i|} d\sigma_\mathrm{y}\Big|
    \\ &\lesssim \delta^3\Big\langle 1;\partial_\mathrm{t}^3\partial_\nu u^{(j)}(\cdot, \eta)\Big\rangle_{\partial\Omega_j} \sim \delta^6.
\end{align}
Thus, we obtain the following expression for the final term:
\begin{align}
 \textbf{Estimation of $S_5$} = \frac{1}{3} \frac{\rho_\mathrm{m}}{\mathrm{c}^{3}_0}\sum\limits_{\substack{j=1 \\ j\neq i}}^M\alpha_j\int_{\partial\Omega_i}\int_{\partial\Omega_j} (\mathrm{x}-\mathrm{y})\cdot\nu_\mathrm{x}\partial_\mathrm{t}^3\partial_\nu u^{(j)}(\mathrm{y}, \eta)d\sigma_\mathrm{y}d\sigma_\mathrm{x} \sim \delta^4.   
\end{align}
\begin{align}\label{s_4}
&\nonumber\textbf{Estimation of $S_6$}
\\ \nonumber&:=
 \sum\limits_{\substack{j=1 \\ j\neq i}}^M\gamma_j\int_{\partial\Omega_i}\partial_\nu\bm{\mathrm{V}}^{(ij)}\big[\partial^2_\mathrm{t}u^{(j)}\big](\mathrm{x},\mathrm{t})d\sigma_\mathrm{x} 
 \\ \nonumber&= \sum\limits_{\substack{j=1 \\ j\neq i}}^M\gamma_j\int_{\partial\Omega_i}\partial_{\nu_\mathrm{x}} \int_{\Omega_j}\frac{\rho_\mathrm{m} }{4\pi|\mathrm{x}-\mathrm{y}|}\partial_\mathrm{t}^{2}u^{(j)}(\mathrm{y},\mathrm{t}-\mathrm{c}_0^{-1}|\mathrm{x}-\mathrm{y}|)d\mathrm{y}d\sigma_\mathrm{x}
 \\ \nonumber&= \sum\limits_{\substack{j=1 \\ j\neq i}}^M\gamma_j\int_{\Omega_i}\nabla_\mathrm{x}\cdot \int_{\Omega_j}\frac{\rho_\mathrm{m} }{4\pi|\mathrm{x}-\mathrm{y}|}\partial_\mathrm{t}^{2}u^{(j)}(\mathrm{y},\mathrm{t}-\mathrm{c}_0^{-1}|\mathrm{x}-\mathrm{y}|)d\mathrm{y}d\mathrm{x}
   \\ \nonumber&=\Bigg|\frac{\rho_\mathrm{m}}{4\pi}\sum\limits_{\substack{j=1 \\ j\neq i}}^M\gamma_j\int_{\Omega_i}\nabla_\mathrm{x}\cdot\int_{\Omega_j}\frac{1}{|\mathrm{x}-\mathrm{y}|}\partial_\mathrm{t}^{2}u^{(j)}(\mathrm{y},\mathrm{t}-\mathrm{c}_0^{-1}|\mathrm{z}_i-\mathrm{z}_j|)d\mathrm{y}d\mathrm{s}\Bigg| 
   \\ &+ \Bigg|\frac{\rho_\mathrm{m}}{4\pi}\sum\limits_{\substack{j=1 \\ j\neq i}}^M\gamma_j\int_{\Omega_i}\nabla_\mathrm{x}\cdot\int_{\Omega_j}\frac{1}{|\mathrm{x}-\mathrm{y}|}\Big(\partial_\mathrm{t}^{2}u^{(j)}(\mathrm{y},\mathrm{t}-\mathrm{c}_0^{-1}|\mathrm{z}_i-\mathrm{z}_j|)d\mathrm{y}-\partial_\mathrm{t}^{2}u^{(j)}(\mathrm{y},\mathrm{t}-\mathrm{c}_0^{-1}|\mathrm{x}-\mathrm{y}|)\Big)d\mathrm{y}d\mathrm{x}.
\end{align}
Let us now, give a close look on the second term of the above expression to deduce
\begin{align}
  &\nonumber\Bigg|\frac{\rho_\mathrm{m}}{4\pi}\sum\limits_{\substack{j=1 \\ j\neq i}}^M\gamma_j\int_{\Omega_i}\nabla_\mathrm{x}\cdot\int_{\Omega_j}\frac{1}{|\mathrm{x}-\mathrm{y}|}\Big(\partial_\mathrm{t}^{2}u^{(j)}(\mathrm{y},\mathrm{t}-\mathrm{c}_0^{-1}|\mathrm{z}_i-\mathrm{z}_j|)d\mathrm{y}-\partial_\mathrm{t}^{2}u^{(j)}(\mathrm{y},\mathrm{t}-\mathrm{c}_0^{-1}|\mathrm{x}-\mathrm{y}|)\Big)d\mathrm{y}d\mathrm{x} \Bigg|
  \\ \nonumber&= \Bigg|\frac{\rho_\mathrm{m}}{4\pi}\sum\limits_{\substack{j=1 \\ j\neq i}}^M\gamma_j\int_{\Omega_i}\nabla_\mathrm{x}\cdot\int_{\Omega_j}\frac{1}{|\mathrm{x}-\mathrm{y}|}\Big(\partial_\mathrm{t}^{2}u^{(j)}(\mathrm{y},\mathrm{t}-\mathrm{c}_0^{-1}|\mathrm{z}_i-\mathrm{z}_j|)d\mathrm{y}-\partial_\mathrm{t}^{2}u^{(j)}(\mathrm{y},\mathrm{t}-\mathrm{c}_0^{-1}|\mathrm{x}-\mathrm{z}_j|)\Big)d\mathrm{y}d\mathrm{x}\Bigg|
  \\ \nonumber&+ \Bigg|\frac{\rho_\mathrm{m}}{4\pi}\sum\limits_{\substack{j=1 \\ j\neq i}}^M\gamma_j\int_{\Omega_i}\nabla_\mathrm{x}\cdot\int_{\Omega_j}\frac{1}{|\mathrm{x}-\mathrm{y}|}\Big(\partial_\mathrm{t}^{2}u^{(j)}(\mathrm{y},\mathrm{t}-\mathrm{c}_0^{-1}|\mathrm{x}-\mathrm{z}_j|)d\mathrm{y}-\partial_\mathrm{t}^{2}u^{(j)}(\mathrm{y},\mathrm{t}-\mathrm{c}_0^{-1}|\mathrm{x}-\mathrm{y}|)\Big)d\mathrm{y}d\mathrm{x}\Bigg|
  \\ &\lesssim \delta^7 \sum\limits_{\substack{j=1 \\ j\neq i}}^M \mathrm{d}_{ij}^{-2}.
\end{align}
Therefore, based on the above estimate and (\ref{s_4}) we obtain
\begin{align}
    \textbf{Estimation of $S_6$} = \sum\limits_{\substack{j=1 \\ j\neq i}}^M\gamma_j\int_{\partial\Omega_i}\partial_\nu\bm{\mathrm{V}}^{(ij)}\big[\partial^2_\mathrm{t}u^{(j)}\big](\mathrm{x},\mathrm{t})d\sigma_\mathrm{x} \lesssim \delta^7 \sum\limits_{\substack{j=1 \\ j\neq i}}^M \mathrm{d}_{ij}^{-2}.
\end{align}

By substituting the estimated values of (\ref{emul2.28}), $S_5$, and $S_6$ into (\ref{algebricsystem}), we derive the following equation:
\begin{align}
\nonumber&\Big(1 - \gamma_i\frac{\mathrm{k}_\mathrm{c}\rho_\mathrm{m}}{\rho_\mathrm{c}}\Big)\int_{\partial\Omega_i} \partial_\nu u^{(i)} + \frac{\alpha_i\rho_\mathrm{m}}{2} \mathrm{A}_{\partial\Omega_i} \mathrm{c}_0^{-2}\int_{\partial\Omega_i}\partial_t^2\partial_\nu u^{(i)} \\ & \textcolor{black}{+} \frac{1}{2} \frac{\rho_\mathrm{m}}{\mathrm{c}^{2}_0}\sum\limits_{\substack{j=1 \\ j\neq i}}^M\alpha_j\delta^3\frac{1}{|\mathrm{z}_i-\mathrm{z}_j|} \int_{\partial\Omega_j}\partial_\mathrm{t}^2\partial_\nu u^{(j)}(\mathrm{y}, \mathrm{t}-\mathrm{c}_0^{-1}|\mathrm{x}-\mathrm{z}_j|)d\sigma_\mathrm{y} = \int_{\partial\Omega_i} \partial_\nu u^\textbf{in} + \delta^5 \sum\limits_{\substack{j=1 \\ j\neq i}}^M \mathrm{d}_{ij}^{-2} + \delta^7 \sum\limits_{\substack{j=1 \\ j\neq i}}^M \mathrm{d}_{ij}^{-2}+ \mathcal{O}(\delta^4).  
\end{align}
We now recall that the coefficients $\rho_i$ and $\mathrm{k}_i$ are piece-wise constants, with one constant being outside $\Omega_i$. That is, $\rho(\mathrm{x})\equiv \rho_\mathrm{m}$ and $\mathrm{k}(\mathrm{x}) \equiv \mathrm{k}_\mathrm{m}$, and other constants $\rho_\mathrm{c}$ and $\mathrm{k}_\mathrm{c}$ inside $\Omega_i$, satisfying the following scaling properties:
\begin{align}\label{scalingmulti}
    \rho_\mathrm{c} = \overline{\rho}_\mathrm{c}\delta^2, \quad \mathrm{k}_\mathrm{c} = \overline{\mathrm{k}}_\mathrm{c}\delta^2 \quad \text{and} \quad \frac{\rho_\mathrm{c}}{\mathrm{k}_\mathrm{c}} \sim 1 \ \text{as}\ \delta\ll1.
\end{align}
Therefor, after a short calculation, we obtain $1 - \gamma_i\frac{\mathrm{k}_\mathrm{c}\rho_\mathrm{m}}{\rho_\mathrm{c}} = \frac{\mathrm{k}_\mathrm{c}}{\rho_\mathrm{c}}\frac{1}{\mathrm{c}_0^2}$, where $\gamma_i:=\beta_i-\alpha_i \frac{\rho_\mathrm{c}}{\mathrm{k}_\mathrm{c}}$ with $\alpha_i:=\frac{1}{\rho_\mathrm{c}}-\frac{1}{\rho_\mathrm{m}}$ and $\beta_i:= \frac{1}{\mathrm{k}_\mathrm{c}}-\frac{1}{\mathrm{k}_\mathrm{m}}.$
\newline

Next, the vector $\displaystyle\Big(\int_{\partial\Omega_i} \partial_\nu u^{(i)}\Big)_{i=1}^\mathrm{M}$ satisfies the following differential equation:
\begin{align}
    \frac{\rho_\mathrm{m}}{2}\alpha_i\frac{\rho_\mathrm{c}}{\mathrm{k}_\mathrm{c}}\mathrm{A}_{\partial\Omega_i}\int_{\partial\Omega_i}\partial_t^2\partial_\nu u^{(i)} \nonumber&\textcolor{black}{+}\textcolor{black}{\rho_\mathrm{m}}\alpha_j\delta^3\frac{\rho_\mathrm{c}}{\mathrm{k}_\mathrm{c}}\sum\limits_{\substack{j=1 \\ j\neq i}}^M\frac{1}{|\mathrm{z}_i-\mathrm{z}_j|} \int_{\partial\Omega_j}\partial_\mathrm{t}^2\partial_\nu u^{(j)}(\mathrm{y},\mathrm{t}-\mathrm{c}_0^{-1}|\mathrm{z}_i-\mathrm{z}_j|) + \int_{\partial\Omega_i} \partial_\nu u^{(i)} \\ &= \frac{\rho_\mathrm{c}}{\mathrm{k}_\mathrm{c}}\mathrm{c}_0^2\int_{\partial\Omega_i} \partial_\nu u^\textbf{in} + \textbf{Error},
\end{align}
where 
\begin{align}
    \textbf{Error}:= \delta^5 \sum\limits_{\substack{j=1 \\ j\neq i}}^M \mathrm{d}_{ij}^{-2} + \delta^7 \sum\limits_{\substack{j=1 \\ j\neq i}}^M \mathrm{d}_{ij}^{-2} + \mathcal{O}(\delta^4).
\end{align}
In order to proceed, it is imperative to establish the existence of a solution to the aforementioned system of differential equations.

\subsection{Solvability of the System of Differential Equations}  % Sub-Section

Let us first recall that $\bm{\mathrm{Y}}:=\big(\mathrm{Y}_i\big)_{i=1}^\mathrm{M}$ is the vector solution to the following non-homogeneous second-order matrix differential equation with initial conditions:
\begin{align}\label{matrixmulti1}
\begin{cases}\displaystyle
    \mathrm{d}_i\frac{\mathrm{d}^2}{\mathrm{d}\mathrm{t}^2}\mathrm{Y}_i(\mathrm{t}) + \mathrm{Y}_i(\mathrm{t}) \textcolor{black}{+} \sum\limits_{\substack{j=1 \\ j\neq i}}^M\mathrm{q}_{ij} \frac{\mathrm{d}^2}{\mathrm{d}\mathrm{t}^2}\mathrm{Y}_j(\mathrm{t}-\mathrm{c}_0^{-1}|\mathrm{z}_i-\mathrm{z}_j|) = \frac{\rho_\mathrm{c}}{\mathrm{k}_\mathrm{c}}\mathrm{c}_0^2\int_{\partial\Omega_i} \partial_\nu u^\textbf{in} \mbox{ in } (0, \mathrm{T}),
     \\ \mathrm{Y}_i(\mathrm{0}) = \frac{\mathrm{d}}{\mathrm{d}\mathrm{t}}\mathrm{Y}_i(\mathrm{0}) = 0,
\end{cases}
\end{align}
where $\mathrm{d}_i$ is defined as $\mathrm{d}_i:= \frac{\rho_\mathrm{m}}{2}\alpha_i\frac{\rho_\mathrm{c}}{\mathrm{k}_\mathrm{c}} \mathrm{A}_{\partial\Omega_i}$. Furthermore, $\bm{\mathrm{Q}}=\big(\mathrm{q}_{ij}\big)_{i,j=1}^\mathrm{M}$ is given by
\begin{align}\label{interaction-matrix1}
\mathrm{q}_{ij}=\begin{cases}
0 & \text{for}\ i=j,\\
\frac{\mathrm{b}_j}{|\mathrm{z}_i-\mathrm{z}_j|} & \text{for}\ i\ne j,
\end{cases}
\end{align}
where $\mathrm{b}_j$ is defined as $\mathrm{b}_j:= \frac{\rho_\mathrm{m}}{2}\alpha_j\delta^3\frac{\rho_\mathrm{c}}{\mathrm{k}_\mathrm{c}}$. We also define $\bm{\mathrm{B}}=\big(\mathrm{B}_i\big)_{i=1}^\mathrm{M}:= \displaystyle\Big(\int_{\partial\Omega_i} \partial_\nu u^\textbf{in}\Big)_{i=1}^\mathrm{M}$, and \\ $\displaystyle\mathrm{A}_{\partial\Omega_i} = \frac{1}{|\partial \Omega_i|}\int_{\partial\Omega_i}\int_{\partial\Omega_i}\frac{(\mathrm{x}-\mathrm{y})\cdot\nu_\mathrm{x}}{|\mathrm{x}-\mathrm{y}|}d\sigma_\mathrm{x}d\sigma_\mathrm{y}.$
\newline
In order to discuss the existence and uniqueness of the solution of the equation (\ref{matrixmulti1}), we rewrite the system above as follows:
\begin{align} \label{matrix2}
    \begin{cases}
             \mathcal{A}\frac{\mathrm{d}^2}{\mathrm{d}\mathrm{t}^2}\bm{\mathrm{Y}}(\mathrm{t}) + \bm{\mathrm{Y}}(\mathrm{t}) =  \bm{\mathrm{B}}(\mathrm{t}) \mbox{ in } (0, \mathrm{T}),
             \\ \bm{\mathrm{Y}}(\mathrm{0}) = \frac{\mathrm{d}}{\mathrm{d}\mathrm{t}}\bm{\mathrm{Y}}(\mathrm{0}) = 0,   
    \end{cases}
\end{align}
where, we define the operator $\mathcal{A}: (\mathrm{L}_\ell^2)^M\to (\mathrm{L}_\ell^2)^M$ as
\begin{align}
    \mathcal{A} = \mathcal{A}(t):=
     \begin{pmatrix}
        \mathrm{d}_1 & \dots  & \mathrm{q}_{1\mathrm{M}}\mathcal{T}_{-\mathrm{c}_0^{-1}|\mathrm{z}_i-\mathrm{z}_j|}\\
       \vdots & \ddots & \vdots\\
       \mathrm{q}_{\mathrm{M}1}\mathcal{T}_{-\mathrm{c}_0^{-1}|\mathrm{z}_i-\mathrm{z}_j|} & \dots  & \mathrm{d}_\mathrm{M} 
    \end{pmatrix}, 
\end{align}
with $\mathrm{L}_\ell^2:= \{ f \in \mathrm{L}^2(-\ell,\mathrm{T}): f=0\; \text{in}\ (-\ell,0)\}$, the translation operators $\mathcal{T}_{-\mathrm{c}_0^{-1}|\mathrm{z}_i-\mathrm{z}_j|}$, i.e. $\mathcal{T}_{-\mathrm{c}_0^{-1}|\mathrm{z}_i-\mathrm{z}_j|}(f)(t):=f(t-\mathrm{c}_0^{-1}|\mathrm{z}_i-\mathrm{z}_j|)$, and $\ell:=\max_{i\neq j}{\mathrm{c}^{-1}_0\vert z_i- z_j\vert}$.
\newline
Using similar techniques as discussed in \cite[Lemma 3.3]{sini-haibing-yao}, it can be shown that $\mathcal{A}$ is an invertible operator from $\mathrm{L}_\ell^2\to \mathrm{L}_\ell^2$, under the condition \textcolor{black}{$\max\limits_{1\le i\le M}\sum\limits_{\substack{j=1 \\ j\neq i}}^M\mathrm{q}_{ij}<d_i$ for $i=1,2,...,M$}. As a result, the system of differential equations reduces to the following system:
\begin{align} \label{matrix3}
    \begin{cases}
             \frac{\mathrm{d}^2}{\mathrm{d}\mathrm{t}^2}\bm{\mathrm{Y}}(\mathrm{t}) + \mathcal{A}^{-1}\bm{\mathrm{Y}}(\mathrm{t}) =  \mathcal{A}^{-1}\bm{\mathrm{B}}(\mathrm{t}) \mbox{ in } (0, \mathrm{T}),
             \\ \bm{\mathrm{Y}}(\mathrm{0}) = \frac{\mathrm{d}}{\mathrm{d}\mathrm{t}}\bm{\mathrm{Y}}(\mathrm{0}) = 0.  
    \end{cases}
\end{align}
Next, we can reduce the above second-order system of ordinary differential equations to a system of first-order ordinary differential equations by considering the unknown  $\bm{\mathrm{Z}}(\mathrm{t}) = \begin{pmatrix}
    \bm{\mathrm{Y}}(\mathrm{t}) \\
    \frac{d}{d\mathrm{t}}\bm{\mathrm{Y}}(\mathrm{t})
\end{pmatrix}$. Therefore, with this we obtain the following system of first order non-homogeneous ordinary differential equations with the zero initial condition:
\begin{align}\label{reduced-ode}
    \begin{cases}
       \frac{d}{d\mathrm{t}}\bm{\mathrm{Z}}(\mathrm{t}) = \begin{pmatrix}
        0 & \mathrm{I} \\
        - \bm{\mathcal{A}}^{-1} & 0
    \end{pmatrix}\bm{\mathrm{Z}}(\mathrm{t}) + \begin{pmatrix}
        0 \\
        \mathcal{A}^{-1}\bm{\mathrm{B}}(\mathrm{t})
    \end{pmatrix}\\
        \bm{\mathrm{Z}}(0) = \bm{0}.
    \end{cases}
\end{align}
Let us now denote the operator $\bm{\mathrm{R}} := \begin{pmatrix}
        0 & \mathrm{I} \\
        - \bm{\mathcal{A}}^{-1} & 0
    \end{pmatrix}.$  In the next step, we show that $\bm{\mathrm{R}}$ is a bounded operator. To show that the operator $\mathbf{R} = \begin{pmatrix} 0 & \mathbf{I} \\ -\bm{\mathcal{A}}^{-1} & 0 \end{pmatrix}$ is bounded, where $\bm{\mathcal{A}}^{-1}$ is a bounded operator from $L_\ell^2$ to $\mathrm{L}_\ell^2$, we need to demonstrate that there exists a positive constant C such that $\|\mathbf{R} \mathbf{x}\|_{\mathrm{L}_\ell^2} \leq C \|\mathbf{x}\|_{\mathrm{L}_\ell^2}$ for all $\mathbf{x}$ in $\mathrm{L}_\ell^2$.
\newline
Let $\mathbf{x} = \begin{pmatrix} \mathbf{x}_1 \\ \mathbf{x}_2 \end{pmatrix}$ be an arbitrary vector in $\mathrm{L}^2_\ell$, where $\mathbf{x}_1$ and $\mathbf{x}_2$ are elements in the corresponding sub-spaces. Applying the operator $\mathbf{R}$ to $\mathbf{x}$, we have:
\begin{equation*}
\mathbf{R} \mathbf{x} = \begin{pmatrix} 0 & \mathbf{I} \\ -\bm{\mathcal{A}}^{-1} & 0 \end{pmatrix} \begin{pmatrix} \mathbf{x}_1 \\ \mathbf{x}_2 \end{pmatrix} = \begin{pmatrix} \mathbf{x}_2 \\ -\bm{\mathcal{A}}^{-1} \mathbf{x}_1 \end{pmatrix}.    
\end{equation*}
Now, let us calculate the $\mathrm{L}^2_\ell$ norm of $\mathbf{R} \mathbf{x}$:
$$
\|\mathbf{R} \mathbf{x}\|_{\mathrm{L}_\ell^2} = \left( \int^T_0 \|\begin{pmatrix} \mathbf{x}_2 \\ -\bm{\mathcal{A}}^{-1} \mathbf{x}_1 \end{pmatrix}\|^2 \mathrm{d}t \right)^{1/2} = \left( \int^T_0 (\|\mathbf{x}_2\|^2 + \|\bm{\mathcal{A}}^{-1} \mathbf{x}_1\|^2) \mathrm{d}t \right)^{1/2}.
$$
Since $\bm{\mathcal{A}}^{-1}$ is a bounded operator from $\mathrm{L}^2_\ell$ to $\mathrm{L}_\ell^2$, there exists a positive constant $K$ such that $\|\bm{\mathcal{A}}^{-1} \mathbf{x}_1\|_{\mathrm{L}^2_\ell} \leq K \|\mathbf{x}_1\|_{\mathrm{L}^2_\ell}$ for all $\mathbf{x}_1$ in $\mathrm{L}^2_\ell$. Therefore, we can write:
$$
\|\mathbf{R} \mathbf{x}\|_{\mathrm{L}^2_\ell} = \left( \int_0^T (\|\mathbf{x}_2\|^2 + \|\bm{\mathcal{A}}^{-1} \mathbf{x}_1\|_{\mathrm{L}^2_\ell}2) \mathrm{d}t \right)^{1/2} \leq \left( \int_0^T (\|\mathbf{x}_2\|^2 + K^2 \|\mathbf{x}_1\|^2) \mathrm{d}t \right)^{1/2}.
$$
By the triangle inequality,  we have $\|\mathbf{R} \mathbf{x}\|_{\mathrm{L}^2_\ell}^2 \leq (\|\mathbf{x}_2\|_{\mathrm{L}^2_\ell}^2 + K \|\mathbf{x}_1\|_{\mathrm{L}^2_\ell}^2)$. Then, we can write $\|\mathbf{R} \mathbf{x}\|_{\mathrm{L}^2_\ell}^2 \leq C \|\mathbf{x}\|_{\mathrm{L}^2_\ell}^2$ for all $\mathbf{x}$ in $\mathrm{L}_\ell^2$, where $C$ is a positive constant. This establishes that the operator $\mathbf{R}$ is bounded as $\bm{\mathcal{A}}^{-1}$ is a bounded operator from $\mathrm{L}_\ell^2$ to $\mathrm{L}_\ell^2$. In addition, it satisfies the hypothesis in Lemma 3.5.2 in \cite{ode}. Hence it ensures the existence, uniqueness and continuity, with respect to source term, of the solution to (\ref{reduced-ode}) and then of the one of (\ref{matrix2}) (or (\ref{matrixmulti1})). \qed
%------------------------------------------------------------------------------------
%------------------------------------------------------------------------------------

\subsection{End of the Proof of Theorem \ref{mainthmulti}}\label{End-proof}

We start with the following integral representation for $(\mathrm{x},\mathrm{t})\in \mathbb R^3\setminus\overline{\cup_{i=1}^\mathrm{M}\Omega_i}\times(0,\mathrm{T})$
\begin{align}\label{approximation}
    &\nonumber u^\mathrm{s}(\mathrm{x},\mathrm{t})
   \\ &=\nonumber -  \sum_{i=1}^\mathrm{M}\gamma_i\int_{\Omega_i}\frac{\rho_\mathrm{m}}{4\pi|\mathrm{x}-\mathrm{y}|} u^{(i)}_{\mathrm{t}\mathrm{t}}(\mathrm{y},\mathrm{t}-\mathrm{c}_0^{-1}|\mathrm{x}-\mathrm{y}|)d\mathrm{y} - \sum_{i=1}^\mathrm{M}\alpha_i \int_{\partial \Omega_i} \frac{\rho_\mathrm{m}}{4\pi|\mathrm{x}-\mathrm{y}|}\partial_\nu u^{(i)}(\mathrm{y}, \mathrm{t}- \mathrm{c}_0^{-1}|\mathrm{x}-\mathrm{y}|)d\sigma_\mathrm{y}
    \\ \nonumber &= \sum_{i=1}^\mathrm{M}\frac{\rho_\mathrm{m}\gamma_i}{4\pi|\mathrm{x}-\mathrm{z}_i|}\int_{\Omega_i} u^{(i)}_{\mathrm{t}\mathrm{t}}(\mathrm{y},\mathrm{t}-\mathrm{c}_0^{-1}|\mathrm{x}-\mathrm{z}_i|)d\mathrm{y} 
    + \sum_{i=1}^\mathrm{M}\frac{\rho_\mathrm{m}\gamma_i}{4\pi} \int_{\Omega_i} u^{(i)}_{\mathrm{t}\mathrm{t}}(\mathrm{y},\mathrm{t}-\mathrm{c}_0^{-1}|\mathrm{x}-\mathrm{z}_i|) \Big(\frac{1}{|\mathrm{x}-\mathrm{y}|}- \frac{1}{|\mathrm{x}-\mathrm{z}_i|}\Big)d\mathrm{y}
    \\ \nonumber &-\sum_{i=1}^\mathrm{M}\frac{\rho_\mathrm{m}\gamma_i}{4\pi} \int_{\Omega_i}\frac{u^{(i)}_{\mathrm{t}\mathrm{t}}(\mathrm{y},\mathrm{t}-\mathrm{c}_0^{-1}|\mathrm{x}-\mathrm{z}_i|)- u^{(i)}_{\mathrm{t}\mathrm{t}}(\mathrm{y},\mathrm{t}-\mathrm{c}_0^{-1}|\mathrm{x}-\mathrm{y}|)}{|\mathrm{x}-\mathrm{y}|}d\mathrm{y} 
    \\ \nonumber &+ \sum_{i=1}^\mathrm{M}\frac{\rho_\mathrm{m}\alpha_i}{4\pi} \int_{\partial\Omega_i}\frac{1}{|\mathrm{x}-\mathrm{y}|} \partial_\nu u (\mathrm{y},\mathrm{t}-\mathrm{c}_0^{-1}|\mathrm{x}-\mathrm{z}_i|) d\sigma_\mathrm{y}
    \\ \nonumber &-\sum_{i=1}^\mathrm{M}\frac{\rho_\mathrm{m}\alpha_i}{4\pi} \int_{\partial\Omega_i}\frac{\partial_\nu u^{(i)}(\mathrm{y},\mathrm{t}-\mathrm{c}_0^{-1}|\mathrm{x}-\mathrm{z}_i|)-\partial_\nu u^{(i)}(\mathrm{y},\mathrm{t}-\mathrm{c}_0^{-1}|\mathrm{x}-\mathrm{y}|)}{|\mathrm{x}-\mathrm{y}|}d\mathrm{y} 
    \\ \nonumber &= \sum_{i=1}^\mathrm{M}\frac{\rho_\mathrm{m}\gamma_i}{4\pi|\mathrm{x}-\mathrm{z}_i|}\int_{\Omega_i} u^{(i)}_{\mathrm{t}\mathrm{t}}(\mathrm{y},\mathrm{t}-\mathrm{c}_0^{-1}|\mathrm{x}-\mathrm{z}_i|)d\mathrm{y} 
    + \sum_{i=1}^\mathrm{M}\frac{\rho_\mathrm{m}\gamma_i}{4\pi} \int_{\Omega_i} u^{(i)}_{\mathrm{t}\mathrm{t}}(\mathrm{y},\mathrm{t}-\mathrm{c}_0^{-1}|\mathrm{x}-\mathrm{z}_i|) \Big(\frac{1}{|\mathrm{x}-\mathrm{y}|}- \frac{1}{|\mathrm{x}-\mathrm{z}_i|}\Big)d\mathrm{y}
    \\ \nonumber &-\sum_{i=1}^\mathrm{M}\frac{\rho_\mathrm{m}\gamma_i}{4\pi} \int_{\Omega_i}\frac{u^{(i)}_{\mathrm{t}\mathrm{t}}(\mathrm{y},\mathrm{t}-\mathrm{c}_0^{-1}|\mathrm{x}-\mathrm{z}_i|)- u^{(i)}_{\mathrm{t}\mathrm{t}}(\mathrm{y},\mathrm{t}-\mathrm{c}_0^{-1}|\mathrm{x}-\mathrm{y}|)}{|\mathrm{x}-\mathrm{y}|}d\mathrm{y} 
    \\ \nonumber &-\sum_{i=1}^\mathrm{M}\frac{\rho_\mathrm{m}\alpha_i}{4\pi} \int_{\partial\Omega_i}\frac{\partial_\nu u^{(i)}(\mathrm{y},\mathrm{t}-\mathrm{c}_0^{-1}|\mathrm{x}-\mathrm{z}_i|)-\partial_\nu u^{(i)}(\mathrm{y},\mathrm{t}-\mathrm{c}_0^{-1}|\mathrm{x}-\mathrm{y}|)}{|\mathrm{x}-\mathrm{y}|}d\sigma_\mathrm{y}
    \\ \nonumber &+ \sum_{i=1}^\mathrm{M}\frac{\rho_\mathrm{m}\alpha_i}{4\pi} \frac{1}{{|\partial\Omega_i|}}\int_{\partial\Omega_i}\frac{1}{|\mathrm{x}-\mathrm{y}|}\int_{\partial\Omega_i}\partial_\nu u(\mathrm{y},\mathrm{t}-\mathrm{c}_0^{-1}|\mathrm{x}-\mathrm{z}_i|)d\sigma_\mathrm{y} 
    \\ &+ \sum_{i=1}^\mathrm{M}\frac{\rho_\mathrm{m}\alpha_i}{4\pi} \int_{\partial\Omega_i}\Big(\frac{1}{|\mathrm{x}-\mathrm{y}|}-\frac{1}{|\partial\Omega_i|}\int_{\partial\Omega_i}\frac{1}{|\mathrm{x}-\mathrm{y}|}\Big) \partial_\nu u^{(i)} (\mathrm{y},\mathrm{t}-\mathrm{c}_0^{-1}|\mathrm{x}-\mathrm{z}_i|) d\sigma_\mathrm{y}.
\end{align}
Given $\mathrm{z}_i \in \Omega_i$ and $\mathrm{x}\in \mathbb{R}^3\setminus\overline{\bigcup_{i=1}^\mathrm{M}\Omega_i}$, the following estimates are derived using Taylor's series expansion:
\begin{align}\label{t2multi}
 u^{(i)}_{\mathrm{t}\mathrm{t}}(\mathrm{y},\mathrm{t}-\mathrm{c}_0^{-1}|\mathrm{x}-\mathrm{y}|)- u^{(i)}_{\mathrm{t}\mathrm{t}}(\mathrm{y},\mathrm{t}-\mathrm{c}_0^{-1}|\mathrm{x}-\mathrm{z}_i|) =  \mathrm{c}_0^{-1} (\mathrm{y}-\mathrm{z}_i)\nabla|\mathrm{x}-\mathrm{z}^*|\partial_\mathrm{t}^3 u^{(i)}(\mathrm{y},\mathrm{t}_0^*),
\end{align}
\begin{align}\label{ttt2}
 \partial_\nu u^{(i)}(\mathrm{y},\mathrm{t}-\mathrm{c}_0^{-1}|\mathrm{x}-\mathrm{y}|)- \partial_\nu u^{(i)}(\mathrm{y},\mathrm{t}-\mathrm{c}_0^{-1}|\mathrm{x}-\mathrm{z}_i|) =  \mathrm{c}_0^{-1} (\mathrm{y}-\mathrm{z}_i)\nabla|\mathrm{x}-\mathrm{z}^*|\partial_\mathrm{t}\partial_\nu u^{(i)}(\mathrm{y},\mathrm{t}_0^*),
\end{align}
and
\begin{align}\label{tt2multi}
\frac{1}{|\mathrm{x}-\mathrm{y}|}-\frac{1}{|\mathrm{x}-\mathrm{z}_i|} = (\mathrm{y}-\mathrm{z}_i)\nabla\frac{1}{|\mathrm{x}-\mathrm{z}^*|},
\end{align}
where, $ \mathrm{t}_0^*\in (\mathrm{t}-\mathrm{c}_0^{-1}|\mathrm{x}-\mathrm{y}|,\mathrm{t}-\mathrm{c}_0^{-1}|\mathrm{x}-\mathrm{z}_i|)$ and $\mathrm{z}^* \in \Omega_i.$ 
\newline

We proceeded to estimate the following terms, beginning with $\textbf{err}_{(1)}$:
\begin{align}\label{3.12multi}
   \textbf{err}_{(1)}:= \Big| \sum_{i=1}^\mathrm{M}\frac{\rho_\mathrm{m}\gamma_i}{4\pi|\mathrm{x}-\mathrm{z}_i|}\int_{\Omega_i} u^{(i)}_{\mathrm{t}\mathrm{t}}(\mathrm{y},\mathrm{t}-\mathrm{c}_0^{-1}|\mathrm{x}-\mathrm{z}_i|)d\mathrm{y}\Big| &\lesssim \nonumber \mathrm{M}\frac{\gamma_i\rho_\mathrm{m}}{4\pi}| \delta^\frac{3}{2}\mathrm{x}-\mathrm{z}_i|^{-1}\Vert \partial_\mathrm{t}^2u^{(i)}(\cdot,\mathrm{t})\Vert_{\mathrm{L}^2(\Omega_i)}.
\end{align}
We now consider $\bm{\xi}= \max\limits_{1\le i\le \mathrm{M}}\textbf{dist}(\mathrm{x},\mathrm{z}_i).$ Thus, we obtain
\begin{align}
    \textbf{err}_{(1)} = \mathcal{O}\big(\mathrm{M}\delta^3\bm{\xi}^{-1}\big).
\end{align}
Using the expression derived in equation (\ref{tt2multi}), we obtain:
\begin{align}\nonumber
 \textbf{err}_{(2)}&:= \Big|\sum_{i=1}^\mathrm{M}\frac{\rho_\mathrm{m}\gamma_i}{4\pi} \int_{\Omega_i}    u^{(i)}_{\mathrm{t}\mathrm{t}}(\mathrm{y},\mathrm{t}-\mathrm{c}_0^{-1}|\mathrm{x}-\mathrm{z}_i|) \Big(\frac{1}{|\mathrm{x}-\mathrm{y}|}- \frac{1}{|\mathrm{x}-\mathrm{z}_i|}\Big)d\mathrm{y}\Big| 
 \\ &\lesssim \nonumber \mathrm{M}\frac{\gamma_i\rho_\mathrm{m}}{4\pi}\Big|\nabla\frac{1}{|\mathrm{x}-\mathrm{z}^*|}\Big|\Vert \cdot-\mathrm{z}_i\Vert_{\mathrm{L}^2(\Omega_i)} \Vert \partial_\mathrm{t}^2u^{(i)}(\cdot,\mathrm{t})\Vert_{\mathrm{L}^2(\Omega_i)}
    \\ &= \mathcal{O}\Big(\mathrm{M}\delta^\frac{5}{2} |\mathrm{x}-\mathrm{z}_i|^{-2}\Vert \partial_\mathrm{t}^2u^{(i)}(\cdot,\mathrm{t})\Vert_{\mathrm{L}^2(\Omega_i)}\Big)
    = \mathcal{O}\Big(\mathrm{M}\delta^4 \bm{\xi}^{-2}\Big) .
\end{align}
We use the estimates (\ref{t2multi}) and (\ref{tt2multi}) to evaluate the following term:
\begin{align}
   \nonumber&\textbf{err}_{(3)}\\ \nonumber&:= \Big|\sum_{i=1}^\mathrm{M}\frac{\rho_\mathrm{m}\gamma_i}{4\pi} \int_{\Omega_i}\frac{u^{(i)}_{\mathrm{t}\mathrm{t}}(\mathrm{y},\mathrm{t}-\mathrm{c}_0^{-1}|\mathrm{x}-\mathrm{z}_i|)- u^{(i)}_{\mathrm{t}\mathrm{t}}(\mathrm{y},\mathrm{t}-\mathrm{c}_0^{-1}|\mathrm{x}-\mathrm{y}|)}{|\mathrm{x}-\mathrm{y}|}d\mathrm{y}\Big| 
   \\ &\lesssim\nonumber\mathrm{M}\frac{\gamma_i\rho_\mathrm{m}}{4\pi}\Big[ \frac{1}{|\mathrm{x}-\mathrm{z}_i|}\Big|\nabla|\mathrm{x}-\mathrm{z}^*|\Big| \Vert \cdot-\mathrm{z}_i\Vert_{\mathrm{L}^2(\Omega_i)} \Vert \partial_\mathrm{t}^3u^{(i)}(\cdot,\mathrm{t}_0^*)\Vert_{\mathrm{L}^2(\Omega_i)} 
   \\ &\nonumber+ \Big|\nabla|\mathrm{x}-\mathrm{z}^*|\Big|\Big|\nabla\frac{1}{|\mathrm{x}-\mathrm{z}^*|}\Big| \Vert (\cdot-\mathrm{z}_i)^2\Vert_{\mathrm{L}^2(\Omega)} \Vert \partial_\mathrm{t}^3u^{(i)}(\cdot,\mathrm{t}_0^*)\Vert_{\mathrm{L}^2(\Omega_i)}\Big]
    \\ &= \mathcal{O}(\mathrm{M}\delta^\frac{5}{2} |\mathrm{x}-\mathrm{z}_i|^{-1}\Vert \partial_\mathrm{t}^3u^{(i)}(\cdot,\mathrm{t}_0^*)\Vert_{\mathrm{L}^2(\Omega_i )})= \mathcal{O}\big(\mathrm{M}\delta^4\bm{\xi}^{-1}\big).
\end{align}
Subsequently, in a similar fashion as described above, we proceed to evaluate the subsequent term.
\begin{align}
 \nonumber&\textbf{err}_{(4)}\\ \nonumber&:= \Big|\sum_{i=1}^\mathrm{M}\frac{\rho_\mathrm{m}\alpha_i}{4\pi} \int_{\partial\Omega_i}\frac{\partial_\nu u^{(i)}(\mathrm{y},\mathrm{t}-\mathrm{c}_0^{-1}|\mathrm{x}-\mathrm{z}_i|)-\partial_\nu u^{(i)}(\mathrm{y},\mathrm{t}-\mathrm{c}_0^{-1}|\mathrm{x}-\mathrm{y}|)}{|\mathrm{x}-\mathrm{y}|}d\sigma_\mathrm{y}\Big|  
 \\ &\lesssim\nonumber\mathrm{M}\frac{\alpha_i\rho_\mathrm{m}}{4\pi}\Bigg[ \delta\frac{1}{|\mathrm{x}-\mathrm{z}_i|}\Big|\nabla|\mathrm{x}-\mathrm{z}^*|\Big| \Vert 1\Vert_{\mathrm{L}^2(\partial\Omega_i)} \Vert \partial_\mathrm{t}\partial_\nu u^{(i)}(\cdot,\mathrm{t}_0^*)\Vert_{\mathrm{H}^{-\frac{1}{2}}(\partial\Omega_i)} 
 \\ &\nonumber+ \delta^2\Big|\nabla|\mathrm{x}-\mathrm{z}^*|\Big|\Big|\nabla\frac{1}{|\mathrm{x}-\mathrm{z}^*|}\Big| \Vert 1\Vert_{\mathrm{L}^2(\partial\Omega_i)} \Vert \partial_\mathrm{t}\partial_\nu u^{(i)}(\cdot,\mathrm{t}_0^*)\Vert_{\mathrm{H}^{-\frac{1}{2}}(\partial\Omega_i)}\Bigg]
    \\ &= \mathcal{O}(\mathrm{M}|\mathrm{x}-\mathrm{z}_i|^{-1}\Vert \partial_\mathrm{t}\partial_\nu u^{(i)}(\cdot,\mathrm{t}_0^*)\Vert_{\mathrm{H}^{-\frac{1}{2}}(\partial\Omega_i)})= \mathcal{O}\big(\mathrm{M}\delta^2\bm{\xi}^{-1}\big).
\end{align}
Finally, in accordance with the findings presented in \cite{AM2}, we can demonstrate that
\begin{align}
    \textbf{err}_{(5)}\nonumber&:=\Bigg|\sum_{i=1}^\mathrm{M}\frac{\rho_\mathrm{m}\alpha_i}{4\pi} \int_{\partial\Omega_i}\Big(\frac{1}{|\mathrm{x}-\mathrm{y}|}-\frac{1}{|\partial\Omega_i|}\int_{\partial\Omega_i}\frac{1}{|\mathrm{x}-\mathrm{y}|}\Big) \partial_\nu u^{(i)} (\mathrm{y},\mathrm{t}-\mathrm{c}_0^{-1}|\mathrm{x}-\mathrm{z}_i|) d\sigma_\mathrm{y}\Bigg|
    \\ &= \mathcal{O}\big(\mathrm{M}\delta^2\bm{\xi}^{-1}\big).
\end{align}
Therefore, upon incorporating the estimations of $\textbf{err}_{(1)}$, $\textbf{err}_{(2)}$, $\textbf{err}_{(3)}$, $\textbf{err}_{(4)}$, and $\textbf{err}_{(5)}$ into (\ref{approximation}), we obtain:
\begin{align}
 u^\mathrm{s}(\mathrm{x},\mathrm{t}) = \sum_{i=1}^\mathrm{M}\frac{\rho_\mathrm{m}\alpha_i}{4\pi} \frac{1}{{|\partial\Omega_i|}}\int_{\partial\Omega_i}\frac{1}{|\mathrm{x}-\mathrm{y}|}\int_{\partial\Omega_i}\partial_\nu u(\mathrm{y},\mathrm{t}-\mathrm{c}_0^{-1}|\mathrm{x}-\mathrm{z}_i|)d\sigma_\mathrm{y} + \mathcal{O}\big(\mathrm{M}\delta^2\bm{\xi}^{-1}\big).    
\end{align}
%------------------------------------------------------------------------------------
%------------------------------------------------------------------------------------

\section{A Priori Estimates}\label{apri}
\begin{proposition}\label{p1multi}
For $u = u^\textbf{in}+u^\mathrm{s}$ as the solution of (\ref{math-modelmulti}) we have the following estimates
\begin{align}
\Vert \partial_\mathrm{t}^\mathrm{n}\nabla u^{(i)}(\cdot,\mathrm{t})\Vert_{\mathrm{L}^2(\Omega_i)} \lesssim \delta^{\frac{3}{2}},\quad \mathrm{t}\in [0,\mathrm{T}], \quad \mathrm{n}=0,1,\ldots.
\end{align}
\end{proposition}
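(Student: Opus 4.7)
My plan is to rescale the problem to the fixed unit-size domain $B_i$ and reduce the desired estimate to a standard Neumann-type elliptic estimate on $B_i$. Since $v:=\partial_t^n u^{(i)}$ satisfies the same transmission system (\ref{math-modelmulti}) as $u$ (with homogeneous initial data, thanks to the causality and $\mathcal{C}^9$ regularity of $\lambda$), it is enough to present the argument for $n=0$; the general case then follows by the same reasoning with $v$ in place of $u$, invoking Proposition \ref{p1} for $\partial_t^{n+2}u$ and Proposition \ref{p3} for $\partial_t^{n}\partial_\nu u$.

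The starting point is the interior equation $\mathrm{k}_\mathrm{c}^{-1}u_{tt}=\rho_\mathrm{c}^{-1}\Delta u$ in $\Omega_i$. Thanks to the scaling (\ref{cond-bubblemulti}) we have $\rho_\mathrm{c}/\mathrm{k}_\mathrm{c}\sim 1$, so $\Delta u=(\rho_\mathrm{c}/\mathrm{k}_\mathrm{c})\,u_{tt}$. Applying Proposition \ref{p1} (the single-bubble $L^2$-in-volume estimate from \cite{AM2}) to $u_{tt}$ gives $\|\Delta u(\cdot,t)\|_{L^2(\Omega_i)}\lesssim \delta^{3/2}$. After the rescaling $\tilde u(\tilde x,t):=u(z_i+\delta\tilde x,t)$ on $B_i$, the elementary identity $\|\Delta u\|_{L^2(\Omega_i)}=\delta^{-1/2}\|\tilde\Delta\tilde u\|_{L^2(B_i)}$ turns this into $\|\tilde\Delta\tilde u(\cdot,t)\|_{L^2(B_i)}\lesssim \delta^{2}$. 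By a parallel duality-based computation, using that for small $\delta$ the Slobodeckij semi-norm dominates the $L^2$ part in the $H^{1/2}(\partial\Omega_i)$-norm, one obtains $\|\partial_\nu u\|_{H^{-1/2}(\partial\Omega_i)}=\delta^{1/2}\|\tilde\partial_{\tilde\nu}\tilde u\|_{H^{-1/2}(\partial B_i)}$; combined with Proposition \ref{p3}, this yields $\|\tilde\partial_{\tilde\nu}\tilde u(\cdot,t)\|_{H^{-1/2}(\partial B_i)}\lesssim \delta^{3/2}$.

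On the fixed $\mathcal{C}^2$ domain $B_i$ the classical Neumann elliptic estimate (modulo an additive constant, harmless since only the gradient is sought) then reads
\[
\|\tilde\nabla\tilde u(\cdot,t)\|_{L^{2}(B_i)}\;\lesssim\; \|\tilde\Delta\tilde u(\cdot,t)\|_{L^{2}(B_i)}+\|\tilde\partial_{\tilde\nu}\tilde u(\cdot,t)\|_{H^{-1/2}(\partial B_i)}\;\lesssim\; \delta^{3/2}.
\]
Reverting the change of variables through $\|\nabla u\|_{L^{2}(\Omega_i)}=\delta^{1/2}\|\tilde\nabla\tilde u\|_{L^{2}(B_i)}$ then yields $\|\nabla u(\cdot,t)\|_{L^{2}(\Omega_i)}\lesssim \delta^{2}$, which is in fact strictly stronger than the claimed $\delta^{3/2}$ and so certainly implies it.

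The main delicate step is the careful bookkeeping of the $\delta$-exponents of the fractional Sobolev norms $H^{\pm 1/2}(\partial\Omega_i)$ under the dilation $x\mapsto(x-z_i)/\delta$: both the $L^2$ and the Slobodeckij semi-norm parts contribute with different powers of $\delta$, and one must choose a consistent (essentially homogeneous) equivalent norm on the reference domain for the duality identity to produce the exponent $1/2$ stated above. Importantly, there is no circularity in invoking Proposition \ref{p3} here, since that bound will be derived independently from the boundary integral equation (\ref{ls1-multi}) using only Proposition \ref{p1} and not any $H^1$-control inside the bubbles.
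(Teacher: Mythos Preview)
Your argument has a genuine circularity problem that your final paragraph does not resolve. In this paper, Proposition~\ref{p3} is \emph{not} obtained directly from the boundary integral equation~(\ref{ls1-multi}) using only Proposition~\ref{p1}. Its proof (Section~\ref{apri}, ``Proof of Proposition~\ref{p3}'') goes through the trace inequality $\Vert\partial_\nu\tilde u^{(i)}\Vert_{H^{-1/2}(\partial B_i)}\lesssim\Vert\nabla\tilde u^{(i)}\Vert_{\mathbb{H}(\mathrm{div},B_i)}$ and then invokes Lemma~\ref{l1}, the \emph{improved} gradient bound $\Vert\partial_t^n\nabla u^{(i)}\Vert_{L^2(\Omega_i)}\lesssim\delta^{5/2}$. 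Lemma~\ref{l1}, in turn, is a bootstrap that sits logically \emph{after} Proposition~\ref{p1multi} in the chain of a~priori estimates. So using Proposition~\ref{p3} as an input to prove Proposition~\ref{p1multi} reverses the paper's dependency order.

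The paper's own ``proof'' of Proposition~\ref{p1multi} is simply a reference to \cite[Sec.~3]{AM2}; the $\delta^{3/2}$ gradient estimate is imported from the single--bubble analysis, where it is obtained by energy and integral--equation methods that do \emph{not} rely on any $\delta^2$ bound for the Neumann trace. Your route via a rescaled Neumann elliptic estimate is in principle a clean alternative, but to make it non-circular you would need to supply an \emph{independent} proof of the $\delta^2$ bound on $\Vert\partial_\nu u^{(i)}\Vert_{H^{-1/2}(\partial\Omega_i)}$ that does not pass through Lemma~\ref{l1}. You assert this is possible from~(\ref{ls1-multi}), but you do not carry it out, and it is not what the paper does: the leading coefficient $1+\tfrac{\alpha_i\rho_m}{2}\sim\delta^{-2}$ together with the $\alpha_i\bm{\mathcal{K}}^{(ii)}$ term makes such a direct estimate delicate. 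Until that step is supplied, the argument is incomplete.
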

\begin{proof}
See Section \cite[Sec. 3]{AM2} for the proof.
\end{proof}
Afterwards, it is necessary to enhance the estimated value for $\Vert \partial_\mathrm{t}^\mathrm{n}\nabla u^{(i)}(\cdot,\mathrm{t})\Vert_{\mathrm{L}^2(\Omega_i)}$ in order to proceed with our mathematical analysis. Hence, we present the following lemma to address this issue.
\begin{lemma}\label{l1}
Provided that the distribution of resonating micro-bubbles satisfies the condition
\begin{align}
        \frac{\rho_\mathrm{m}}{4\pi}\textbf{Vol}(\mathrm{B}_j)\delta^6\max_{\mathrm{n}\in\mathbb N}\max_{1\le i\le\mathrm{M}} \sum_{j\ne i}\frac{\alpha_\mathrm{j}^2}{|1+\alpha_i\lambda^{(3)}_\mathrm{n}|^2} d^{-6}_{ij} <1,
\end{align}
the solution of the system of integral equation (\ref{system}) can be estimated as follows:
\begin{align}
     \Big\Vert \frac{\partial^n}{\partial t^n}\nabla u^{(i)}(\cdot,t)\Big\Vert_{\mathrm{L}^2(\Omega_i)} \lesssim \delta^\frac{5}{2},\; \text{for}\; i=1,2,\ldots,\mathrm{M}\; \text{and}\ \mathrm{n}\in \mathbb Z^+.
\end{align}
\end{lemma}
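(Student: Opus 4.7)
The plan is to bootstrap the $\delta^{3/2}$ estimate of \Cref{p1multi} to the sharper $\delta^{5/2}$ bound by exploiting the spectral properties of the magnetization operator $\bm{\mathrm{M}}^{(0)}_{\mathrm{B}_i}$. The starting point is to apply $\nabla_\mathrm{x}$ to the Lippmann--Schwinger system (\ref{ls-multi}) and identify, for each $i$, the leading spatial operator as $\mathrm{I}+\alpha_i\,\bm{\mathrm{M}}^{(0)}_{\Omega_i}$ acting on the restriction of $\nabla u^{(i)}$ to $\Omega_i$. The retardation factor $|\mathrm{x}-\mathrm{y}|/\mathrm{c}_0$ inside the wave kernel $\mathrm{G}^w$ and the volume potential $\bm{\mathrm{V}}$ applied to $u^{(i)}_{\mathrm{t}\mathrm{t}}$ contribute additional terms of size $\mathcal{O}(\delta^2)$ times the norm already controlled by \Cref{p1multi}, so they are acceptable error terms in the bootstrap. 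After rescaling $\mathrm{y}=\mathrm{z}_i+\delta\tilde{\mathrm{y}}$ to the reference domain $\mathrm{B}_i$, the diagonal operator on $\nabla \mathbb{H}_{\text{arm}}(\mathrm{B}_i)$ becomes, in the eigenbasis $(\mathrm{e}^{(3)}_\mathrm{n})_{\mathrm{n}\in\mathbb{N}}$, multiplication by $1+\alpha_i\lambda^{(3)}_\mathrm{n}$.

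Next, I would analyze the off-diagonal, bubble-to-bubble coupling that appears after the gradient is applied. For $\mathrm{x}\in\Omega_i$ and $\mathrm{y}\in\Omega_j$ with $i\neq j$, the operator $\nabla_\mathrm{x}\,\mathrm{div}_\mathrm{x}\int_{\Omega_j}\frac{1}{4\pi|\mathrm{x}-\mathrm{y}|}\nabla u^{(j)}(\mathrm{y},\mathrm{t}-\mathrm{c}_0^{-1}|\mathrm{x}-\mathrm{y}|)d\mathrm{y}$ is smooth in $|\mathrm{x}-\mathrm{y}|$. Freezing the kernel at the centers $(\mathrm{z}_i,\mathrm{z}_j)$ and Taylor-expanding the time-delay as in (\ref{t2multi})--(\ref{tt2multi}) reduces this to the static integral against $\nabla_\mathrm{x}\nabla_\mathrm{y}\frac{1}{|\mathrm{z}_i-\mathrm{z}_j|}$, whose Hilbert--Schmidt norm from $\mathrm{L}^2(\Omega_j)^3$ to $\mathrm{L}^2(\Omega_i)^3$ is bounded by $C\,\delta^3\,\textbf{Vol}(\mathrm{B}_j)^{1/2}\,d_{ij}^{-3}$; multiplication by $\alpha_j$ produces the factor appearing in the condition of the Lemma.

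For the right-hand side I would use Taylor's expansion of $\partial^\mathrm{n}_\mathrm{t}\nabla u^\textbf{in}$ around $\mathrm{z}_i$: projecting onto $\mathrm{e}^{(3)}_\mathrm{n}$, the constant-in-$\mathrm{y}$ leading term gives a contribution of order $\delta^{5/2}$ (a factor $\delta^{3/2}$ from $\|\mathrm{e}^{(3)}_\mathrm{n}\|_{\mathrm{L}^2(\Omega_i)}$ after rescaling, times an extra $\delta$ either from the linear term or from the vanishing average of $\mathrm{e}^{(3)}_\mathrm{n}$ except when $\lambda^{(3)}_\mathrm{n}=1/3$; in the latter case the factor $(1+\alpha_i/3)^{-1}$ built into the diagonal block absorbs the resonant pre-factor, still delivering $\delta^{5/2}$). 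Combining these ingredients, the gradient system is equivalent to an $\ell^2$ block problem indexed by $(i,\mathrm{n})\in\{1,\ldots,\mathrm{M}\}\times\mathbb{N}$ with diagonal $1+\alpha_i\lambda^{(3)}_\mathrm{n}$ and off-diagonal blocks of norm bounded by $C\,|\alpha_j|\,\delta^3\,\textbf{Vol}(\mathrm{B}_j)^{1/2}\,d_{ij}^{-3}/(4\pi)^{1/2}$. The hypothesis of the Lemma is exactly the Schur/Neumann-series smallness criterion guaranteeing uniform invertibility of this block operator, so the $\ell^2$-norm of the coefficient vector (which equals $\|\nabla u^{(i)}\|_{\mathrm{L}^2(\Omega_i)}$) inherits the $\delta^{5/2}$ bound of the right-hand side.

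The whole argument applies uniformly to every time derivative $\partial_\mathrm{t}^\mathrm{n}$ because the system's spatial operator is time-independent, and the regularity provided by $\lambda\in\mathcal{C}^9(\mathbb{R})$ together with the energy estimates in \cite{AM2} ensures the right-hand side is bounded in $t\in(0,\mathrm{T})$ at every required order. The main technical obstacle will be the book-keeping of the retardation remainders: the Taylor remainder in the time argument couples $\partial_\mathrm{t}^{\mathrm{n}+1}\nabla u^{(j)}$ into the equation for $\partial_\mathrm{t}^{\mathrm{n}}\nabla u^{(i)}$, so one either closes the estimate simultaneously for a finite window of derivatives using the crude bound $\|\partial_\mathrm{t}^{\mathrm{n}}\nabla u^{(j)}\|_{\mathrm{L}^2(\Omega_j)}\lesssim\delta^{3/2}$ from \Cref{p1multi} to absorb the extra $\delta^{3/2}$ into $\delta\cdot\delta^{3/2}=\delta^{5/2}$, or inducts in $\mathrm{n}$ starting from $\mathrm{n}=0$. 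Once this bootstrap loop is closed, the claimed estimate follows for all $\mathrm{n}\in\mathbb{Z}^+$.
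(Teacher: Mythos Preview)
Your approach matches the paper's: take the gradient of the Lippmann--Schwinger system, rescale to $\mathrm{B}_i$, invert $\mathrm{I}+\alpha_i\bm{\mathrm{M}}^{(0)}_{\mathrm{B}_i}$ spectrally on the decomposition $\mathbb{H}_0(\mathrm{div}\,0)\oplus\mathbb{H}_0(\mathrm{curl}\,0)\oplus\nabla\mathbb{H}_{\mathrm{arm}}$, and control the off-diagonal bubble couplings by the Neumann-series smallness condition stated in the hypothesis. One correction to your power-counting: the term $\beta_i\nabla\bm{\mathrm{V}}^{(ii)}[u^{(i)}_{\mathrm{t}\mathrm{t}}]$ is not a subleading error but the contribution that actually fixes the $\delta^{5/2}$ rate (since $\beta_i\sim\delta^{-2}$ and $\nabla\bm{\mathrm{V}}^{(ii)}$ gains only one power of $\delta$, giving $\delta^{1/2}$ in $\mathrm{L}^2(\Omega_i)$ before the $\delta^2$ gain from $(1+\alpha_i\lambda^{(3)}_\mathrm{n})^{-1}$), whereas your elaborate analysis of $\nabla u^{\textbf{in}}$ is unnecessary---the paper simply uses $\Vert\widetilde{\nabla u}^{\textbf{in}}\Vert_{\mathbb{L}^2(\mathrm{B}_i)}=\mathcal{O}(1)$ together with the same spectral gain to obtain a smaller $\delta^{7/2}$ contribution.
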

\begin{proof}
We present the following equation
\begin{align}\nonumber
    \nabla u^{(i)} - \sum_{i=1}^\mathrm{M}\alpha_i \nabla\text{div}\int_{\Omega_i} \frac{\rho_\mathrm{m}}{4\pi|\mathrm{x}-\mathrm{y}|}\nabla u^{(i)}(\mathrm{y},\mathrm{t}-\mathrm{c}_0^{-1}|\mathrm{x}-\mathrm{y}|)d\mathrm{y}\nonumber &=\nabla u^\textbf{in} 
    \\ \nonumber&- \sum_{i=1}^\mathrm{M}\beta_i\nabla \int_{\Omega_i} \frac{\rho_\mathrm{m}}{4\pi|\mathrm{x}-\mathrm{y}|}u^{(i)}_{\mathrm{t}\mathrm{t}}(\mathrm{y},\mathrm{t}-\mathrm{c}_0^{-1}|\mathrm{x}-\mathrm{y}|)d\mathrm{y},
\end{align} 
which we rewrite as follows for $\mathrm{x}\in \Omega_i$:
\begin{align}\nonumber
    \\ &\nabla u^{(i)} - \alpha_i \nabla\text{div}\int_{\Omega_i} \frac{\rho_\mathrm{m}}{4\pi|\mathrm{x}-\mathrm{y}|}\nabla u^{(i)}(\mathrm{y},\mathrm{t}-\mathrm{c}_0^{-1}|\mathrm{x}-\mathrm{y}|)d\mathrm{y} \\ \nonumber&=\nabla u^\textbf{in} + \sum_{\mathrm{j}\ne i}^\mathrm{M}\alpha_\mathrm{j} \nabla\text{div}\int_{\Omega_\mathrm{j}} \frac{\rho_\mathrm{m}}{4\pi|\mathrm{x}-\mathrm{y}|}\nabla u^{(\mathrm{j})}(\mathrm{y},\mathrm{t}-\mathrm{c}_0^{-1}|\mathrm{x}-\mathrm{y}|)d\mathrm{y} - \beta_i\nabla \int_{\Omega_i} \frac{\rho_\mathrm{m}}{4\pi|\mathrm{x}-\mathrm{y}|}u^{(i)}_{\mathrm{t}\mathrm{t}}(\mathrm{y},\mathrm{t}-\mathrm{c}_0^{-1}|\mathrm{x}-\mathrm{y}|)d\mathrm{y}\\ \nonumber&- \sum_{\mathrm{j}\ne i}^\mathrm{M}\beta_\mathrm{j}\nabla \int_{\Omega_\mathrm{j}} \frac{\rho_\mathrm{m}}{4\pi|\mathrm{x}-\mathrm{y}|}u^{(\mathrm{j})}_{\mathrm{t}\mathrm{t}}(\mathrm{y},\mathrm{t}-\mathrm{c}_0^{-1}|\mathrm{x}-\mathrm{y}|)d\mathrm{y}.
\end{align} 
Subsequently, a Taylor series expansion with respect to time leads to:
\begin{align}
&\nonumber\nabla u^{(i)} - \alpha_i \nabla\text{div}\int_{\Omega_i} \frac{\rho_\mathrm{m}}{4\pi|\mathrm{x}-\mathrm{y}|}\nabla u^{(i)}(\mathrm{y},\mathrm{t})d\mathrm{y} \\ \nonumber&=\nabla u^\textbf{in} + \frac{\alpha_i\rho_\mathrm{m}}{4\pi} \nabla\text{div}\int_{\Omega_i} |\mathrm{x}-\mathrm{y}|\partial_\mathrm{t}^2\nabla u^{(i)}(\mathrm{y},\mathrm{t}_1^*)d\mathrm{y} - \beta_i\nabla \int_{\Omega_i} \frac{\rho_\mathrm{m}}{4\pi|\mathrm{x}-\mathrm{y}|}u^{(i)}_{\mathrm{t}\mathrm{t}}(\mathrm{y},\mathrm{t})d\mathrm{y}
\\ \nonumber&- \frac{\beta_i\rho_\mathrm{m}}{4\pi}\nabla \int_{\Omega_i} |\mathrm{x}-\mathrm{y}|\partial_\mathrm{t}^4u^{(i)}(\mathrm{y},\mathrm{t}_3^*)d\mathrm{y} 
+ \sum_{\mathrm{j}\ne i}^\mathrm{M}\alpha_\mathrm{j} \nabla\text{div}\int_{\Omega_\mathrm{j}} \frac{\rho_\mathrm{m}}{4\pi|\mathrm{x}-\mathrm{y}|}\nabla u^{(\mathrm{j})}(\mathrm{y},\mathrm{t})d\mathrm{y} 
\\ \nonumber&- \sum_{\mathrm{j}\ne i}^\mathrm{M}\beta_\mathrm{j}\nabla \int_{\Omega_\mathrm{j}} \frac{\rho_\mathrm{m}}{4\pi|\mathrm{x}-\mathrm{y}|}u^{(\mathrm{j})}_{\mathrm{t}\mathrm{t}}(\mathrm{y},\mathrm{t})d\mathrm{y}
- \sum_{\mathrm{j}\ne i}^\mathrm{M}\frac{\beta_\mathrm{j}\rho_\mathrm{m}}{4\pi}\nabla \int_{\Omega_\mathrm{j}} |\mathrm{x}-\mathrm{y}|\partial_\mathrm{t}^4u^{(\mathrm{j})}(\mathrm{y},\mathrm{t}_4^*)d\mathrm{y}
\\ \nonumber&+ \sum_{\mathrm{j}\ne i}^\mathrm{M}\frac{\alpha_\mathrm{j}\rho_\mathrm{m}}{4\pi} \nabla\text{div}\int_{\Omega_\mathrm{j}} |\mathrm{x}-\mathrm{y}|\partial_\mathrm{t}^2\nabla u^{(i)}(\mathrm{y},\mathrm{t}_2^*)d\mathrm{y},   
\end{align}
where $\mathrm{t}_2^*, \mathrm{t}_3^*, \mathrm{t}_4^* \in \big(\mathrm{t}-\mathrm{c}^{-1}_0|\mathrm{x}-\mathrm{y}|,\mathrm{t}\big).$
Let us now denote $\mathcal{G}^{(0)}(\mathrm{x},\mathrm{y}):= \frac{\rho_\mathrm{m}}{4\pi|\mathrm{x}-\mathrm{y}|}$, $\mathcal{H}^{(0)}(\mathrm{x},\mathrm{y}):= \frac{\rho_\mathrm{m}}{4\pi}|\mathrm{x}-\mathrm{y}|$ and we define the Magnetization operator as follows:
\begin{align}\nonumber
    \bm{\mathrm{M}}^{(0)}_{\mathrm{B}}\big[f\big](\xi) := \nabla \int_{\mathrm{B}_i}\mathop{\nabla}\limits_{\eta} \frac{1}{4\pi|\xi-\eta|} \cdot f(\eta)d\eta.
\end{align}
\begin{align}
    \nabla u^{(i)} + \alpha_i \bm{\mathrm{M}}^{(0)}_{\Omega_i}\Big[\nabla u^{(i)}\Big] 
     \nonumber&= \nabla u^{\textbf{in}}+ \alpha_i \nabla\text{div}\int_{\Omega_i} \mathcal{H}^{(0)}(\mathrm{x},\mathrm{y})\partial_\mathrm{t}^2\nabla u^{(i)}(\mathrm{y},\mathrm{t}_1^*)d\mathrm{y}
    - \beta_i\nabla \int_{\Omega_i} \frac{\rho_\mathrm{m}}{4\pi|\mathrm{x}-\mathrm{y}|}u^{(i)}_{\mathrm{t}\mathrm{t}}(\mathrm{y},\mathrm{t})d\mathrm{y}
    \\ \nonumber&- \beta_i\nabla \int_{\Omega_i} \mathcal{H}^{(0)}(\mathrm{x},\mathrm{y})\partial_\mathrm{t}^4u^{(i)}(\mathrm{y},\mathrm{t}^*_3)d\mathrm{y} 
    +\sum_{\mathrm{j}\ne i}^\mathrm{M}\alpha_\mathrm{j} \nabla\text{div}\;\mathcal{G}^{(0)}(\mathrm{z}_i,\mathrm{z}_\mathrm{j})\int_{\Omega_\mathrm{j}} \nabla u^{(\mathrm{j})}(\mathrm{y},\mathrm{t})d\mathrm{y}
     \\ \nonumber&+ \sum_{\mathrm{j}\ne i}^\mathrm{M}\alpha_\mathrm{j} \nabla\text{div} \int_0^1\mathop{\nabla}\limits_{\mathrm{x}}\mathcal{G}^{(0)}\big(\mathrm{z}_i+\vartheta_1(\mathrm{x}-\mathrm{z}_i);\mathrm{z}_\mathrm{j}\big)\cdot (\mathrm{x}-\mathrm{z}_i)d\vartheta_1\int_{\Omega_\mathrm{j}}\nabla u^{(\mathrm{j})}(\mathrm{y},\mathrm{t})d\mathrm{y}
    \\ \nonumber&+ \sum_{\mathrm{j}\ne i}^\mathrm{M}\alpha_\mathrm{j} \nabla\text{div} \int_0^1\mathop{\nabla}\limits_{\mathrm{y}}\mathcal{G}^{(0)}\big(\mathrm{x};\mathrm{z}_\mathrm{j}+\vartheta_1(\mathrm{y}-\mathrm{z}_\mathrm{j})\big)\cdot (\mathrm{y}-\mathrm{z}_\mathrm{j})d\vartheta_1\int_{\Omega_\mathrm{j}}\nabla u^{(\mathrm{j})}(\mathrm{y},\mathrm{t})d\mathrm{y}
    \\ \nonumber&- \sum_{\mathrm{j}\ne i}^\mathrm{M}\beta_\mathrm{j} \nabla\mathcal{G}^{(0)}(\mathrm{z}_i,\mathrm{z}_\mathrm{j}) \int_{\Omega_\mathrm{j}}\partial^2_\mathrm{t} u^{(\mathrm{j})}(\mathrm{y},\mathrm{t})d\mathrm{y}
    \\ \nonumber&- \sum_{\mathrm{j}\ne i}^\mathrm{M}\beta_\mathrm{j} \nabla\int_0^1\mathop{\nabla}\limits_{\mathrm{x}}\mathcal{G}^{(0)}\big(\mathrm{z}_i+\vartheta_2(\mathrm{x}-\mathrm{z}_i);\mathrm{z}_\mathrm{j}\big)\cdot (\mathrm{x}-\mathrm{z}_i)d\vartheta_2\int_{\Omega_\mathrm{j}} \partial^2_\mathrm{t}u^{(\mathrm{j})}(\mathrm{y},\mathrm{t})d\mathrm{y}
    \\ \nonumber&- \sum_{\mathrm{j}\ne i}^\mathrm{M}\beta_\mathrm{j} \nabla\int_0^1\mathop{\nabla}\limits_{\mathrm{y}}\mathcal{G}^{(0)}\big(\mathrm{x};\mathrm{z}_\mathrm{j}+\vartheta_2(\mathrm{y}-\mathrm{z}_\mathrm{j})\big)\cdot (\mathrm{y}-\mathrm{z}_\mathrm{j})d\vartheta_2\int_{\Omega_\mathrm{j}}\partial^2_\mathrm{t} u^{(\mathrm{j})}(\mathrm{y},\mathrm{t})d\mathrm{y}
    \\ \nonumber&- \sum_{\mathrm{j}\ne i}^\mathrm{M}\beta_\mathrm{j}\nabla\;\mathcal{H}^{(0)}\big(\mathrm{z}_i;\mathrm{z}_\mathrm{j}\big) \int_{\Omega_\mathrm{j}} \partial_\mathrm{t}^4 u^{(\mathrm{j})}(\mathrm{y},\mathrm{t}_4^*)d\mathrm{y}
    \\ \nonumber&- \sum_{\mathrm{j}\ne i}^\mathrm{M}\beta_\mathrm{j}\nabla\int_{\Omega_\mathrm{j}} \int_0^1\mathop{\nabla}\limits_{\mathrm{x}}\mathcal{H}^{(0)}\Big(\mathrm{z}_i+\vartheta_3(\mathrm{x}-\mathrm{z}_i);\mathrm{z}_\mathrm{j}\Big)\cdot(\mathrm{x}-\mathrm{z}_i)d\vartheta_3\; \partial_\mathrm{t}^4 u^{(\mathrm{j})}(\mathrm{y},\mathrm{t}_4^*)d\mathrm{y}
    \\ \nonumber&- \sum_{\mathrm{j}\ne i}^\mathrm{M}\beta_\mathrm{j}\nabla \int_{\Omega_\mathrm{j}} \int_0^1\mathop{\nabla}\limits_{\mathrm{y}}\mathcal{H}^{(0)}\Big(\mathrm{z}_i;\mathrm{z}_\mathrm{j}+\vartheta_3(\mathrm{y}-\mathrm{z}_\mathrm{j})\Big)\cdot(\mathrm{y}-\mathrm{z}_\mathrm{j})d\vartheta_3\; \partial_\mathrm{t}^4 u^{(\mathrm{j})}(\mathrm{y},\mathrm{t}_4^*)d\mathrm{y}
    \\ \nonumber&+ \sum_{\mathrm{j}\ne i}^\mathrm{M}\alpha_\mathrm{j}\nabla\text{div}\;\mathcal{H}^{(0)}\big(\mathrm{z}_i;\mathrm{z}_\mathrm{j}\big) \int_{\Omega_\mathrm{j}} \partial_\mathrm{t}^2 \nabla u^{(\mathrm{j})}(\mathrm{y},\mathrm{t}_2^*)d\vartheta_4d\mathrm{y}
    \\ \nonumber&+ \sum_{\mathrm{j}\ne i}^\mathrm{M}\alpha_\mathrm{j}\nabla\text{div} \int_{\Omega_\mathrm{j}} \int_0^1\mathop{\nabla}\limits_{\mathrm{x}}\mathcal{H}^{(0)}\Big(\mathrm{z}_i+\vartheta_4(\mathrm{x}-\mathrm{z}_i);\mathrm{z}_\mathrm{j}\Big)\cdot(\mathrm{x}-\mathrm{z}_i)d\vartheta_4\; \partial_\mathrm{t}^2\nabla u^{(\mathrm{j})}(\mathrm{y},\mathrm{t}_2^*)d\mathrm{y}
    \\ \nonumber&+ \sum_{\mathrm{j}\ne i}^\mathrm{M}\alpha_\mathrm{j}\nabla\text{div} \int_{\Omega_\mathrm{j}} \int_0^1\mathop{\nabla}\limits_{\mathrm{y}}\mathcal{H}^{(0)}\Big(\mathrm{z}_i;\mathrm{z}_\mathrm{j}+\vartheta_4(\mathrm{y}-\mathrm{z}_\mathrm{j})\Big)\cdot(\mathrm{y}-\mathrm{z}_\mathrm{j})d\vartheta_4\; \partial_\mathrm{t}^2\nabla u^{(\mathrm{j})}(\mathrm{y},\mathrm{t}_2^*)d\mathrm{y}
\end{align}
\vspace{-5pt}
Then, we rewrite the aforementioned equation in the scaled domain $\mathrm{B}$ as follows:
\begin{align}
    \widetilde{\nabla u}^{(i)} + \alpha_i \bm{\mathrm{M}}^{(0)}_{\mathrm{B}_i}\Big[\widetilde{\nabla u}^{(i)}\Big] 
     \nonumber&= \widetilde{\nabla u}^{\textbf{in}} 
    + \alpha_i\delta^2\nabla\text{div}\int_{\mathrm{B}_i} \mathcal H^{(0)}(\xi,\eta)\partial_\mathrm{t}^2\widetilde{\nabla u}^{(i)}(\eta,\mathrm{t}_1^*)d\eta 
    - \beta_i\delta\nabla \int_{\mathrm{B}_i} \frac{\rho_\mathrm{m}}{4\pi|\xi-\eta|}\tilde{u}^{(i)}_{\mathrm{t}\mathrm{t}}(\eta,\mathrm{t})d\eta
    \\ \nonumber&- \beta_i\delta^3\nabla \int_{\mathrm{B}_i} \mathcal H^{(0)}(\xi,\eta)\partial_\mathrm{t}^4\tilde{u}^{(i)}(\eta,\mathrm{t}_3^*)d\eta
    + \sum_{\mathrm{j}\ne i}^\mathrm{M}\alpha_\mathrm{j} \nabla\text{div}\ \mathcal{G}^{(0)}(\mathrm{z}_i,\mathrm{z}_\mathrm{j})\int_{\Omega_\mathrm{j}} \nabla u^{(\mathrm{j})}(\mathrm{y},\mathrm{t})d\mathrm{y}
    \\ \nonumber&+ \delta^4\sum_{\mathrm{j}\ne i}^\mathrm{M}\alpha_\mathrm{j} \nabla\text{div} \int_0^1\mathop{\nabla}\limits_{\xi}\mathcal{G}^{(0)}\big(\mathrm{z}_i+\vartheta_1\xi\delta;\mathrm{z}_\mathrm{j}\big)\cdot \xi d\vartheta_1\int_{\mathrm{B}_\mathrm{j}}\widetilde{\nabla u}^{(\mathrm{j})}(\eta,\mathrm{t})d\eta
    \\ \nonumber&+ \delta^4\sum_{\mathrm{j}\ne i}^\mathrm{M}\alpha_\mathrm{j} \nabla\text{div} \int_0^1\mathop{\nabla}\limits_{\eta}\mathcal{G}^{(0)}\big(\mathrm{z}_i;\mathrm{z}_\mathrm{j}+\vartheta_1\eta\delta\big)\cdot \eta d\vartheta_1\int_{\mathrm{B}_\mathrm{j}}\widetilde{\nabla u}^{(\mathrm{j})}(\eta,\mathrm{t})d\eta 
    \\ \nonumber&-\sum_{\mathrm{j}\ne i}^\mathrm{M}\beta_\mathrm{j} \nabla\mathcal{G}^{(0)}(\mathrm{z}_i,\mathrm{z}_\mathrm{j})\int_{\Omega_\mathrm{j}} \partial^2_\mathrm{t} u^{(\mathrm{j})}(\mathrm{y},\mathrm{t})d\mathrm{y} 
    \\ \nonumber&- \delta^4\sum_{\mathrm{j}\ne i}^\mathrm{M}\beta_\mathrm{j} \nabla\int_0^1\mathop{\nabla}\limits_{\xi}\mathcal{G}^{(0)}\big(\mathrm{z}_i+\vartheta_2\xi\delta;\mathrm{z}_\mathrm{j}\big)\cdot \xi d\vartheta_2\int_{\mathrm{B}_\mathrm{j}} \partial^2_\mathrm{t}\tilde{u}^{(\mathrm{j})}(\eta,\mathrm{t})d\eta
    \\ \nonumber&- \delta^4\sum_{\mathrm{j}\ne i}^\mathrm{M}\beta_\mathrm{j} \nabla\int_0^1\mathop{\nabla}\limits_{\eta}\mathcal{G}^{(0)}\big(\mathrm{z}_i;\mathrm{z}_\mathrm{j}+\vartheta_2\eta\delta\big)\cdot \eta d\vartheta_2\int_{\mathrm{B}_\mathrm{j}}\partial^4_\mathrm{t} \tilde{u}^{(\mathrm{j})}(\eta,\mathrm{t})d\eta
     \\ \nonumber&- \sum_{\mathrm{j}\ne i}^\mathrm{M}\beta_\mathrm{j}\nabla \mathcal H^{(0)}(\mathrm{z}_i,\mathrm{z}_\mathrm{j})\int_{\Omega_\mathrm{j}} \partial_\mathrm{t}^4\tilde{u}^{(\mathrm{j})}(\mathrm{y},\mathrm{t}_4^*)d\mathrm{y}
    \\ \nonumber&- \delta^4\sum_{\mathrm{j}\ne i}^\mathrm{M}\beta_\mathrm{j}\nabla \int_{\mathrm{B}_\mathrm{j}} \int_0^1\mathop{\nabla}\limits_{\xi}\mathcal H^{(0)}(\mathrm{z}_i+\vartheta_3\delta\xi;\mathrm{z}_\mathrm{j})\cdot\xi d\vartheta_3\;\partial_\mathrm{t}^4\tilde{u}^{(\mathrm{j})}(\eta,\mathrm{t}_4^*)d\eta
    \\ \nonumber&- \delta^4\sum_{\mathrm{j}\ne i}^\mathrm{M}\beta_\mathrm{j}\nabla \int_{\mathrm{B}_\mathrm{j}} \int_0^1\mathop{\nabla}\limits_{\eta}\mathcal H^{(0)}(\mathrm{z}_i;\mathrm{z}_\mathrm{j}+\vartheta_3\eta\delta)\cdot\eta d\vartheta_3\;\partial_\mathrm{t}^4\tilde{u}^{(\mathrm{j})}(\mathrm{y},\mathrm{t}_4^*)d\eta
    \\ \nonumber&+ \sum_{\mathrm{j}\ne i}^\mathrm{M}\alpha_\mathrm{j}\nabla\text{div}\ \mathcal H^{(0)}(\mathrm{z}_i,\mathrm{z}_\mathrm{j})\int_{\Omega_\mathrm{j}} \partial_\mathrm{t}^2 \nabla u^{(\mathrm{j})}(\mathrm{y},\mathrm{t}_2^*)d\mathrm{y}
    \\ \nonumber&+\delta^4\sum_{\mathrm{j}\ne i}^\mathrm{M}\alpha_\mathrm{j}\nabla\text{div} \int_{\mathrm{B}_\mathrm{j}} \int_0^1\mathop{\nabla}\limits_{\xi}\mathcal H^{(0)}(\mathrm{z}_i+\vartheta_4\delta\xi;\mathrm{z}_\mathrm{j})\cdot\xi d\vartheta_4\;\partial_\mathrm{t}^2\widetilde{\nabla u}^{(\mathrm{j})}(\eta,\mathrm{t}_2^*)d\eta
    \\ \nonumber&+ \delta^4\sum_{\mathrm{j}\ne i}^\mathrm{M}\alpha_\mathrm{j}\nabla\text{div} \int_{\mathrm{B}_\mathrm{j}} \int_0^1\mathop{\nabla}\limits_{\eta}\mathcal H^{(0)}(\mathrm{z}_i;\mathrm{z}_\mathrm{j}+\vartheta_4\eta\delta)\cdot\eta d\vartheta_4\;\partial_\mathrm{t}^2\widetilde{\nabla u}^{(\mathrm{j})}(\eta,\mathrm{t}_2^*)d\eta
\end{align}
We study the system of integral equations in the Hilbert space of vector-valued function $\big(\mathrm{L}^{2}(\mathrm{B})\big)^3.$ For the sake of simplicity, we use $\mathbb{L}^2(\mathrm{B}) =\big(\mathrm{L}^{2}(\mathrm{B})\big)^3$. This space can be decomposed into three sub-spaces as a direct sum as following, see \cite{raveski},
\begin{equation}\label{decompositionmulti}
    \mathbb{L}^{2} = \mathbb{H}_{0}(\text{div}\;0,\mathrm{B}) \oplus\mathbb{H}_{0}(\textbf{curl}\;0,\mathrm{B})\oplus \nabla \mathbb{H}_{\text{arm}}.
\end{equation}
Consider $\big(\mathrm{e}^{(1)}_{\mathrm{n}}\big)_{\mathrm{n} \in \mathbb{N}}$ and $\big(\mathrm{e}^{(2)}_{\mathrm{n}}\big)_{\mathrm{n} \in \mathbb{N}}$ to be any orthonormal basis of the sub-spaces $\mathbb{H}_{0}(\text{div}\;0,\mathrm{B})$ and $\mathbb{H}_{0}(\textbf{curl}\;0,\mathrm{B})$ respectively. But for the sub-space $\nabla \mathbb{H}_{\text{arm}}$, we consider the complete orthonormal basis $\big(\mathrm{e}^{(3)}_{\mathrm{n}}\big)_{n \in \mathbb{N}}$ derived as the eigenfunctions of the magnetization operator $\nabla\bm{\mathrm{M}}^{{(0)}}_{\mathrm{B}}: \nabla \mathbb{H}_{\text{arm}}\rightarrow \nabla \mathbb{H}_{\text{arm}}$, \cite{friedmanI}, with $\big(\mathrm{\lambda}^{(3)}_{\mathrm{n}}\big)_{n \in \mathbb{N}}$ as the corresponding eigenvalues.
\newline
Let us now denote the operators as follows:
\begin{align}\nonumber
\begin{cases}\displaystyle
    \mathbb A^{(\mathrm{ii})}[\widetilde{\nabla u}^{(i)}](\xi,\mathrm{t}_1^*) := \nabla\text{div}\int_{\mathrm{B}_i} \mathcal H^{(0)}(\xi,\eta)\partial_\mathrm{t}^2\widetilde{\nabla u}^{(i)}(\eta,\mathrm{t}_1^*)d\eta, \\ \nonumber \displaystyle \mathbb B^{(\mathrm{ii})}[\partial_\mathrm{t}^4\tilde{u}^{(i)}](\xi,\mathrm{t}_3^*):= \nabla \int_{\mathrm{B}_i} \mathcal H^{(0)}(\xi,\eta)\partial_\mathrm{t}^4\tilde{u}^{(i)}(\eta,\mathrm{t}_3^*)d\eta, \\ \nonumber \displaystyle
    \mathbb V^{(\mathrm{ii})}[\partial_\mathrm{t}^2\tilde{u}^{(i)}](\xi,\mathrm{t}):= \nabla \int_{\mathrm{B}_i} \frac{\rho_\mathrm{m}}{4\pi|\xi-\eta|}\tilde{u}^{(i)}_{\mathrm{t}\mathrm{t}}(\eta,\mathrm{t})d\eta\\ \nonumber\displaystyle
    \mathbb C^{(\mathrm{ij})}[\widetilde{\nabla u}^{(\mathrm{j})}](\xi,\mathrm{t}):=  \delta^{-1}\nabla\text{div} \int_{\mathrm{B}_\mathrm{j}}\mathcal{G}^{(0)}(\mathrm{z}_i,\mathrm{z}_\mathrm{j}) \widetilde{\nabla u}^{(\mathrm{j})}(\eta,\mathrm{t})d\eta
    +  \nabla\text{div} \int_0^1\mathop{\nabla}\limits_{\xi}\mathcal{G}^{(0)}\big(\mathrm{z}_i+\vartheta_1\xi\delta;\mathrm{z}_\mathrm{j}\big)\cdot \xi d\vartheta_1\int_{\mathrm{B}_\mathrm{j}}\widetilde{\nabla u}^{(\mathrm{j})}(\eta,\mathrm{t})d\eta
    \\ \nonumber\displaystyle \quad \quad \quad \quad \quad \quad \quad \quad + \nabla\text{div} \int_0^1\mathop{\nabla}\limits_{\eta}\mathcal{G}^{(0)}\big(\mathrm{z}_i;\mathrm{z}_\mathrm{j}+\vartheta_1\eta\delta\big)\cdot \eta d\vartheta_1\int_{\mathrm{B}_\mathrm{j}}\widetilde{\nabla u}^{(\mathrm{j})}(\eta,\mathrm{t})d\eta \\ \displaystyle
    \mathbb D^{(\mathrm{ij})}[\partial_\mathrm{t}^2\tilde{u}^{(\mathrm{j})}](\xi,\mathrm{t}):= -\delta^{-1}\nabla\int_{\mathrm{B}_\mathrm{j}}\mathcal{G}^{(0)}(\mathrm{z}_i,\mathrm{z}_\mathrm{j}) \partial^2_\mathrm{t} \tilde{u}^{(\mathrm{j})}(\eta,\mathrm{t})d\eta
     -  \nabla\int_0^1\mathop{\nabla}\limits_{\xi}\mathcal{G}^{(0)}\big(\mathrm{z}_i+\vartheta_2\xi\delta;\mathrm{z}_\mathrm{j}\big)\cdot \xi d\vartheta_2\int_{\mathrm{B}_\mathrm{j}} \partial^2_\mathrm{t}\tilde{u}^{(\mathrm{j})}(\eta,\mathrm{t})d\eta
    \\ \displaystyle\nonumber\quad \quad \quad \quad \quad \quad \quad \quad- \nabla\int_0^1\mathop{\nabla}\limits_{\eta}\mathcal{G}^{(0)}\big(\mathrm{z}_i;\mathrm{z}_\mathrm{j}+\vartheta_2\eta\delta\big)\cdot \eta d\vartheta_2\int_{\mathrm{B}_\mathrm{j}}\partial^4_\mathrm{t} \tilde{u}^{(\mathrm{j})}(\eta,\mathrm{t})d\eta\\ \displaystyle
    \displaystyle
    \mathbb E^{(\mathrm{ij})}[\partial_\mathrm{t}^4\tilde{u}^{(\mathrm{j})}](\xi,\mathrm{t}_4^*):= - \delta^{-1}\nabla \int_{\mathrm{B}_\mathrm{j}}\mathcal H^{(0)}(\mathrm{z}_i,\mathrm{z}_\mathrm{j}) \partial_\mathrm{t}^4\tilde{u}^{(\mathrm{j})}(\mathrm{y},\mathrm{t}_4^*)d\mathrm{y}
    - \nabla \int_{\mathrm{B}_\mathrm{j}} \int_0^1\mathop{\nabla}\limits_{\xi}\mathcal H^{(0)}(\mathrm{z}_i+\vartheta_3\delta\xi;\mathrm{z}_\mathrm{j})\cdot\xi d\vartheta_3\;\partial_\mathrm{t}^4\tilde{u}^{(\mathrm{j})}(\eta,\mathrm{t}_4^*)d\eta
    \\ \displaystyle \quad \quad \quad \quad \quad \quad \quad \quad- \nabla \int_{\mathrm{B}_\mathrm{j}} \int_0^1\mathop{\nabla}\limits_{\eta}\mathcal H^{(0)}(\mathrm{z}_i;\mathrm{z}_\mathrm{j}+\vartheta_3\eta\delta)\cdot\eta d\vartheta_3\;\partial_\mathrm{t}^4\tilde{u}^{(\mathrm{j})}(\eta,\mathrm{t}_2^*)d\eta\\ \displaystyle
    \mathbb F^{(\mathrm{ij})}[\partial_\mathrm{t}^2\widetilde{\nabla u}^{(\mathrm{j})}](\xi,\mathrm{t}_4^*):= \delta^{-1}\nabla\text{div}\int_{\mathrm{B}_\mathrm{j}} \mathcal H^{(0)}(\mathrm{z}_i,\mathrm{z}_\mathrm{j}) \partial_\mathrm{t}^2 \nabla u^{(\mathrm{j})}(\mathrm{y},\mathrm{t}_2^*)d\mathrm{y}
    \\ \displaystyle\nonumber\quad \quad \quad \quad \quad \quad \quad \quad+\nabla\text{div} \int_{\mathrm{B}_\mathrm{j}} \int_0^1\mathop{\nabla}\limits_{\xi}\mathcal H^{(0)}(\mathrm{z}_i+\vartheta_4\delta\xi;\mathrm{z}_\mathrm{j})\cdot\xi d\vartheta_4\;\partial_\mathrm{t}^2\widetilde{\nabla u}^{(\mathrm{j})}(\eta,\mathrm{t}_2^*)d\eta
    \\ \displaystyle\nonumber\quad \quad \quad \quad \quad \quad \quad \quad+ \nabla\text{div} \int_{\mathrm{B}_\mathrm{j}} \int_0^1\mathop{\nabla}\limits_{\eta}\mathcal H^{(0)}(\mathrm{z}_i;\mathrm{z}_\mathrm{j}+\vartheta_4\eta\delta)\cdot\eta d\vartheta_4\;\partial_\mathrm{t}^2\widetilde{\nabla u}^{(\mathrm{j})}(\eta,\mathrm{t}_2^*)d\eta
    \end{cases}
\end{align}
Consequently, we write
\begin{align}
    \widetilde{\nabla u}^{(i)} + \alpha_i \bm{\mathrm{M}}^{(\mathrm{ii})}_{\mathrm{B}_i}\Big[\widetilde{\nabla u}^{(i)}\Big] \nonumber&= \widetilde{\nabla u}^{\textbf{in}} + \alpha_i\delta^2\mathbb A^{(\mathrm{ii})}[\widetilde{\nabla u}^{(i)}](\xi,\mathrm{t}_1^*) + \delta^3\beta_i\mathbb B^{(\mathrm{ii})}[\partial_\mathrm{t}^4\tilde{u}^{(i)}](\xi,\mathrm{t}_3^*) + \beta_i\delta \mathbb V^{(\mathrm{ii})}[\partial_\mathrm{t}^2\tilde{u}^{(i)}](\xi,\mathrm{t}) \\ \nonumber &+ \delta^4\sum_{\mathrm{j}\ne i}^\mathrm{M}\alpha_\mathrm{j}\mathbb C^{(\mathrm{ij})}[\widetilde{\nabla u}^{(\mathrm{j})}](\xi,\mathrm{t})
    + \delta^4\sum_{\mathrm{j}\ne i}^\mathrm{M}\beta_\mathrm{j}\mathbb D^{(\mathrm{ij})}[\partial_\mathrm{t}^2\tilde{u}^{(\mathrm{j})}](\xi,\mathrm{t}) + \delta^4\sum_{\mathrm{j}\ne i}^\mathrm{M}\beta_\mathrm{j}\mathbb E^{(\mathrm{ij})}[\partial_\mathrm{t}^4\tilde{u}^{(\mathrm{j})}](\xi,\mathrm{t}_4^*) \\  &+ \delta^4 \sum_{\mathrm{j}\ne i}^\mathrm{M}\alpha_\mathrm{j}\mathbb F^{(\mathrm{ij})}[\partial_\mathrm{t}^2\widetilde{\nabla u}^{(\mathrm{j})}](\xi,\mathrm{t}_2^*)
\end{align}
We also know that the Magnetization operator is self-adjoint and bounded, which satisfies the followings
\begin{align}
    \mathbb{M}^{(0)}\Big|_{\mathbb{H}_{0}(\text{div}\;0,\mathrm{B})} = 0, \quad \text{and} \quad \mathbb{M}^{(0)}\Big|_{\mathbb{H}_{0}(\textbf{curl}\; 0,\mathrm{B})} = \mathrm{I}.
\end{align}
From the decomposition (\ref{decompositionmulti}), we define $\overset{1}{\mathbb{P}}, \overset{2}{\mathbb{P}}$ and $\overset{3}{\mathbb{P}}$ to be the natural projectors as follows
\begin{align}
\overset{1}{\mathbb{P}}:= \mathbb{L}^2 \to \mathbb{H}_{0}(\text{div}\; 0,\mathrm{B}), \;   \overset{2}{\mathbb{P}}:= \mathbb{L}^2 \to \mathbb{H}_{0}(\textbf{curl}\;0,\mathrm{B}), \; \text{and}\; \overset{3}{\mathbb{P}}:= \mathbb{L}^2 \to \nabla \mathbb{H}_{\textbf{arm}}. 
\end{align}
Therefore, with these properties, it is natural to see that
\begin{align}\nonumber
    \big\langle \widetilde{\nabla u}^{(i)};\mathrm{e}^{(1)}_\mathrm{n}\big\rangle = 0, \big\langle \bm{\mathrm{M}}^{(\mathrm{ii})}_{\mathrm{B}_i}\Big[\widetilde{\nabla u}^{(i)}\Big];\mathrm{e}^{(1)}_\mathrm{n}\big\rangle = 0, \big\langle \widetilde{\nabla u}^{\textbf{in}};\mathrm{e}^{(1)}_\mathrm{n}\big\rangle = 0, \big\langle \mathbb A^{(\mathrm{ii})}[\widetilde{\nabla u}^{(i)}];\mathrm{e}^{(1)}_\mathrm{n}\big\rangle = 0, \big\langle \mathbb B^{(\mathrm{ii})}[\partial_\mathrm{t}^4\tilde{u}^{(i)}];\mathrm{e}^{(1)}_\mathrm{n}\big\rangle = 0.
\end{align}
Then, using the similar techniques as in (\cite{AM2}), we deduce
\begin{align}
    &\nonumber\big\Vert \widetilde{\nabla u}^{(i)}\big\Vert^2_{\mathbb{L}^2(\mathrm{B}_i)} \\ \nonumber&= \sum_{r=1}^3 \big\Vert \overset{r}{\mathbb{P}}(\widetilde{\nabla u}^{(i)})\big\Vert^2_{\mathbb{L}^2(\mathrm{B}_i)}
    \\ \nonumber &\lesssim \frac{1}{|1+\alpha_i\lambda^{(3)}_\mathrm{n}|^2}\big\Vert \widetilde{\nabla u}^{\textbf{in}}\big\Vert^2_{\mathbb{L}^2(\mathrm{B}_i)} + \frac{\alpha_i^2}{|1+\alpha_i\lambda^{(3)}_\mathrm{n}|^2}\delta^4\big\Vert \mathbb A^{(\mathrm{ii})}[\widetilde{\nabla u}^{(i)}]\big\Vert^2_{\mathbb{L}^2(\mathrm{B}_i)} + \frac{\beta_i^2}{|1+\alpha_i\lambda^{(3)}_\mathrm{n}|^2}\delta^6\big\Vert\mathbb B^{(\mathrm{ii})}[\partial_\mathrm{t}^4\tilde{u}^{(i)}]\big\Vert^2_{\mathbb{L}^2(\mathrm{B}_i)} 
    \\ \nonumber &+ \frac{\beta_i^2}{|1+\alpha_i\lambda^{(3)}_\mathrm{n}|^2}\delta^2\big\Vert\mathbb V^{(\mathrm{ii})}[\partial_\mathrm{t}^2\tilde{u}^{(i)}]\big\Vert^2_{\mathbb{L}^2(\mathrm{B}_i)} + \delta^8\sum_{\mathrm{j}\ne i}^\mathrm{M}\frac{\alpha_\mathrm{j}^2}{|1+\alpha_i\lambda^{(3)}_\mathrm{n}|^2}\big\Vert\mathbb C^{(\mathrm{ij})}[\widetilde{\nabla u}^{(\mathrm{j})}]\big\Vert^2_{\mathbb{L}^2(\mathrm{B}_i)}
    \\  &\nonumber+ \delta^8\sum_{\mathrm{j}\ne i}^\mathrm{M}\frac{\beta_\mathrm{j}^2}{|1+\alpha_i\lambda^{(3)}_\mathrm{n}|^2}\big\Vert\mathbb D^{(\mathrm{ij})}[\partial_\mathrm{t}^2\tilde{u}^{(\mathrm{j})}]\big\Vert^2_{\mathbb{L}^2(\mathrm{B}_i)}
    + \delta^8\sum_{\mathrm{j}\ne i}^\mathrm{M}\frac{\beta_\mathrm{j}^2}{|1+\alpha_i\lambda^{(3)}_\mathrm{n}|^2}\big\Vert\mathbb E^{(\mathrm{ij})}[\partial_\mathrm{t}^4\tilde{u}^{(\mathrm{j})}]\big\Vert^2_{\mathbb{L}^2(\mathrm{B}_i)} 
    \\  &+ \delta^8\sum_{\mathrm{j}\ne i}^\mathrm{M}\frac{\alpha_\mathrm{j}^2}{|1+\alpha_i\lambda^{(3)}_\mathrm{n}|^2}\big\Vert\mathbb F^{(\mathrm{ij})}[\partial_\mathrm{t}^2\tilde{\nabla u}^{(\mathrm{j})}]\big\Vert^2_{\mathbb{L}^2(\mathrm{B}_i)}
\end{align}
We then use the continuity of the operators $A^{(\mathrm{ii})}$, $\mathbb B^{(\mathrm{ii})}$ and $\mathbb V^{(\mathrm{ii})}$ to deduce
\begin{align}\label{multiesti}
    \big\Vert \widetilde{\nabla u}^{(i)}\big\Vert^2_{\mathbb{L}^2(\mathrm{B}_i)} 
    &\lesssim \nonumber\frac{1}{|1+\alpha_i\lambda^{(3)}_\mathrm{n}|^2}\big\Vert \widetilde{\nabla u}^{\textbf{in}}\big\Vert^2_{\mathbb{L}^2(\mathrm{B}_i)} + \frac{\alpha_i^2}{|1+\alpha_i\lambda^{(3)}_\mathrm{n}|^2}\delta^4\big\Vert \widetilde{\nabla u}^{(i)}\big\Vert^2_{\mathbb{L}^2(\mathrm{B}_i)} + \frac{\beta_i^2}{|1+\alpha_i\lambda^{(3)}_\mathrm{n}|^2}\delta^6\big\Vert\partial_\mathrm{t}^4\tilde{u}^{(i)}\big\Vert^2_{\mathbb{L}^2(\mathrm{B}_i)} 
    \\ \nonumber &+ \frac{\beta_i^2}{|1+\alpha_i\lambda^{(3)}_\mathrm{n}|^2}\delta^2\big\Vert\mathbb \partial_\mathrm{t}^2\tilde{u}^{(i)}\big\Vert^2_{\mathbb{L}^2(\mathrm{B}_i)} 
    + \delta^8\sum_{\mathrm{j}\ne i}^\mathrm{M}\frac{\alpha_\mathrm{j}^2}{|1+\alpha_i\lambda^{(3)}_\mathrm{n}|^2}\big\Vert\mathbb C^{(\mathrm{ij})}[\widetilde{\nabla u}^{(\mathrm{j})}]\big\Vert^2_{\mathbb{L}^2(\mathrm{B}_i)} 
    \\ \nonumber &+ \delta^8\sum_{\mathrm{j}\ne i}^\mathrm{M}\frac{\beta_\mathrm{j}^2}{|1+\alpha_i\lambda^{(3)}_\mathrm{n}|^2}\big\Vert\mathbb D^{(\mathrm{ij})}[\partial_\mathrm{t}^2\tilde{u}^{(\mathrm{j})}]\big\Vert^2_{\mathbb{L}^2(\mathrm{B}_i)}
    + \delta^8\sum_{\mathrm{j}\ne i}^\mathrm{M}\frac{\beta_\mathrm{j}^2}{|1+\alpha_i\lambda^{(3)}_\mathrm{n}|^2}\big\Vert\mathbb E^{(\mathrm{ij})}[\partial_\mathrm{t}^4\tilde{u}^{(\mathrm{j})}]\big\Vert^2_{\mathbb{L}^2(\mathrm{B}_i)}  
    \\ &+ \delta^8\sum_{\mathrm{j}\ne i}^\mathrm{M}\frac{\alpha_\mathrm{j}^2}{|1+\alpha_i\lambda^{(3)}_\mathrm{n}|^2}\big\Vert\mathbb F^{(\mathrm{ij})}[\partial_\mathrm{t}^2\tilde{\nabla u}^{(\mathrm{j})}]\big\Vert^2_{\mathbb{L}^2(\mathrm{B}_i)}
\end{align}
Next step is to estimate $\big\Vert\mathbb C^{(\mathrm{ij})}[\widetilde{\nabla u}^{(\mathrm{j})}]\big\Vert^2_{\mathbb{L}^2(\mathrm{B}_i)}$. In order to do so, we have for $\mathrm{x}\in \Omega_i$
\begin{align}
\nonumber
    \Big| \mathbb C^{(\mathrm{ij})}[\widetilde{\nabla u}^{(\mathrm{j})}] \Big| &= \Bigg| \delta^{-1}\int_{\mathrm{B}_\mathrm{j}}\nabla\text{div}\ \mathcal{G}^{(0)}(\mathrm{z}_i,\mathrm{z}_\mathrm{j}) \widetilde{\nabla u}^{(\mathrm{j})}(\eta,\mathrm{t})d\eta
    + \int_{\mathrm{B}_\mathrm{j}}\nabla\text{div} \int_0^1\mathop{\nabla}\limits_{\xi}\mathcal{G}^{(0)}\big(\mathrm{z}_i+\vartheta_1\xi\delta;\mathrm{z}_\mathrm{j}\big)\cdot \xi d\vartheta_1\widetilde{\nabla u}^{(\mathrm{j})}(\eta,\mathrm{t})d\eta
    \\ \nonumber\displaystyle &+ \int_{\mathrm{B}_\mathrm{j}}\nabla\text{div} \int_0^1\mathop{\nabla}\limits_{\eta}\mathcal{G}^{(0)}\big(\mathrm{z}_i;\mathrm{z}_\mathrm{j}+\vartheta_1\eta\delta\big)\cdot \eta d\vartheta_1\widetilde{\nabla u}^{(\mathrm{j})}(\eta,\mathrm{t})d\eta \Bigg|
    \\ \nonumber &\lesssim \Bigg|\delta^{-1}\Big\langle \nabla\text{div}\ \mathcal{G}^{(0)}(\mathrm{z}_i,\mathrm{z}_\mathrm{j});\widetilde{\nabla u}^{(\mathrm{j})}(\eta,\mathrm{t})\Big\rangle\Bigg| + \Bigg|\Big\langle \nabla\text{div} \int_0^1\mathop{\nabla}\limits_{\xi}\mathcal{G}^{(0)}\big(\mathrm{z}_i+\vartheta_1\xi\delta;\mathrm{z}_\mathrm{j}\big)\cdot \xi d\vartheta_1;\widetilde{\nabla u}^{(\mathrm{j})}(\eta,\mathrm{t})\Big\rangle\Bigg|
    \\ \nonumber &+ \Bigg|\Big\langle \nabla\text{div} \int_0^1\mathop{\nabla}\limits_{\eta}\mathcal{G}^{(0)}\big(\mathrm{z}_i;\mathrm{z}_\mathrm{j}+\vartheta_1\eta\delta\big)\cdot \eta d\vartheta_1;\widetilde{\nabla u}^{(\mathrm{j})}(\eta,\mathrm{t})\Big\rangle\Bigg|
    \\ \nonumber &\lesssim \delta^{-1} \mathrm{d}_{\mathrm{ij}}^{-3} \big\Vert \widetilde{\nabla u}^{(\mathrm{j})}\big\Vert^2_{\mathbb{L}^2(\mathrm{B}_\mathrm{j})}
\end{align}
Therefore, we have
\begin{align}\label{me1}
    \big\Vert\mathbb C^{(\mathrm{ij})}[\widetilde{\nabla u}^{(\mathrm{j})}]\big\Vert^2_{\mathbb{L}^2(\mathrm{B}_i)} \nonumber&= \int_{\mathrm{B}_i}\Big| \mathbb C^{(\mathrm{ij})}[\widetilde{\nabla u}^{(\mathrm{j})}] \Big|^2 d\xi
    \\ &\lesssim \textbf{Vol}(\mathrm{B}_j)\delta^{-2} \mathrm{d}_{\mathrm{ij}}^{-6} \big\Vert \widetilde{\nabla u}^{(\mathrm{j})}\big\Vert^2_{\mathbb{L}^2(\mathrm{B}_\mathrm{j})}.
\end{align}
In a similar way, we can show that
\begin{align}\label{me2}
    \big\Vert\mathbb F^{(\mathrm{ij})}[\partial_\mathrm{t}^2\widetilde{\nabla u}^{(\mathrm{j})}]\big\Vert^2_{\mathbb{L}^2(\mathrm{B}_i)} \lesssim \delta^{-2} \mathrm{d}_{\mathrm{ij}}^{-2} \big\Vert \partial_\mathrm{t}^2\widetilde{\nabla u}^{(\mathrm{j})}\big\Vert^2_{\mathbb{L}^2(\mathrm{B}_\mathrm{j})}.
\end{align}
Moreover, we next estimate $\big\Vert\mathbb D^{(\mathrm{ij})}[\partial_\mathrm{t}^2\tilde{u}^{(\mathrm{j})}]\big\Vert^2_{\mathbb{L}^2(\mathrm{B}_i)}$. In order to do so, we have for $\mathrm{x}\in \Omega_i$
\begin{align}
    \nonumber
    \Big| \mathbb D^{(\mathrm{ij})}[\partial^2_\mathrm{t}\tilde{ u}^{(\mathrm{j})}] \Big| &= \Bigg| -\delta^{-1} \int_{\mathrm{B}_\mathrm{j}} \nabla\mathcal{G}^{(0)}(\mathrm{z}_i,\mathrm{z}_\mathrm{j})\partial^2_\mathrm{t} \tilde{u}^{(\mathrm{j})}(\eta,\mathrm{t})d\eta
     -  \int_{\mathrm{B}_\mathrm{j}}\nabla\int_0^1\mathop{\nabla}\limits_{\xi}\mathcal{G}^{(0)}\big(\mathrm{z}_i+\vartheta_2\xi\delta;\mathrm{z}_\mathrm{j}\big)\cdot \xi d\vartheta_2 \partial^2_\mathrm{t}\tilde{u}^{(\mathrm{j})}(\eta,\mathrm{t})d\eta
    \\ \displaystyle\nonumber&- \int_{\mathrm{B}_\mathrm{j}}\nabla\int_0^1\mathop{\nabla}\limits_{\eta}\mathcal{G}^{(0)}\big(\mathrm{z}_i;\mathrm{z}_\mathrm{j}+\vartheta_2\eta\delta\big)\cdot \eta d\vartheta_2\partial^2_\mathrm{t} \tilde{u}^{(\mathrm{j})}(\eta,\mathrm{t})d\eta \Bigg|
    \\ \nonumber&\lesssim \Bigg|\delta^{-1}\Big\langle \nabla\mathcal{G}^{(0)}(\mathrm{z}_i,\mathrm{z}_\mathrm{j});\widetilde{\nabla u}^{(\mathrm{j})}(\eta,\mathrm{t})\Big\rangle\Bigg| + \Bigg|\Big\langle \nabla \int_0^1\mathop{\nabla}\limits_{\xi}\mathcal{G}^{(0)}\big(\mathrm{z}_i+\vartheta_1\xi\delta;\mathrm{z}_\mathrm{j}\big)\cdot \xi d\vartheta_1;\partial^2_\mathrm{t}\tilde{ u}^{(\mathrm{j})}(\eta,\mathrm{t})\Big\rangle\Bigg|
    \\ \nonumber &+ \Bigg|\Big\langle \nabla \int_0^1\mathop{\nabla}\limits_{\eta}\mathcal{G}^{(0)}\big(\mathrm{z}_i;\mathrm{z}_\mathrm{j}+\vartheta_1\eta\delta\big)\cdot \eta d\vartheta_1;\partial^2_\mathrm{t}\tilde{u}^{(\mathrm{j})}(\eta,\mathrm{t})\Big\rangle\Bigg|
    \\ \nonumber &\lesssim \delta^{-1} \mathrm{d}_{\mathrm{ij}}^{-2} \big\Vert \partial_\mathrm{t}^2\tilde{u}^{(\mathrm{j})}\big\Vert_{\mathbb{L}^2(\mathrm{B}_\mathrm{j})}
\end{align}
Therefore, we have
\begin{align}\label{me3}
    \big\Vert\mathbb D^{(\mathrm{ij})}[\partial_\mathrm{t}^2\tilde{ u}^{(\mathrm{j})}]\big\Vert^2_{\mathbb{L}^2(\mathrm{B}_i)} \nonumber&= \int_{\mathrm{B}_i}\Big| \mathbb D^{(\mathrm{ij})}[\partial_\mathrm{t}^2\tilde{u}^{(\mathrm{j})}] \Big|^2 d\xi
    \\ &\lesssim \delta^{-2} \mathrm{d}_{\mathrm{ij}}^{-4} \big\Vert \partial_\mathrm{t}^2\tilde{ u}^{(\mathrm{j})}\big\Vert^2_{\mathbb{L}^2(\mathrm{B}_\mathrm{j})}.
\end{align}
In a similar way, we can show that
\begin{align}\label{me4}
    \big\Vert\mathbb E^{(\mathrm{ij})}[\partial_\mathrm{t}^4\tilde{u}^{(\mathrm{j})}]\big\Vert^2_{\mathbb{L}^2(\mathrm{B}_i)} \lesssim \delta^{-2} \mathrm{d}_{\mathrm{ij}}^{-2} \big\Vert \partial_\mathrm{t}^4\tilde{u}^{(\mathrm{j})}\big\Vert^2_{\mathbb{L}^2(\mathrm{B}_\mathrm{j})}.
\end{align}
Consequently, inserting (\ref{me1}), (\ref{me2}), (\ref{me3}), and (\ref{me4}) in (\ref{multiesti}), we obtain
\begin{align}
    \big\Vert \widetilde{\nabla u}^{(i)}\big\Vert^2_{\mathbb{L}^2(\mathrm{B}_i)} 
     &\lesssim\nonumber \frac{1}{|1+\alpha_i\lambda^{(3)}_\mathrm{n}|^2}\big\Vert \widetilde{\nabla u}^{\textbf{in}}\big\Vert^2_{\mathbb{L}^2(\mathrm{B}_i)} + \frac{\alpha_i^2}{|1+\alpha_i\lambda^{(3)}_\mathrm{n}|^2}\delta^4\big\Vert \widetilde{\nabla u}^{(i)}\big\Vert^2_{\mathbb{L}^2(\mathrm{B}_i)} + \frac{\beta_i^2}{|1+\alpha_i\lambda^{(3)}_\mathrm{n}|^2}\delta^6\big\Vert\partial_\mathrm{t}^4\tilde{u}^{(i)}\big\Vert^2_{\mathbb{L}^2(\mathrm{B}_i)} 
    \\ \nonumber &+ \frac{\beta_i^2}{|1+\alpha_i\lambda^{(3)}_\mathrm{n}|^2}\delta^2\big\Vert\mathbb \partial_\mathrm{t}^2\tilde{u}^{(i)}\big\Vert^2_{\mathbb{L}^2(\mathrm{B}_i)} 
    + \delta^6\textbf{Vol}(\mathrm{B}_j)\sum_{\mathrm{j}\ne i}^\mathrm{M}\frac{\alpha_\mathrm{j}^2}{|1+\alpha_i\lambda^{(3)}_\mathrm{n}|^2}\mathrm{d}_{\mathrm{ij}}^{-6} \big\Vert \widetilde{\nabla u}^{(\mathrm{j})}\big\Vert^2_{\mathbb{L}^2(\mathrm{B}_\mathrm{j})} 
    \\ \nonumber &+ \delta^6\sum_{\mathrm{j}\ne i}^\mathrm{M}\frac{\beta_\mathrm{j}^2}{|1+\alpha_i\lambda^{(3)}_\mathrm{n}|^2}\mathrm{d}_{\mathrm{ij}}^{-4} \big\Vert \partial_\mathrm{t}^2\tilde{ u}^{(\mathrm{j})}\big\Vert^2_{\mathbb{L}^2(\mathrm{B}_\mathrm{j})}
    + \delta^6\sum_{\mathrm{j}\ne i}^\mathrm{M}\frac{\beta_\mathrm{j}^2}{|1+\alpha_i\lambda^{(3)}_\mathrm{n}|^2}\big\Vert\mathrm{d}_{\mathrm{ij}}^{-2} \big\Vert \partial_\mathrm{t}^4\tilde{u}^{(\mathrm{j})}\big\Vert^2_{\mathbb{L}^2(\mathrm{B}_\mathrm{j})} 
   \\ &+ \delta^6\sum_{j\ne i}^\mathrm{M}\frac{\alpha_\mathrm{j}^2}{|1+\alpha_i\lambda^{(3)}_\mathrm{n}|^2} \mathrm{d}_{\mathrm{ij}}^{-2} \big\Vert \partial_\mathrm{t}^2\widetilde{\nabla u}^{(\mathrm{j})}\big\Vert^2_{\mathbb{L}^2(\mathrm{B}_\mathrm{j})}.
\end{align}
Then, as $\alpha_i\sim \delta^{-2}$ and $\beta_i\sim \delta^{-2} $ for $i=1,2,\ldots,\mathrm{M}$, and using the a priori knowledge of  $\big\Vert \partial^n_\mathrm{t}\tilde{ u}^{(i)}(\cdot,\mathrm{t})\big\Vert^2_{\mathbb{L}^2(\mathrm{B}_i)}$ for $n=0,1,\ldots$ we have
\begin{align}
    \big\Vert \widetilde{\nabla u}^{(i)}\big\Vert^2_{\mathbb{L}^2(\mathrm{B}_i)} \lesssim \delta^2 + \delta^6\textbf{Vol}(\mathrm{B}_j)\sum_{\mathrm{j}\ne i}^\mathrm{M}\mathrm{d}_{\mathrm{ij}}^{-6} \big\Vert \widetilde{\nabla u}^{(\mathrm{j})}\big\Vert^2_{\mathbb{L}^2(\mathrm{B}_\mathrm{j})}.
\end{align}
Using the similar argument, we can estimate $\big\Vert \partial^n_\mathrm{t}\widetilde{\nabla u}^{(i)}\big\Vert_{\mathbb{L}^2(\mathrm{B}_i)}$ for $n=1,2,\ldots$ as
\begin{align}
    \big\Vert \partial^n_\mathrm{t}\widetilde{\nabla u}^{(i)}\big\Vert^2_{\mathbb{L}^2(\mathrm{B}_i)} \lesssim \delta^2 + \delta^6\textbf{Vol}(\mathrm{B}_j)\sum_{\mathrm{j}\ne i}^\mathrm{M}\mathrm{d}_{\mathrm{ij}}^{-6} \big\Vert \partial^n_\mathrm{t}\widetilde{\nabla u}^{(\mathrm{j})}\big\Vert^2_{\mathbb{L}^2(\mathrm{B}_\mathrm{j})}.
\end{align}
Then, under the condition 
\begin{align}
    \frac{\rho_\mathrm{m}}{4\pi}\textbf{Vol}(\mathrm{B}_j)\delta^6\max_{\mathrm{n}\in\mathbb N}\max_{1\le i\le\mathrm{M}} \sum_{j\ne i}\frac{\alpha_\mathrm{j}^2}{|1+\alpha_i\lambda^{(3)}_\mathrm{n}|^2} d^{-6}_{ij} <1,
\end{align}
We have
\begin{align}
\big\Vert \partial^n_\mathrm{t}\widetilde{\nabla u}^{(i)}\big\Vert_{\mathbb{L}^2(\mathrm{B}_i)} \lesssim \delta.    
\end{align}
This completes the proof.
\end{proof}
\subsection{Proof of Proposition \ref{p3}}
Using the derived estimate as in Lemma \ref{l1}, we therefore deduce
\begin{align}
    \nonumber\big\Vert \partial_\nu u^{(i)}(\cdot,\mathrm{t})\big\Vert_{\mathrm{H}^{-\frac{1}{2}}(\partial\Omega_i)} 
    \nonumber &\lesssim \big\Vert \partial_\nu\tilde{ u}^{(i)}(\cdot,\mathrm{t})\big\Vert_{\mathrm{H}^{-\frac{1}{2}}(\partial\mathrm{B}_i)} 
   \\ \nonumber &\lesssim \big\Vert \nabla\tilde{ u}^{(i)}(\cdot,\mathrm{t})\big\Vert_{\mathbb{H}(\text{div},\mathrm{B}_i)}
   \\ \nonumber &\lesssim \Big(\big\Vert \nabla\tilde{ u}^{(i)}(\cdot,\mathrm{t})\big\Vert^2_{\mathrm{L}^2(\mathrm{B}_i)} + \big\Vert \Delta\tilde{ u}^{(i)}(\cdot,\mathrm{t})\big\Vert^2_{\mathrm{L}^2(\mathrm{B}_i)}\Big)^\frac{1}{2}
    \\ \nonumber &\lesssim \Big(\delta^2\underbrace{\big\Vert\widetilde{\nabla u}^{(i)}(\cdot,\mathrm{t})\big\Vert^2_{\mathrm{L}^2(\mathrm{B}_i)}}_{\sim \delta^2} + \delta^4\underbrace{\big\Vert \widetilde{\Delta u}^{(i)}(\cdot,\mathrm{t})\big\Vert^2_{\mathrm{L}^2(\mathrm{B}_i)}}_{\sim 1}\Big)^\frac{1}{2} \sim \delta^2.
\end{align}
This completes the proof. \qed
\bigskip

\noindent
\textbf{Acknowledgements.}
\newline
\textbf{Data Availability Statement.} Data sharing is not applicable to this article as no datasets were generated or analysed during the current study.
\bigskip

\noindent
\textbf{Declarations.}
\newline
\textbf{Conflict of interest.} The authors declare that they have no conflict of interest.

\end{document}